\documentclass[12pt]{article}
\usepackage{amsmath,amsthm,amsfonts,amscd,amsxtra,amsopn,amssymb,verbatim,pdfsync}
\usepackage[mathscr]{eucal}
\usepackage{ stmaryrd }
\usepackage{graphicx}
\usepackage{epstopdf}
\usepackage{tikz} 
\usepackage{enumerate}
\usetikzlibrary{arrows,backgrounds,calc} 
\usetikzlibrary{matrix}

\DeclareGraphicsRule{.tif}{png}{.png}{`convert #1 `basename #1 .tif`.png}
\usepackage{psfrag}
\input xy
\xyoption{all}

\newcommand\C{\mathbb C}

\newcommand\F{\mathbb F}

\newcommand\N{\mathbb N}

\newcommand\R{\mathbb R}

\newcommand\Z{\mathbb Z}

\newcommand\cC{\mathcal C}


\newcommand{\ra}{\longrightarrow}

\newcommand{\into}{\hookrightarrow}

\def\doublearrow #1 #2
{\quad\raisebox{.1cm}{$\overset{#1}\ra$}
\hskip -.67cm
\raisebox{-.1cm}{$\underset {#2}\ra$}\quad}

\newcommand\wt{\widetilde}
\newcommand\wh{\widehat}
\renewcommand\t{\tilde}


\newcommand\str{\operatorname{str}}
\newcommand\tr{\operatorname{tr}}

\newcommand\op{\operatorname{op}}



\newcommand{\id}{\operatorname{id}}

\newcommand\p{\partial}


\newcommand\ev{\opn{ev}}
\newcommand\coev{\opn{coev}}
\newcommand\classtr{\opn{cl-tr}}

\newcommand{\one}{I}
\newcommand{\ie}{{i.e.\;}}




\newcommand\Hom{\operatorname{Hom}}







\newcommand\Iff{if and only if }

\newcommand{\nb}{\nobreakdash}

\newcommand\scr{\mathscr}
\renewcommand\={\overset{\text{def}}=}
\newcommand{\opn}[1]{\operatorname{#1}}


\newcommand{\sB}{\text{\sf{B}}}
\newcommand{\sC}{\text{\sf{C}}}
\newcommand{\sD}{\text{\sf{D}}}

\newcommand{\sF}{\text{\sf{F}}}

\newcommand{\Fun}{\text{\sf{Fun}}}



\newcommand{\Set}{\text{\sf{Set}}}


\newcommand\TV{\text{\sf TV}}
\newcommand\Ban{\text{\sf Ban}}
\newcommand{\Vect}{\text{\sf{Vect}}}
\newcommand{\SVect}{\text{\sf{SVect}}}

\newcommand{\RBord} [1]{#1\text{-\hskip -.01in{\sf RBord}}}


\newtheorem{prop}{Proposition}

\newtheorem{thm}[prop]{Theorem}
\newtheorem{cor}[prop]{Corollary}
\newtheorem{lem}[prop]{Lemma}

\newtheorem{ddefn}[prop]{Definition}
\newtheorem{eex}[prop]{Example}
\newtheorem{rrem}[prop]{Remark}
\newtheorem{eexercise}[prop]{Exercise}
\newtheorem{con}[prop]{Conjecture}
\newtheorem{hhome}[prop]{Homework}
\newtheorem{nnumber}[prop]{}

\newenvironment{defn}{\begin{ddefn}\rm}{\end{ddefn}}
\newenvironment{ex}{\begin{eex}\rm}{\end{eex}}
\newenvironment{rem}{\begin{rrem}\rm}{\end{rrem}}

{\catcode`@=11\global\let\c@equation=\c@prop}



\topmargin -0.5in
\oddsidemargin 0in
\evensidemargin 0in
\textheight 9.25in
\textwidth 6.5in
\headheight 0.25in
\headsep 0.25in

\numberwithin{prop}{section}

\title{Traces in monoidal categories}
\author{Stephan Stolz and Peter Teichner}
\begin{document}
\maketitle
\abstract{This paper contains the construction, examples and properties of a trace and a trace pairing for certain morphisms in a monoidal category with switching isomorphisms. Our construction of the categorical trace is a common generalization of the trace for endomorphisms of dualizable objects in a balanced monoidal category and the trace of nuclear operators on a topological vector space with the approximation property. In a forthcoming paper, applications to the partition function of super symmetric field theories will be given.}
\tableofcontents
\section{Introduction}
The results of this paper provide an essential step in our proof that  the partition function of a Euclidean field theory of dimension $2|1$ is an integral modular function \cite{STpartition}. While motivated by field theory, the two main results are the construction of traces and trace pairings for certain morphisms in a monoidal category. 
Let  $\sC$ be a monoidal category with  monoidal unit $\one\in \sC$ \cite{McL}. 
\medskip

\noindent{\bf Question.} What conditions on an endomorphism $f\in \sC(X,X)$ allow us to construct a well-defined trace $\tr(f)\in \sC(\one,\one)$ with the usual properties expected of a trace?

\medskip

Theorem \ref{thm:main1} below provides an answer to this question. Our construction is a common generalization of the following two well-known classical cases:
\begin{enumerate}
\item If $X\in \sC$ is a dualizable object (see Definition \ref{def:dualizable}), then every endomorphism $f$ has a well-defined trace \cite[Proposition 3.1]{JSV}.
\item If $f\colon X\to X$ is a nuclear operator (see Definitions \ref{def:nuclear_Ban} and \ref{def:nuclear_TV}) on a topological vector space $X$, then $f$ has a well-defined trace provided $X$ has the {\em approximation property}, i.e., the identity operator on $X$ can be approximated by finite rank operators in the compact open topology \cite{Li}. 
\end{enumerate}
Let $\TV$ be the category of topological vector spaces (more precisely, these are assumed to be locally convex, complete, Hausdorff), equipped with the monoidal structure given by the projective tensor product (see section \ref{subsec:TV}). Then an object $X\in \TV$ is dualizable \Iff $X$ is finite dimensional, whereas every Hilbert space has the approximation property. Hence extending the trace from endomorphisms of dualizable objects of $\sC$ to more general objects is analogous to extending the notion of trace from endomorphisms of finite dimensional vector spaces to certain infinite dimensional topological vector spaces. In fact, our answer will involve analogues of the notions {\em nuclear} and {\em approximation property} for general monoidal categories which we now describe. 

The following notion is our analogue of a nuclear morphism.

\begin{defn}\label{def:thick} A morphism $f\colon X\to Y$ in a monoidal category $\sC$ is {\em thick} if it can be factored in the form
\begin{equation}\label{eq:factorization}
\xymatrix{
X\cong \one\otimes X\ar[rr]^{t\otimes \id_X}&&Y\otimes Z\otimes X\ar[rr]^{\id_Y\otimes b}&&Y\otimes\one\cong Y
}
\end{equation}
for morphisms $t\colon \one\to Y\otimes Z$, $b\colon Z\otimes X\to \one$.
\end{defn}

As explained in the next section, the terminology is motivated by considering the bordism category.  In the category $\Vect$ of vector spaces, with monoidal structure given by the tensor product, a morphism $f\colon X\to Y$ is thick \Iff it has finite rank (see Theorem \ref{thm:vect}).  In the category $\TV$ a morphism is thick \Iff it is nuclear (see Theorem \ref{thm:TV}).

If $f\colon X\to X$ is a thick endomorphism with a factorization as above, we attempt to define its {\em categorical trace} $\tr(f)\in\sC(\one,\one)$ to be the composition
\begin{equation}\label{eq:trace}
\xymatrix{
\one\ar[r]^<>(.5)t&X\otimes Z\ar[rr]^{s_{X,Z}}&&Z\otimes X\ar[r]^<>(.5)b&\one
}.
\end{equation}
This categorical trace depends on the choice of a natural family of isomorphisms $s=\{s_{X,Y}\colon X\otimes Y\to Y\otimes X\}$ for $X,Y\in \sC$. We don't assume that $s$ satisfies the relations \eqref{eq:relations} required for the braiding isomorphism of a braided monoidal category. Apparently lacking an established name, we will refer to $s$ as {\em switching isomorphisms}. We would like to thank Mike Shulman for this suggestion.

  For the monoidal category $\Vect$, equipped with the standard switching isomorphism $s_{X,Y}\colon X\otimes Y\to Y\otimes X$, $x\otimes y\mapsto y\otimes x$, the categorical trace of a finite rank (i.e. thick) endomorphism $f\colon X\to X$ agrees with its classical trace (see Theorem \ref{thm:vect}). More generally,  if $X$ is a dualizable object of a monoidal category $\sC$,  the above definition agrees with the classical definition of the trace in that situation (Theorem \ref{thm:dualizable}). In general, the above trace is {\em not well-defined}, since it might depend on the factorization of $f$ given by the triple $(Z,b,t)$ rather than just the morphism $f$. As we will see in Section~\ref{subsec:Ban}, this happens for example in the category of Banach spaces.

To understand the problem with defining $\tr(f)$, let us write $\wh\tr(Z,t,b)\in \sC(\one,\one)$ for the composition \eqref{eq:trace}, and $\Psi(Z,t,b)\in \sC(X,Y)$ for the composition \eqref{eq:factorization}. 
There is an equivalence relation on these triples (see Definition \ref{def:thickened}) such that $\wh\tr(Z,t,b)$ and $\Psi(Z,t,b)$ depend only on the equivalence class $[Z,t,b]$. In other words, there are well-defined maps
\[
\wh\tr\colon \wh\sC(X,X)\ra \sC(\one,\one)
\qquad
\Psi\colon \wh\sC(X,Y)\ra \sC(X,Y)
\]
where $\wh\sC(X,Y)$ denotes the  equivalence classes of triples $(Z,t,b)$ for fixed $X,Y\in\sC$. We note that by construction the image of $\Psi$ consists of the thick morphisms from $X$ to $Y$. We will call elements of $\wh\sC(X,Y)$  {\em thickened morphisms}. If $\wh f\in \wh\sC(X,Y)$ with $\Psi(\wh f)=f\in \sC(X,Y)$, we say that $\wh f$ is a {\em thickener} of $f$. 

Using the notation $\sC^{tk}(X,Y)$ for the set of thick morphisms from $X$ to $Y$, it is clear that there is a well-defined trace map $\tr\colon \sC^{tk}(X,X)\to \sC(\one,\one)$ making the diagram 
\begin{equation}\label{eq:trace_diagram}
\xymatrix{
\sC^{tk}(X,X)\ar@{-->}[rr]^{\tr}&&\sC(\one,\one)\\
&\wh\sC(X,X)\ar@{->>}[lu]^{\Psi}\ar[ru]_{\wh\tr}
}
\end{equation}
commutative \Iff $X$ has the following property:

\begin{defn} An object $X$ in a monoidal category $\sC$ with switching isomorphisms has the {\em trace property} if the map $\wh\tr$ is constant on the fibers of $\Psi$.
\end{defn}

For the category $\Ban$ of Banach spaces and continuous maps, we will show in Section~\ref{subsec:Ban} that the map $\Psi$ can be identified with the homomorphism
\begin{equation}
\Phi\colon Y\otimes X'\ra \Ban(X,Y)
\qquad
w\otimes f\mapsto (v\mapsto w f(v))
\end{equation}
where $X'$ is the Banach space of continuous linear maps $f\colon X\to \C$ equipped with the operator norm and $\otimes$ is the projective tensor product. 
Operators in the image of $\Phi$ are referred to as {\em nuclear operators}, and hence a morphism in $\Ban$ is thick \Iff it is nuclear. It is a classical result  that the trace property for a Banach space $X$ is equivalent to the injectivity of the map $\Phi$ which in turn is equivalent to the {\em approximation property} for $X$: the identity operator of $X$ can be approximated by finite rank operators in the compact-open topology, see e.g.\ \cite[\S 43.2(7)]{Ko}. Every Hilbert space has the approximation property, but deciding whether a  Banach space has this property is surprisingly difficult. Grothendieck asked this question in the fifties, but the first example of a Banach space {\em without} the approximation property was found by Enflo only in 1973 \cite{En}. Building on Enflo's work, Szankowski showed in 1981 that the Banach space of bounded operators on an (infinite dimensional) Hilbert space does not have the approximation property \cite{Sz}.

\begin{thm}\label{thm:main1} Let $\sC$ be a monoidal category with switching isomorphisms, i.e. $\sC$ comes equipped with a family of natural isomorphisms $s_{X,Y}\colon X\otimes Y\to Y\otimes X$. If $X\in\sC$ is an object with the trace property, then the above categorical trace $\tr(f)\in \sC(\one, \one)$ is well-defined for any thick endomorphism $f\colon X\to X$. This compares to the two classical situations mentioned above as follows:

\begin{enumerate}[(i)]
\item If $X$ is a dualizable object, then $X$ has the trace property and any endomorphism $f$ of $X$ is thick. Moreover, the categorical trace of $f$ agrees with its classical trace.
\item In the category $\TV$ of  topological vector spaces (locally convex, complete, Hausdorff), a morphism  is thick \Iff it is nuclear, and the approximation property of an object $X\in \TV$ implies the trace property. Moreover, if $f\colon X\to X$ is a nuclear endomorphism of an object with the approximation property, then the categorical trace of $f$ agrees with its classical trace.
\end{enumerate}
\end{thm}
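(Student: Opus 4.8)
The plan is to decompose the theorem into three essentially independent tasks: first, the general well-definedness claim; second, part (i) on dualizable objects; third, part (ii) on topological vector spaces. The first task is almost immediate from the setup: by construction, $\wh\tr$ and $\Psi$ descend to maps on the set $\wh\sC(X,X)$ of thickened morphisms (this is Definition~\ref{def:thickened} and the discussion following it), and the trace property for $X$ says precisely that $\wh\tr$ is constant on the fibers of $\Psi$. Hence the dashed arrow $\tr\colon \sC^{tk}(X,X)\to\sC(\one,\one)$ in diagram~\eqref{eq:trace_diagram} exists, and $\tr(f)=\wh\tr(Z,t,b)$ for any choice of factorization $(Z,t,b)$ of $f$. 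So the content of the theorem is really in (i) and (ii), plus the identification of thick with nuclear/finite rank, which I would invoke from Theorems~\ref{thm:vect}, \ref{thm:TV}, and~\ref{thm:dualizable} rather than reprove.

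For part (i), suppose $X$ is dualizable with dual $X^\vee$, evaluation $\ev\colon X^\vee\otimes X\to\one$ and coevaluation $\coev\colon\one\to X\otimes X^\vee$. First I would show every endomorphism $f\colon X\to X$ is thick: take $Z=X^\vee$, let $t$ be the composite $\one\xrightarrow{\coev}X\otimes X^\vee\xrightarrow{f\otimes\id}X\otimes X^\vee$, and let $b=\ev$; the triangle identities for a dual then show the factorization~\eqref{eq:factorization} recovers $f$. Next, the trace property: since $X$ is dualizable, any thickener of $f$ is equivalent (in the sense of Definition~\ref{def:thickened}) to the canonical one built from $(X^\vee,\ev,\coev)$ — this is the standard fact that $\Psi$ becomes a bijection onto $\sC(X,X)$ when $X$ is dualizable, which I would cite from Theorem~\ref{thm:dualizable}. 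Hence $\wh\tr$ is automatically constant on fibers of $\Psi$, and evaluating~\eqref{eq:trace} on the canonical thickener gives exactly the usual categorical trace $\one\xrightarrow{\coev}X\otimes X^\vee\xrightarrow{f\otimes\id}X\otimes X^\vee\xrightarrow{s}X^\vee\otimes X\xrightarrow{\ev}\one$ of \cite[Proposition 3.1]{JSV}, proving the agreement of traces.

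For part (ii), the identification of thick morphisms in $\TV$ with nuclear ones is Theorem~\ref{thm:TV}, so I would take that as given. The implication "approximation property $\Rightarrow$ trace property" is the heart of the matter and the step I expect to be the main obstacle. Following the Banach-space discussion in the excerpt, I would identify $\Psi\colon\wh\sC(X,X)\to\sC(X,X)$ with a map $\Phi$ out of a suitable completed tensor product $X\,\widehat\otimes\,X'$ into $\TV(X,X)$, reduce the trace property to injectivity of $\Phi$, and then invoke the classical equivalence between injectivity of this canonical map and the approximation property (as in \cite[\S43.2]{Ko}); one must be careful that the locally convex, complete, Hausdorff hypotheses make the projective-tensor-product bookkeeping go through and that the equivalence relation on triples matches the kernel of $\Phi$. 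Finally, for a nuclear $f$ on an object with the approximation property, writing $f=\sum_n \lambda_n\, \xi_n(\,\cdot\,)\,x_n$ with $\sum|\lambda_n|<\infty$, the categorical trace computed from the corresponding thickener is $\sum_n\lambda_n\,\xi_n(x_n)$, which is the classical trace; well-definedness of this sum and its independence of the nuclear representation is exactly what the trace property guarantees. I would isolate the $\TV$ tensor-product lemmas into section~\ref{subsec:TV} and keep the argument here at the level of "reduce to injectivity of $\Phi$, then cite."
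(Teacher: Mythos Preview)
Your treatment of the well-definedness claim and of part~(i) matches the paper's: these are essentially citations of the diagram~\eqref{eq:trace_diagram} and of Theorem~\ref{thm:dualizable}, and your sketch of the dualizable case (canonical thickener via $\coev$, bijectivity of $\Psi$) is exactly how the paper argues.

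Part~(ii), however, has a genuine gap. You propose to identify $\Psi\colon\wh\TV(X,X)\to\TV(X,X)$ with a map $\Phi$ out of a completed tensor product $X\,\widehat\otimes\,X'$ and then reduce the trace property to injectivity of $\Phi$. That identification comes from Lemma~\ref{lem:Psi_Phi}, which requires $X$ to be \emph{semi-dualizable}. But the paper explicitly notes (in the paragraph following Definition~\ref{def:semi-dualizable}) that $\TV$ with the projective tensor product is \emph{not} closed monoidal, and only \emph{some} objects of $\TV$ (e.g.\ Banach spaces) are semi-dualizable. So for a general $X\in\TV$ you cannot realize $\wh\TV(X,X)$ as $X\,\widehat\otimes\,X'$, and the Banach-space route via K\"othe's injectivity criterion does not go through.

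The paper's argument for~(ii) is genuinely different and avoids semi-dualizability altogether. The key step is Lemma~\ref{lem:continuity}: for any fixed $\wh f\in\wh\TV(X,X)$, the map $g\mapsto\wh\tr(\wh f\circ g)$ is continuous on $\TV(X,X)$ in the compact-open topology. Given the approximation property, pick a net $I_\nu$ of finite-rank operators converging to $\id_X$; then
\[
\wh\tr(\wh f)=\wh\tr(\wh f\circ\id_X)=\lim_\nu\wh\tr(\wh f\circ I_\nu)=\lim_\nu\wh\tr(f\circ\wh I_\nu),
\]
where $\wh I_\nu$ is any thickener of the finite-rank (hence thick) map $I_\nu$, and the last equality is Lemma~\ref{lem:hat_composition2}. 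The right-hand side depends only on $f=\Psi(\wh f)$, which is the trace property. The same continuity argument, combined with $\tr=\classtr$ for finite-rank operators (Theorem~\ref{thm:vect}), yields the agreement with the classical trace. Your nuclear-representation formula $\sum\lambda_n\xi_n(x_n)$ is morally the right answer, but establishing that it computes $\wh\tr(\wh f)$ independently of the representation is precisely the content one needs the continuity lemma for.
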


The first part sums up our discussion above. Statements (i) and (ii) appear as Theorem \ref{thm:dualizable} respectively Theorem \ref{thm:TV} below. 
It would be interesting to find an object in $\TV$ which has the trace property but not the approximation property.
 
To motivate our second main result, Theorem \ref{thm:main2}, we note that a monoidal functor $F\colon \sC\to \sD$ preserves thick and thickened morphisms and gives commutative diagrams for the map $\Psi$  from \eqref{eq:trace_diagram}. If $F$ is compatible with the switching isomorphisms then it also commutes with $\wh\tr$.  
However, the {\em trace property} is {\em not} functorial in the sense that if some object $X\in \sC$ has the trace property then  it is {\em not} necessarily inherited by $F(X)$ (unless $F$ is essentially surjective and full or has some other special property). In particular, when the functor $F$ is a field theory, then, as explained in the next section, this non-functoriality causes a problem for calculating the partition function of $F$.

We circumvent this problem by replacing the {\em trace} by a closely related {\em trace pairing}
\begin{equation}\label{eq:trace_pairing}
\tr\colon \sC^{tk}(X,Y)\times \sC^{tk}(Y,X)\ra \sC(\one,\one)
\end{equation}
 for objects $X$, $Y$ of a monoidal category $\sC$ with switching isomorphisms. Unlike the trace map 
$
\tr\colon \sC^{tk}(X,X)\ra \sC(\one,\one)
$
 discussed above, which is only defined if $X$ has the trace property, {\em no condition on $X$ or $Y$} is needed to define this trace pairing $\tr(f,g)$ as follows. Let $\wh f\in \wh\sC(X,Y)$, $\wh g\in \wh \sC(Y,X)$ be thickeners of $f$ respectively $g$ (i.e.,  $\Psi(\wh f)=f$ and $\Psi(\wh g)=g$). We will show that elements of $\wh \sC(X,Y)$ can be pre-composed or post-composed with ordinary morphisms in $\sC$ (see Lemma \ref{lem:hat_composition1}). This composition gives elements $\wh f\circ g$ and $f\circ \wh g$ in $\wh \sC(Y,Y)$ which we will show to be equal in Lemma \ref{lem:hat_composition2}. Hence the trace pairing defined by
\[
\tr(f,g):=\wh\tr(\wh f\circ g)=\wh\tr(f\circ \wh g)\in \sC(\one,\one)
\]
is {\em independent} of the choice of $\wh f$ and $\wh g$. 
We note that $\Psi(\wh f\circ g)=\Psi(f\circ \wh g)=f\circ g\in\sC^{tk}(Y,Y)$ and hence if 
$Y$ has the trace property, then 
\begin{equation}\label{eq:trace-pairing_trace}
\tr(f, g)=\tr(f\circ g)
\qquad\text{for $f\in \sC^{tk}(X,Y)$, $g\in \sC^{tk}(Y,X)$}.
\end{equation}
In other words, the trace pairing $\tr(f,g)$ is a generalization of the categorical trace of $f\circ g$, defined in situations where this trace might not be well-defined.

The trace pairing has the following properties that are analogous to properties one expects to hold for a trace. We note that the relationship \eqref{eq:trace-pairing_trace} immediately implies these properties for our trace defined for thick endomorphism of objects satisfying the trace property. 

\begin{thm}\label{thm:main2} Let $\sC$ be a monoidal category with switching isomorphisms. Then the trace pairing \eqref{eq:trace_pairing} is functorial and has the following properties:
\begin{enumerate}
\item $\tr(f,g)=\tr(g,f)$ for thick morphisms $f\in \sC^{tk}(X,Y)$, $g\in \sC^{tk}(Y,X)$. If $Y$ has the trace property then $\tr(f, g)=\tr(f\circ g)$ and symmetrically for $X$.
\item If $\sC$ is an additive category with distributive monoidal structure (see Definition \ref{def:distributive}), then the trace pairing is a bilinear map.
\item  $\tr(f_1\otimes f_2,g_1\otimes g_2)=\tr(f_1,g_1)\tr(f_2,g_2)$ 
for $f_i\in \sC^{tk}(X_i,Y_i)$, $g_i\in \sC^{tk}(Y_i,X_i)$, provided $s$ gives $\sC$ the structure of a {\em symmetric} monoidal category. More generally, this property holds if $\sC$ is a {\em balanced monoidal category}.
\end{enumerate}
\end{thm}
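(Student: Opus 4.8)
The plan is to reduce each assertion to an explicit manipulation of thickeners, using the composition Lemmas~\ref{lem:hat_composition1} and~\ref{lem:hat_composition2}, naturality of the switching isomorphisms $s$, and --- only for the last item --- the hexagon and symmetry (resp.\ balancing) axioms. Fix a representative $\wh f=[Z,t,b]$ of a thickener of $f\in\sC^{tk}(X,Y)$ and $\wh g=[W,t',b']$ of a thickener of $g\in\sC^{tk}(Y,X)$. The key preliminary observation is a \emph{cyclicity} property of $\wh\tr$: for $\wh h=[Z_0,t_0,b_0]\in\wh\sC(P,Q)$ and an ordinary morphism $k\colon Q\to P$, the thickened endomorphisms $\wh h\circ k\in\wh\sC(Q,Q)$ and $k\circ\wh h\in\wh\sC(P,P)$ of Lemma~\ref{lem:hat_composition1} have equal $\wh\tr$, because unravelling the definition and applying naturality of $s$ to $k$ shows that both equal $b_0\circ s_{P,Z_0}\circ(k\otimes\id_{Z_0})\circ t_0$. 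Granting this, functoriality is immediate: a monoidal functor $F\colon\sC\to\sD$ compatible with the switching isomorphisms sends $\wh f$ to a thickener of $F(f)$ (since $F$ commutes with $\Psi$), commutes with pre- and post-composition of thickeners by ordinary morphisms (by the formulas of Lemma~\ref{lem:hat_composition1}) and with $\wh\tr$, so $F(\tr(f,g))=F(\wh\tr(\wh f\circ g))=\wh\tr(F(\wh f)\circ F(g))=\tr(F(f),F(g))$.

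For Property~(1), cyclicity gives $\tr(f,g)=\wh\tr(\wh f\circ g)=\wh\tr(g\circ\wh f)$, and Lemma~\ref{lem:hat_composition2} (with the roles of $f$ and $g$ exchanged) gives $g\circ\wh f=\wh g\circ f$ in $\wh\sC(X,X)$, whence $\wh\tr(g\circ\wh f)=\wh\tr(\wh g\circ f)=\tr(g,f)$; so $\tr(f,g)=\tr(g,f)$. That $\tr(f,g)=\tr(f\circ g)$ when $Y$ has the trace property is exactly~\eqref{eq:trace-pairing_trace}, and the symmetric statement for $X$ then follows. For Property~(2), suppose $\sC$ is additive with distributive monoidal structure (Definition~\ref{def:distributive}); then thick morphisms are closed under addition (combine two thickeners via the biproduct, using distributivity), so $\tr$ is defined on a product of abelian groups. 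Writing $\wh f\circ g=[Z,t,b\circ(\id_Z\otimes g)]$ and $f\circ\wh g=[W,(f\otimes\id_W)\circ t',b']$ and using $\tr(f,g)=\wh\tr(\wh f\circ g)=\wh\tr(f\circ\wh g)$, additivity of $\tr$ in the second variable follows from $\id_Z\otimes(g_1+g_2)=\id_Z\otimes g_1+\id_Z\otimes g_2$ (distributivity) together with the fact that $\wh\tr[Z_0,t_0,b_0]=b_0\circ s\circ t_0$ is additive in $b_0$; additivity in the first variable is the mirror statement using $(f_1+f_2)\otimes\id_W=f_1\otimes\id_W+f_2\otimes\id_W$ and additivity of $\wh\tr$ in $t_0$. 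Biadditivity of composition in an additive category supplies the needed distributivity of $\circ$ over $+$.

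Property~(3) is the substantial one. Assume first that $s$ makes $\sC$ symmetric monoidal. Given thickeners $\wh f_i=[Z_i,t_i,b_i]$ of $f_i\colon X_i\to Y_i$, assemble a thickened morphism $\wh f_1\boxtimes\wh f_2:=[Z_1\otimes Z_2,\,T,\,B]\in\wh\sC(X_1\otimes X_2,Y_1\otimes Y_2)$, where $T$ is $t_1\otimes t_2\colon\one\cong\one\otimes\one\to(Y_1\otimes Z_1)\otimes(Y_2\otimes Z_2)$ followed by the coherence isomorphism $(Y_1\otimes Z_1)\otimes(Y_2\otimes Z_2)\cong(Y_1\otimes Y_2)\otimes(Z_1\otimes Z_2)$ built from associators and $s$, and dually $B$ is the coherence isomorphism $(Z_1\otimes Z_2)\otimes(X_1\otimes X_2)\cong(Z_1\otimes X_1)\otimes(Z_2\otimes X_2)$ followed by $b_1\otimes b_2$. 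Three things must be checked, each a diagram chase with Mac Lane coherence and naturality of $s$: (a) $\Psi(\wh f_1\boxtimes\wh f_2)=f_1\otimes f_2$, so this is genuinely a thickener of $f_1\otimes f_2$; (b) the interchange law $(\wh f_1\boxtimes\wh f_2)\circ(g_1\otimes g_2)=(\wh f_1\circ g_1)\boxtimes(\wh f_2\circ g_2)$ in $\wh\sC(Y_1\otimes Y_2,Y_1\otimes Y_2)$; and (c) for endomorphism thickeners $\wh a\in\wh\sC(A,A)$, $\wh b\in\wh\sC(B,B)$, the identity $\wh\tr(\wh a\boxtimes\wh b)=\wh\tr(\wh a)\cdot\wh\tr(\wh b)$ in $\sC(\one,\one)$, where $\cdot$ denotes composition, equivalently $\otimes$ via $\one\otimes\one\cong\one$. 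Granting (a)--(c), $\tr(f_1\otimes f_2,g_1\otimes g_2)=\wh\tr\bigl((\wh f_1\boxtimes\wh f_2)\circ(g_1\otimes g_2)\bigr)=\wh\tr\bigl((\wh f_1\circ g_1)\boxtimes(\wh f_2\circ g_2)\bigr)=\wh\tr(\wh f_1\circ g_1)\cdot\wh\tr(\wh f_2\circ g_2)=\tr(f_1,g_1)\,\tr(f_2,g_2)$. For a balanced $\sC$ one runs the same argument, inserting the twist isomorphisms at the places where symmetry is used and invoking the balancing axiom $\theta_{X\otimes Y}=s_{Y,X}\circ s_{X,Y}\circ(\theta_X\otimes\theta_Y)$ together with $\theta_\one=\id$.

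I expect step (c) to be the main obstacle. Expanding $\wh\tr(\wh a\boxtimes\wh b)=B_{ab}\circ s_{A\otimes B,\,Z_a\otimes Z_b}\circ T_{ab}$ and rewriting $s_{A\otimes B,\,Z_a\otimes Z_b}$ through the two hexagon identities as a composite of the four braidings $s_{A,Z_a}$, $s_{A,Z_b}$, $s_{B,Z_a}$, $s_{B,Z_b}$ and associators, one finds that the two ``cross'' braidings $s_{A,Z_b}$ and $s_{B,Z_a}$ each appear next to a second occurrence produced by the coherence shuffles built into $T_{ab}$ and $B_{ab}$; symmetry ($s_{Y,X}\circ s_{X,Y}=\id$) cancels these, and after reorganising associators and unit isomorphisms what survives is $(b_a\otimes b_b)\circ(s_{A,Z_a}\otimes s_{B,Z_b})\circ(t_a\otimes t_b)$, i.e.\ $\wh\tr(\wh a)\cdot\wh\tr(\wh b)$. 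In string-diagram language, after cancelling the two superfluous crossings the closed diagram computing $\wh\tr(\wh a\boxtimes\wh b)$ separates, by an isotopy, into two disjoint loops computing $\wh\tr(\wh a)$ and $\wh\tr(\wh b)$. The genuine work lies in keeping the coherence isomorphisms bookkept correctly throughout this chase; steps (a) and (b), and the analogous balanced computation, are routine variants of the same bookkeeping.
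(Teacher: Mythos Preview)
Your overall architecture matches the paper's: each part is reduced to a property of $\wh\tr$ (the paper's Theorem~\ref{thm:whtr}) via Lemmas~\ref{lem:hat_composition1} and~\ref{lem:hat_composition2}. Your cyclicity observation is exactly part~(1) of Theorem~\ref{thm:whtr}, and your deduction of part~(1) from it coincides with the paper's. For part~(2) you take a more economical route than the paper. The paper equips $\wh\sC(X,Y)$ with an abelian group structure (Definition~\ref{def:wh_addition}), proves that $\Psi$, $\wh\tr$, and the composition of thickened with ordinary morphisms are additive, and deduces bilinearity of the pairing from that package. You instead fix a thickener of one argument and compute directly: $\tr(f,g)=\wh\tr[Z,t,b\circ(\id_Z\otimes g)]=b\circ(\id_Z\otimes g)\circ s_{X,Z}\circ t$, so linearity in $g$ follows from additivity of the functor $\id_Z\otimes(-)$ (a consequence of distributivity) and bilinearity of composition; dually for $f$ via $\tr(f,g)=\wh\tr(f\circ\wh g)$. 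This is correct and bypasses the addition on $\wh\sC$. For part~(3) in the symmetric case your steps~(a),~(b),~(c) are the paper's Lemma~\ref{lem:Psi_multiplicative}, the (implicit) interchange law, and part~(3) of Theorem~\ref{thm:whtr}; your appeal to Mac~Lane coherence for symmetric monoidal categories---tracking that the composite of the two shuffles and $s_{A\otimes B,Z_a\otimes Z_b}$ induces the permutation $(A\,Z_a)(B\,Z_b)$, hence equals $s_{A,Z_a}\otimes s_{B,Z_b}$---is a clean substitute for the paper's explicit string-diagram manipulation.

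The genuine gap is the balanced case of part~(3). First, the relation you quote, $\theta_{X\otimes Y}=s_{Y,X}\circ s_{X,Y}\circ(\theta_X\otimes\theta_Y)$, is incorrect: the balancing axiom is $\theta_{X\otimes Y}=c_{Y,X}\circ(\theta_Y\otimes\theta_X)\circ c_{X,Y}$ (Equation~\eqref{eq:twist}), and since $s_{X,Y}=(\id_Y\otimes\theta_X)\circ c_{X,Y}$ already contains a twist, these do not agree. Second, ``insert the twist isomorphisms at the places where symmetry was used'' is not a proof. Your symmetric argument relies on the coherence theorem to cancel the cross-braidings; in the balanced case $c_{Y,X}\circ c_{X,Y}\ne\id$, so that cancellation fails, and there is no analogous one-line coherence shortcut. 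The paper handles this by making a specific choice of over- versus under-crossing in Definition~\ref{def:wh_tensor} (namely $c_{Z_1,Y_2}$ in $t$ and $c_{Z_2,X_1}^{-1}$ in $b$) and then carrying out a six-step string-diagram computation that unfolds $\theta_{X_1\otimes X_2}$ via~\eqref{eq:twist} and repeatedly invokes Proposition~\ref{prop:isotopy} and Lemma~\ref{lem:over/under_crossing}. Your sketch supplies neither the correct definition of $\boxtimes$ in the braided setting nor this computation.
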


We recall that a balanced monoidal category is a braided monoidal category equipped with a natural family of isomorphisms $\theta=\{\theta_X\colon X\to X\}$ called {\em twists} satisfying a compatibility condition (see Definition \ref{def:balanced}). Symmetric monoidal categories are balanced monoidal categories with $\theta\equiv\id$. For a balanced monoidal category $\sC$ with braiding isomorphism $c_{X,Y}\colon X\otimes Y\to Y\otimes X$ and twist $\theta_X\colon X\to X$, one defines the {\em switching isomorphism} $s_{X,Y}\colon X\otimes Y\to Y\otimes X$ by 
$s_{X,Y}:=(\id_Y\otimes\theta_X)\circ c_{X,Y}$.

There are interesting examples of balanced monoidal categories that are not symmetric monoidal, e.g., categories of bimodules over a fixed von Neumann algebra (monoidal structure given by Connes fusion) or categories of modules over quantum groups. Traces in the latter are used to produce polynomial invariants for knots. Originally, we only proved the multiplicative property of our trace pairing for symmetric monoidal categories.
We are grateful to Gregor Masbaum for pointing out to us the classical definition of the trace of an endomorphism of a dualizable object in a balanced monoidal category which involves using the twist (see \cite{JSV}).

The rest of this paper is organized as follows. In section 2 we explain the motivating example: we consider the $d$\nb-dimensional Riemannian bordism category, explain what a {\em thick} morphism in that category is,  and show that the partition function of a $2$\nb-dimensional Riemannian field theory can be expressed as the relative trace of the thick operators that a field theory associates to annuli. Section 2 is motivational and can be skipped by a reader who wants to see the precise definition of $\wh \sC(X,Y)$,  the construction of $\wh \tr$ and a statement of the properties of $\wh\tr$ which are presented in section 3. In section 4 we discuss thick morphisms and their traces in various categories. In section 5 we prove the properties of $\wh \tr$ and deduce the corresponding properties of the trace pairing stated as Theorem \ref{thm:main2} above. 

\thanks{Both authors were partially supported by NSF grants. They would like to thank the referee for many valuable suggestions. The first author visited the second author at the Max-Planck-Institut in Bonn during the Fall of 2009 and in July 2010. He would like to thank the institute for the support and for the stimulating atmosphere.}

\section{Motivation via field theories}\label{sec:motivation}

A well-known aximatization of field theory is due to Graeme Segal \cite{Se} who defines a field theory as a monoidal functor from a bordism category to the category $\TV$ of  topological vector spaces. The precise definition of the bordism category depends on the type of field theory considered; for a $d$-dimensional {\em topological} field theory, the objects are closed $(d-1)$-dimensional manifolds and morphisms are $d$\nb-dimensional bordisms (more precisely, equivalence classes of bordisms where we identify bordisms if they are diffeomorphic relative boundary). Composition is given by gluing of bordisms, and the monoidal structure is given by disjoint union. 

For other types of field theories, the manifolds constituting the objects and morphisms in the bordism category come equipped with an appropriate geometric structure; e.g., a {\em conformal} structure for {\em conformal} field theories, a {\em Riemannian} metric for {\em Riemannian} field theories, or a {\em Euclidean} structure ($=$ Riemannian metric with vanishing curvature tensor) for a {\em Euclidean} field theory. In these cases more care is needed in the definition of the bordism category to ensure the existence of a well-defined composition and the existence of identity morphisms.

Let us consider the Riemannian bordism category $\RBord{d}$. The objects of $\RBord{d}$ are closed Riemannian $(d-1)$\nb-manifolds. A morphism from $X$ to $Y$ is a $d$\nb-dimensional Riemannian bordism $\Sigma$ from $X$ to $Y$, that is, a Riemannian $d$\nb-manifold $\Sigma$ with boundary and an isometry $X\amalg Y\to \p\Sigma$. More precisely, a morphism is an equivalence class of Riemannian bordisms, where two bordisms $\Sigma$, $\Sigma'$ are considered equivalent  if there is an isometry $\Sigma\to \Sigma'$ compatible with the boundary identifications. In order to have a well-defined compositon by gluing Riemannian bordisms we require that all metrics are product metrics near the boundary. To ensure the existence of identity morphisms, we enlarge the set of morphisms from $X$ to $Y$ by also including all isometries $X\to Y$. Pre- or post-composition of a bordism with an isometry is the given bordism with boundary identification modified by the isometry. In particular, the identity isometry $Y\to Y$ provides the identity morphism for $Y$ as object of the Riemannian bordism category $\RBord{d}$.

A more sophisticated way to deal with the issues addressed above  was developed in our paper \cite{STsurvey}. There we don't require the metrics on the bordisms to be a product metric near the boundary; rather, we have more sophisticated objects consisting of a closed $(d-1)$\nb-manifold equipped with a Riemannian collar. Also, it is technically advantageous not to mix Riemannian bordisms and isometries. This is achieved in that paper by constructing a suitable double category (or equivalently, a category internal to categories), whose vertical morphisms are isometries and whose horizontal morphisms are bordisms between closed $(d-1)$\nb-manifolds equipped with Riemannian collars. The $2$\nb-morphisms are isometries of such bordisms, relative boundary. When using the results of the current paper in \cite{STpartition}, we translate between the approach here using categories versus the approach via internal categories used in \cite{STsurvey}.
 
Let $E$ be $d$\nb-dimensional Riemannian field theory, that is, a symmetric monoidal functor
\[
E\colon \RBord{d}\ra \TV.
\]
For the bordism category $\RBord{d}$ the symmetric monoidal structure  is given by disjoint union; for the category $\TV$ it is given by the projective tensor product. Let $X$ be a closed Riemannian $(d-1)$\nb-manifold and $\Sigma$ be a Riemannian bordism from $X$ to itself. Let $\Sigma_{gl}$ be the closed Riemannian manifold obtained by gluing the two boundary pieces (via the identity on $X$). Both $\Sigma$ and $\Sigma_{gl}$ are morphisms in $\RBord{d}$:
\[
\Sigma\colon X\ra X
\qquad
\Sigma_{gl}\colon \emptyset\ra \emptyset
\]
We note that $\emptyset$ is the monoidal unit in $\RBord{d}$, and hence the vector space $E(\emptyset)$ can be identified with $\C$, the monoidal unit in $\TV$. In particular, $E(\Sigma_{gl})\in \Hom(E(\emptyset),E(\emptyset))=\Hom(\C,\C)=\C$ is a complex number.

\medskip

\noindent{\bf Question.} How can we calculate $E(\Sigma_{gl})\in\C$ in terms of the operator $E(\Sigma)\colon E(X)\to E(X)$?

\medskip

We would like to say that $E(\Sigma_{gl})$ is the {\em trace} of the operator $E(\Sigma)$, but to do so we need to check that the conditions guaranteeing a well-defined trace are met. For a {\em topological} field theory $E$ this is easy: In the topological bordism category every object $X$ is dualizable (see Definition \ref{def:dualizable}), hence $E(X)$ is dualizable in $\TV$ which is equivalent to $\dim E(X)<\infty$. By contrast, for a {\em Euclidean} field theory the vector space $E(X)$ is typically infinite dimensional, and hence to make sense of the trace of the operator $E(\Sigma)$ associated to a bordism $\Sigma$ from $X$ to itself, we need to check that the operator $E(\Sigma)$ is thick and that the vector space $E(X)$ has the trace property. 

It is easy to prove (see Theorem \ref{thm:bord}) that every object $X$ of the bordism category $\RBord{d}$ has the trace property and that among the morphisms of $\RBord{d}$ (consisting of Riemannian bordisms and isometries), exactly the bordisms are thick. The latter characterization motivated the adjective `thick', since we think of isometries as `infinitely thin' Riemannian bordisms. It is straightforward to check that being {\em thick} is a {\em functorial property} in the sense that the thickness of $\Sigma$ implies that $E(\Sigma)$ is thick. Unfortunately, as already mentioned in the introduction, the trace property is not functorial and we cannot conclude that $E(X)$ has the trace property.

Replacing the problematical trace by the well-behaved {\em trace pairing} leads to the following result. It is applied in \cite{STpartition} to prove the modularity and integrality of the partition function of a supersymmetric Euclidean field theory of dimension $2$. 
 
\begin{thm} Suppose $\Sigma_1$ is a Riemannian bordism of dimension $d$ from $X$ to $Y$, and $\Sigma_2$ is a Riemannian bordism from $Y$ to $X$. Let $\Sigma=\Sigma_1\circ \Sigma_2$ be the bordism from $Y$ to itself obtained by composing the bordisms $\Sigma_1$ and $\Sigma_2$, and let $\Sigma_{gl}$ be the closed Riemannian $d$\nb-manifold obtained from $\Sigma$ by identifying the two copies of $Y$ that make up its boundary. If $E$ is $d$\nb-dimensional Riemannian field theory, then 
\[
E(\Sigma_{gl})=\tr(E(\Sigma_2),E(\Sigma_1)).
\]
\end{thm}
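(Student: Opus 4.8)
The plan is to identify both sides of the asserted equality as instances of the trace pairing applied to the same data, and then invoke the relationship \eqref{eq:trace-pairing_trace} together with the functoriality of $E$ and of the trace pairing. First I would recall from the motivational discussion (proved as Theorem~\ref{thm:bord}) that every object of $\RBord d$ has the trace property, and that a Riemannian bordism---as opposed to an isometry---is a thick morphism. In particular $\Sigma_1\in\RBord d^{tk}(X,Y)$ and $\Sigma_2\in\RBord d^{tk}(Y,X)$, so the trace pairing $\tr(\Sigma_2,\Sigma_1)\in\RBord d(\emptyset,\emptyset)$ is defined. Since $Y$ has the trace property, part~(1) of Theorem~\ref{thm:main2} (equivalently \eqref{eq:trace-pairing_trace}) gives $\tr(\Sigma_2,\Sigma_1)=\tr(\Sigma_2\circ\Sigma_1)=\tr(\Sigma)$, where $\Sigma=\Sigma_1\circ\Sigma_2$ is regarded as an endomorphism of $Y$. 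Wait---I must be careful about the order of composition: with $\Sigma_1\colon X\to Y$ and $\Sigma_2\colon Y\to X$, the statement writes $\Sigma=\Sigma_1\circ\Sigma_2\colon Y\to Y$, so that in the pairing notation the relevant identity is $\tr(E(\Sigma_2),E(\Sigma_1))=\tr(E(\Sigma_1)\circ E(\Sigma_2))=\tr(E(\Sigma))$ once we know the target objects have the trace property in $\TV$, which is exactly what we do \emph{not} want to assume; so instead I will keep everything on the bordism side as long as possible.

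The cleaner route: work entirely in $\RBord d$ first. There $Y$ has the trace property, so $\tr(\Sigma_2,\Sigma_1)=\tr(\Sigma)\in\RBord d(\emptyset,\emptyset)$, and I claim $\tr(\Sigma)=\Sigma_{gl}$ as morphisms $\emptyset\to\emptyset$ in $\RBord d$. This is the geometric heart of the argument. To see it, unwind the definition \eqref{eq:trace} of the categorical trace: a thickener of $\Sigma\colon Y\to Y$ is a triple $(Z,t,b)$ realizing the factorization \eqref{eq:factorization}; in the bordism category such a factorization is obtained by cutting $\Sigma$ along a hypersurface, and the canonical choice (the one used in the proof of Theorem~\ref{thm:bord}) writes $\Sigma$ through a bordism $t\colon\emptyset\to Y\amalg Z$ and $b\colon Z\amalg Y\to\emptyset$. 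Then $\tr(\Sigma)$ is the composite $\emptyset\xrightarrow{t} Y\amalg Z\xrightarrow{s}Z\amalg Y\xrightarrow{b}\emptyset$, and carrying out this gluing---where the switching isomorphism $s$ implements the identification that swaps the two copies of $Y$ and $Z$---produces precisely the closed manifold obtained by gluing the two boundary copies of $Y$ in $\Sigma$ to each other, i.e.\ $\Sigma_{gl}$. I would spell this out with a short explicit description of the decomposition used for $\RBord d$ and a picture, if one has already appeared in Section~\ref{sec:motivation}.

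Finally I would apply the monoidal functor $E$. Since $E$ is symmetric monoidal, by the discussion preceding Theorem~\ref{thm:main2} it preserves thick morphisms, commutes with $\Psi$, and---being compatible with the switching isomorphisms---commutes with $\wh\tr$; hence it is compatible with the trace pairing, i.e.\ $E(\tr(\Sigma_2,\Sigma_1))=\tr(E(\Sigma_2),E(\Sigma_1))$. Combining the three steps: $E(\Sigma_{gl})=E(\tr(\Sigma))=E(\tr(\Sigma_2,\Sigma_1))=\tr(E(\Sigma_2),E(\Sigma_1))$, using $E(\emptyset)=\C$ to identify $E$ applied to a morphism $\emptyset\to\emptyset$ with a complex number. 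I expect the main obstacle to be the middle step---verifying carefully that the categorical trace in $\RBord d$ of the endomorphism $\Sigma$, computed via \eqref{eq:trace} with the standard thickener, literally reproduces the glued manifold $\Sigma_{gl}$, including getting the boundary identifications and the role of $s_{Y,Z}$ exactly right; everything else is bookkeeping with results already established (Theorem~\ref{thm:bord}, \eqref{eq:trace-pairing_trace}, and functoriality of the pairing).
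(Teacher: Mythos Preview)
Your proposal is correct and follows essentially the same approach as the paper: establish thickness of $\Sigma_1,\Sigma_2$ and the trace property of objects in $\RBord d$ via Theorem~\ref{thm:bord}, use \eqref{eq:trace-pairing_trace} on the bordism side to get $\tr(\Sigma_2,\Sigma_1)=\tr(\Sigma)$, identify $\tr(\Sigma)=\Sigma_{gl}$ (which is exactly part~(3) of Theorem~\ref{thm:bord}, so you can simply cite it rather than re-deriving the gluing argument), and then apply functoriality of the trace pairing under $E$. Your caution about not assuming the trace property for $E(Y)$ and therefore keeping the argument in $\RBord d$ is exactly the point.
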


\begin{proof} 
By Theorem \ref{thm:bord} the bordisms $\Sigma_1\colon X\to Y$ and $\Sigma_2\colon Y\to X$ are  thick  morphisms in $\RBord{d}$, and hence the morphism $\tr(\Sigma_1,\Sigma_2)\colon \emptyset\to \emptyset$ is defined. Moreover, every object $X\in\RBord{d}$ has the trace property (see Theorem \ref{thm:bord}) and hence
\[
\tr(\Sigma_1,\Sigma_2)=\tr(\Sigma_2\circ \Sigma_1)=\tr(\Sigma)
\]
In part (3) of Theorem \ref{thm:bord} we will show that $\tr(\Sigma)=\Sigma_{gl}$. Then functoriality of the construction of the trace pairing implies
\[
E(\Sigma_{gl})=E(\tr(\Sigma_1,\Sigma_2))=\tr(E(\Sigma_1),E(\Sigma_2))
\]
\end{proof}

\section{Thickened morphisms and their traces} \label{sec:thick}
In this section we will define the  {\em thickened morphisms} $\wh\sC(X,Y)$ and the trace $\wh\tr(\wh f)\in \sC(\one,\one)$ of thickened endomorphisms $\wh f\in \wh\sC(X,X)$ for a monoidal category $\sC$ equipped with a natural family of isomorphisms $s_{X,Y}\colon X\otimes Y\to Y\otimes X$. 

We recall that a {\em monoidal category} is a category $\sC$ equipped with a functor 
\[
\otimes\colon \sC\times\sC\to\sC
\]
called the {\em tensor product}, a distinguished element $\one\in \sC$ and natural isomorphisms
\begin{align*}
\alpha_{X,Y,Z}&\colon (X\otimes Y)\otimes Z\overset\cong\ra X\otimes(Y\otimes Z)
&&\text{(associator)}\\
\ell_X&\colon \one\otimes X\overset\cong\ra X
&&\text{(left unit constraint)}\\
r_X&\colon X\otimes \one\overset\cong\ra X
&&\text{(right unit constraint)}
\end{align*}
for objects $X,Y,Z\in \sC$. These natural isomorphisms are required to make two diagrams (known as the {\em associativity pentagon} and the {\em triangle for unit}) commutative, see \cite{McL}.

It is common to use diagrams to represent morphisms in $\sC$ (see for example \cite{JS1}). The pictures 

 \begin{center}
\begin{tikzpicture}
	\draw[thick] (0,0)  to (0,1)node[left]{$U$};
	\draw[thick] (1,0)  to (1,1)node[left]{$V$};
	\draw[thick] (2,0)  to (2,1)node[left]{$W$};
	\draw[thick] (.5,0)  to (.5,-1)node[left]{$X$};
	\draw[thick] (1.5,0)  to (1.5,-1)node[left]{$Y$};
	\node[rectangle,rounded corners,fill=white,draw,minimum height=.65cm,minimum width=2.5cm] at (1,0){$f$};
\end{tikzpicture}
\qquad\qquad
\begin{tikzpicture}
	\draw[thick] (0,0)  to (0,1)node[left]{$X$};
	\draw[thick] (1,0)  to (1,1)node[right]{$X'$};
	\draw[thick] (0,0)  to (0,-1)node[left]{$Y$};
	\draw[thick] (1,0)  to (1,-1)node[right]{$Y'$};
	\node[rectangle,rounded corners,fill=white,draw,minimum height=.65cm,minimum width=1.5cm] at (.5,0){$g\otimes g'$};
	\node(=) at (2,0){$=$};
	\draw[thick] (3,0)  to (3,1)node[left]{$X$};
	\draw[thick] (3.7,0)  to (3.7,1)node[right]{$X'$};
	\draw[thick] (3,0)  to (3,-1)node[left]{$Y$};
	\draw[thick] (3.7,0)  to (3.7,-1)node[right]{$Y'$};
	\node[rectangle,rounded corners,fill=white,draw,minimum height=.65cm,minimum width=.5cm] at (3,0){$g$};
	\node[rectangle,rounded corners,fill=white,draw,minimum height=.65cm,minimum width=.5cm] at (3.7,0){$g'$};
\end{tikzpicture}
\end{center}
represent a morphism $f\colon U\otimes V\otimes W\to X\otimes Y$ and the tensor product of morphisms $g\colon X\to Y$ and $g'\colon X'\to Y'$, respectively. 
The composition $h\circ g$ of morphisms $g\colon X\to Y$ and $h\colon Y\to Z$ is represented by the picture
\begin{center}
\begin{tikzpicture}
	\draw[thick] (0,-.5)  to (0,1)node[left]{$X$};
	\draw[thick] (0,-.5)  to (0,-2)node[left]{$Z$};
	\node[rectangle,rounded corners,fill=white,draw,minimum height=.65cm,minimum width=1cm] at (0,-.5){$h\circ g$};
	\node(=) at (1,-.5){$=$};
	\draw[thick] (2,0)  to (2,1)node[left]{$X$};
	\draw[thick] (2,0)  to node[left]{$Y$}(2,-1);
	\draw[thick] (2,-1)  to (2,-2)node[left]{$Z$};
		\node[rectangle,rounded corners,fill=white,draw,minimum height=.65cm,minimum width=.5cm] at (2,0){$g$};
	\node[rectangle,rounded corners,fill=white,draw,minimum height=.65cm,minimum width=.5cm] at (2,-1){$h$};
\end{tikzpicture}
\end{center}

With tensor products being represented by juxtaposition of pictures, the isomorphisms $X\cong X\otimes\one\cong \one \otimes X$ suggest to delete edges labeled by the monoidal unit $\one$ from our picture. E.g., the pictures
\begin{equation}
\begin{tikzpicture}
	\draw[thick] (0,0) -- +(0,-1.5) node[left]{$Y$};
	\draw[thick] (1,0) -- +(0,-1.5) node[right]{$Z$};
	\draw[thick] (5,-1) -- +(0,1.5) node[left]{$Z$} ;
	\draw[thick] (6,-1) -- +(0,1.5) node[right]{$X$} ;
\node[rectangle,rounded corners,fill=white,draw,minimum height=.5cm,minimum width=1.5cm] at (.5,0){$t$};
\node[rectangle,rounded corners,fill=white,draw,minimum height=.5cm,minimum width=1.5cm] at (5.5,-1){$b$};
\end{tikzpicture}
\end{equation}
represent morphisms $t\colon \one\to Y\otimes Z$ and $b\colon Z\otimes X\to \one$, respectively.
Rephrasing Definition \ref{def:thick} of the introduction in our pictorial notation, a morphism $f\colon X\to Y$ in $\sC$ is {\em thick} if is can be factored in the form
\begin{equation}\label{eq:up_down}
\begin{tikzpicture}
	\draw[thick] (2,0) -- +(0,-2) node[left]{$Y$};
	\draw[thick] (3,0)  -- node[left]{$Z$} +(0,-1.5) ;
	\draw[thick] (4,-1.5) -- +(0,2) node[right]{$X$} ;
	\draw[thick] (0,.5)node[left]{$X$} -- +(0,-2.5) node[left]{$Y$};
\node at (1,-.75){$=$};
\node[rectangle,rounded corners,fill=white,draw,minimum height=.5cm,minimum width=1.5cm] at (2.5,0){$t$};
\node[rectangle,rounded corners,fill=white,draw,minimum height=.5cm,minimum width=1.5cm] at (3.5,-1.5){$b$};
\node[rectangle,rounded corners,fill=white,draw,minimum height=.5cm,minimum width=.5cm] at (0,-.75){$f$};
\end{tikzpicture}
\end{equation}
Here $t$ stands for {\em top} and $b$ for {\em bottom}. We will use the notation $\sC^{tk}(X,Y)\subset \sC(X,Y)$ for the subset of thick morphisms.

\subsection{Thickened morphisms}
It will be convenient for us to characterize the  thick morphisms as the image of a map 
\[
\Psi\colon \wh \sC(X,Y)\ra\sC(X,Y),
\]
the domain of which we refer to as  {\em thickened morphisms}. 

\begin{defn}\label{def:thickened}
Given objects $X,Y\in\sC$, a {\em thickened morphism} from $X$ to $Y$ is an equivalence class of  triples $(Z,t,b)$ consisting of   an object $Z\in \sC$, and morphisms 
\[
t\colon \one\ra Y\otimes Z
\qquad
b\colon Z\otimes X\ra \one
\]
To describe the equivalence relation, it is useful to think of these triples as objects of a category, and to define 
a morphism from $(Z,t,b)$ to  $(Z',t',b')$ to be a morphism $g\in \sC( Z,Z')$ such that
\begin{equation}\label{eq:equivalence}
\begin{tikzpicture}
	\def\hor{0}
	\def\vert{0}
\node at ($(0,-1)+(\hor,\vert)$){$t'$};
\node at ($(1,-1)+(\hor,\vert)$){$=$};
	\def\hor{2}
	\def\vert{0}
\draw[thick]($(1,0)+(\hor,\vert)$) --node[right]{$Z$}+(0,-1.5);
\draw[thick]($(0,0)+(\hor,\vert)$) --+(0,-2.5)node[left]{$Y$};
\draw[thick]($(1,-1.5)+(\hor,\vert)$) --+(0,-1)node[right]{$Z'$};
\node[rectangle,rounded corners,fill=white,draw,minimum height=.5cm,minimum width=1.5cm] at ($(.5,0)+(\hor,\vert)$){$t$};
\node[rectangle,rounded corners,fill=white,draw,minimum height=.5cm,minimum width=.5cm] at ($(1,-1.5)+(\hor,\vert)$){$g$};
\node at (5,-1){and};
\node at (7,-1){$b$};
\node at (8,-1){$=$};
	\def\hor{9}
	\def\vert{0}
\draw[thick]($(0,0)+(\hor,\vert)$)node[left]{$Z$} --+(0,-1);
\draw[thick]($(0,-1)+(\hor,\vert)$) --node[left]{$Z'$}+(0,-1.5);
\draw[thick]($(1,0)+(\hor,\vert)$)node[right]{$X$} --+(0,-2.5);
\node[rectangle,rounded corners,fill=white,draw,minimum height=.5cm,minimum width=1.5cm] at ($(.5,-2.5,0)+(\hor,\vert)$){$b'$};
\node[rectangle,rounded corners,fill=white,draw,minimum height=.5cm,minimum width=.5cm] at ($(0,-1)+(\hor,\vert)$){$g$};
\end{tikzpicture}
\end{equation}
Two triples $(Z,t,b)$ and $(Z',t',b')$ are {\em equivalent} if there is are triples $(Z_i,t_i,b_i)$ for $i=1,\dots, n$ with $(Z,t,b)=(Z_1,t_1,b_1)$ and $(Z',t',b')=(Z_n,t_n,b_n)$ and morphisms $g_i$ between  $(Z_i,t_i,b_i)$ and $(Z_{i+1},t_{i+1},b_{i+1})$ (this means that $g_i$ is either a morphism from $(Z_i,t_i,b_i)$ to  $(Z_{i+1},t_{i+1},b_{i+1})$ or from $(Z_{i+1},t_{i+1},b_{i+1})$ to $(Z_i,t_i,b_i)$). In other words, a thickened morphism is a path component of the category defined above. We write $\wh\sC(X,Y)$ for the thickened morphisms from $X$ to $Y$. 
 \end{defn}

As suggested by the referee, it will be useful to regard $\wh\sC(X,Y)$ as a {\em coend}; this will streamline the proofs of some results. Let us consider the functor
\[
S\colon \sC^{op}\times \sC\ra \Set
\qquad\text{given by}\qquad
(Z',Z)\mapsto \sC(\one, Y\otimes Z)\times \sC(Z'\otimes X,\one)
\]
Then the elements of $\coprod_{Z\in\sC}S(Z,Z)$ are triples $(Z,t,b)$ with $t\in \sC(\one,Y\otimes Z)$ and $b\in \sC(Z\otimes X,\one)$. 
Any morphism $g\colon Z\to Z'$ induces maps
\[
\xymatrix@1{\sC(Z',Z)\ar[rr]^{S(g,\id_Z)}&&\sC(Z,Z)}
\qquad\text{and}\qquad
\xymatrix@1{\sC(Z',Z)\ar[rr]^{S(\id_{Z'},g)}&&\sC(Z',Z')}
\]
We note that for any $(t,b')\in S(Z',Z)$ the two triples 
\[
(Z,S(g,\id_Z)(t,b'))=(Z,t,b'\circ g)
\qquad\text{and}\qquad
(Z',S(\id_{Z'},g)(t,b'))=(Z',g\circ t,b')
\]
represent the {\em same} element in $\wh C(X,Y)$. In fact, by construction $\wh C(X,Y)$ is the coequalizer
\[
\opn{coequalizer}\left(\xymatrix{
\displaystyle{\coprod_{g\in \sC(Z,Z')}}S(Z',Z)\,\ar@<1.5ex>[rr]^{\coprod S(g,\id_Z)}\ar@<.5ex>[rr]_{\coprod S(\id_{Z'},g)}&&\,\displaystyle{\coprod_{Z\in\sC}} S(Z,Z)
}\right)
\]
i.e., the quotient space of $\coprod_{Z\in\sC}S(Z,Z)$ obtained by identifying all image points of these two maps. 

This coequalizer can be formed for any functor $S\colon \sC^{op}\times \sC\to\Set$; it is called the {\em coend} of $S$, and following \cite[Ch.\ IX, \S  6]{McL}, we will use the integral notation 
\[
\int^{Z\in \sC}S(Z,Z)
\]
for the coend. Summarizing our discussion, we have the following way of expressing  $\wh\sC(X,Y)$ as a coend:
\begin{equation}\label{eq:coend}
\wh\sC(X,Y)=\int^{Z\in\sC}  \sC(I,Y\otimes Z)\times \sC(Z\otimes X,I)
\end{equation}

\begin{lem}\label{lem:Psi_welldefined} Given a triple $(Z,t,b)$ as above, let $\Psi(Z,t,b)\in\sC(X,Y)$ be the composition on the right hand side of equation \eqref{eq:up_down}. 
Then  $\Psi$ only depends on the equivalence class $[Z,t,b]$ of $(Z,t,b)$, \ie the following map is well-defined:
\begin{equation}\label{eq:Psi}
\begin{tikzpicture}
[scale=.7,
big/.style={rectangle,rounded corners,fill=white,draw,minimum height=.3cm,minimum width=1cm},
small/.style={rectangle,rounded corners,fill=white,draw,minimum height=.3cm,minimum width=.3cm}]
\node at (0,-.5){$\Psi\colon \wh \sC(X,Y)\ra\sC(X,Y)$};
\node at (6,-.5){$[Z,t,b]\mapsto\Psi(Z,t,b)=$};
\begin{scope}[shift={(9,0)}]
\draw[thick](0,-.5)--(0,-2.5)node[below]{$Y$};
\draw[thick](1,-.5)--(1,-2);
\node at(1.4,-1.25){$X^{\spcheck}$};
\draw[thick](2,-2)--(2,.5)node[above]{$X$};
\node at (.5,-.5)[big]{$t$};
\node at (1.5,-2)[big]{$\ev$};
\end{scope}
\end{tikzpicture}
\end{equation}
\end{lem}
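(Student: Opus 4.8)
The plan is to exploit the coend description \eqref{eq:coend} just set up. Recall that $\wh\sC(X,Y)=\int^{Z}S(Z,Z)$ is, by construction, a coequalizer; so by its universal property, giving a map $\wh\sC(X,Y)\to\sC(X,Y)$ is the same as giving, for each object $Z$, a map
\[
\Psi_Z\colon \sC(\one,Y\otimes Z)\times\sC(Z\otimes X,\one)\longrightarrow\sC(X,Y)
\]
which is a \emph{wedge} for $S$, i.e.\ which coequalizes the two maps out of $\coprod_{g\in\sC(Z,Z')}S(Z',Z)$. We take $\Psi_Z(t,b)$ to be the composite on the right-hand side of \eqref{eq:up_down}, that is $\ell_Y\circ(\id_Y\otimes b)\circ\alpha\circ(t\otimes\id_X)\circ\ell_X^{-1}$ (coherence isomorphisms written in to be safe). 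With this choice the wedge condition becomes the single identity
\[
\Psi_Z\bigl(t,\,b'\circ(g\otimes\id_X)\bigr)\;=\;\Psi_{Z'}\bigl((\id_Y\otimes g)\circ t,\,b'\bigr)
\]
for every $g\colon Z\to Z'$ and every $(t,b')\in\sC(\one,Y\otimes Z)\times\sC(Z'\otimes X,\one)$; unwinding the definitions, this is exactly the statement that $\Psi$ takes the same value on $(Z,t,b)$ and $(Z',t',b')$ whenever $g$ defines a morphism between them in the sense of \eqref{eq:equivalence}, i.e.\ that $\Psi$ is constant along a generating move of the equivalence relation of Definition \ref{def:thickened}.

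The identity itself is immediate from the bifunctoriality of $\otimes$. Suppressing the coherence isomorphisms, the left side equals $(\id_Y\otimes(b'\circ(g\otimes\id_X)))\circ(t\otimes\id_X)=(\id_Y\otimes b')\circ(\id_Y\otimes g\otimes\id_X)\circ(t\otimes\id_X)$, while the right side equals $(\id_Y\otimes b')\circ((\id_Y\otimes g)\circ t\otimes\id_X)=(\id_Y\otimes b')\circ(\id_Y\otimes g\otimes\id_X)\circ(t\otimes\id_X)$, the two being identified by the interchange law $(\phi\circ\psi)\otimes(\phi'\circ\psi')=(\phi\otimes\phi')\circ(\psi\otimes\psi')$ and associativity of composition. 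In the string-diagram language of \eqref{eq:up_down} this is the visually evident move of sliding the box $g$ along the $Z$/$Z'$ strand from just below $t$ to just above $b'$: since $\otimes$ is a functor and composition is associative, the value of the picture does not change. Once the wedge condition is verified, the coend's universal property produces the desired well-defined map $\Psi\colon\wh\sC(X,Y)\to\sC(X,Y)$ on the nose.

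I do not expect a genuine obstacle here; the content of the lemma is precisely the functoriality of the tensor product, and the coend reformulation is what makes this transparent. The only bookkeeping points are: (i) the coherence (associator and unit) isomorphisms that were elided, which are reinstated and absorbed using Mac Lane's coherence theorem, or avoided by passing to a strictification of $\sC$; and (ii) the fact that the equivalence relation of Definition \ref{def:thickened} is the one \emph{generated} by such morphisms $g$ and their formal reverses (zig-zags). Point (ii) costs nothing, because what we establish is the symmetric equality $\Psi(Z,t,b)=\Psi(Z',t',b')$, so invariance under one elementary step immediately yields invariance along an arbitrary zig-zag and hence descent to path components.
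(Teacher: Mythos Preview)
Your proof is correct and is essentially the same as the paper's: both arguments reduce to checking that $\Psi$ is unchanged under a single generating move $g\colon (Z,t,b)\to(Z',t',b')$, which amounts to sliding $g$ along the middle strand, i.e.\ the interchange law for $\otimes$. The paper carries this out directly with a three-panel string diagram, while you package the same computation as the wedge condition for the coend description \eqref{eq:coend} that the paper has just introduced; the content is identical, and your observation about zig-zags (point (ii)) is exactly the implicit step the paper leaves to the reader.
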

We note that by construction the image of $\Psi$ is equal to $\sC^{tk}(X,Y)$, the set of thick morphisms from $X$ to $Y$.  As mentioned in the introduction, for $f\in \sC(X,Y)$ we call any $\wh f=[Z,t,b]\in \wh\sC(X,Y)$ with $\Psi(\wh f)=f$ a {\em thickener} of $f$.

\begin{rem}\label{rem:thick}
The difference between an {\em orientable} versus an {\em oriented} manifold is that the former is a property, whereas the latter is an additional structure on the manifold. In a similar vain, being {\em thick} is a property of a morphism $f\in \sC(X,Y)$, whereas a {\em thickener} $\wh f$ is an additional structure. To make the analogy between these situations perfect, we were tempted to introduce the words {\em thickenable} or {\em thickable} into mathematical English. However, we finally decided against it, particularly because the thick-thin distinction for morphisms in the bordism category is just perfectly suited for the purpose.
\end{rem}

\begin{proof} Suppose that $g\colon Z\to Z'$ is an equivalence from $(Z,t,b)$ to $(Z',t',b')$ in the sense of Definition \ref{def:thickened}. Then the following diagram shows that $\Psi(Z',t',b')=\Psi(Z,t,b)$.
\begin{equation*}
\begin{tikzpicture}
	\def\hor{0}
	\def\vert{0}
	\draw[thick] ($(1,0)+(\hor,\vert)$)  -- node[left]{$Z'$} +(0,-2) ;
	\draw[thick] ($(2,-2)+(\hor,\vert)$) -- +(0,2.5) node[right]{$X$} ;
	\draw[thick] ($(0,0)+(\hor,\vert)$) -- +(0,-2.5) node[left]{$Y$};
	\node[rectangle,rounded corners,fill=white,draw,minimum height=.5cm,minimum width=1.5cm] at ($(.5,0)+(\hor,\vert)$){$t'$};
	\node[rectangle,rounded corners,fill=white,draw,minimum height=.5cm,minimum width=1.5cm] at ($(1.5,-2)+(\hor,\vert)$){$b'$};
	\node at ($(3,-1)+(\hor,\vert)$){$=$};
	\def\hor{4}
	\def\vert{0}
	\draw[thick] ($(1,0)+(\hor,\vert)$)  -- node[left]{$Z$} +(0,-1) ;
	\draw[thick] ($(1,-1)+(\hor,\vert)$)  -- node[left]{$Z'$} +(0,-1) ;
	\draw[thick] ($(2,-2)+(\hor,\vert)$) -- +(0,2.5) node[right]{$X$} ;
	\draw[thick] ($(0,0)+(\hor,\vert)$) -- +(0,-2.5) node[left]{$Y$};
	\node[rectangle,rounded corners,fill=white,draw,minimum height=.5cm,minimum width=1.5cm] at ($(.5,0)+(\hor,\vert)$){$t$};
	\node[rectangle,rounded corners,fill=white,draw,minimum height=.5cm,minimum width=1.5cm] at ($(1.5,-2)+(\hor,\vert)$){$b'$};
	\node[rectangle,rounded corners,fill=white,draw,minimum height=.5cm,minimum width=.5cm] at ($(1,-1)+(\hor,\vert)$){$g$};
\node at ($(3,-1)+(\hor,\vert)$){$=$};
	\def\hor{8}
	\def\vert{0}
	\draw[thick] ($(1,0)+(\hor,\vert)$)  -- node[left]{$Z$} +(0,-2) ;
	\draw[thick] ($(2,-2)+(\hor,\vert)$) -- +(0,2.5) node[right]{$X$} ;
	\draw[thick] ($(0,0)+(\hor,\vert)$) -- +(0,-2.5) node[left]{$Y$};
	\node[rectangle,rounded corners,fill=white,draw,minimum height=.5cm,minimum width=1.5cm] at ($(.5,0)+(\hor,\vert)$){$t$};
	\node[rectangle,rounded corners,fill=white,draw,minimum height=.5cm,minimum width=1.5cm] at ($(1.5,-2)+(\hor,\vert)$){$b$};
\end{tikzpicture}
\end{equation*}
\end{proof}

Thickened morphisms can be pre-composed or post-composed with ordinary morphisms to obtain again thickened morphisms as follows:
\begin{align*}
&\xymatrix@1{
\wh\sC(Y,W)\times\cC(X,Y)\ar[r]^<>(.5){\circ}
&\wh\sC(X,W)}
\qquad &([Z,t,b],f)\mapsto [Z,t,b\circ (\id_Z\otimes f)]\\
&\xymatrix{
\sC(Y,W)\times\wh\cC(X,Y)\ar[r]^<>(.5){\circ}
&\wh\sC(X,W)}
\qquad
&(f,[Z,t,b])\mapsto [Z,(f\otimes \id_Z)\circ t,b]
\end{align*}

The proof of the following result is straightforward and we leave it to the reader.
\begin{lem} \label{lem:hat_composition1} The composition of morphisms with thickened morphisms is well-defined and compatible with the usual composition via the map $\Psi$ in the sense that the following diagrams commutes:
\[
\xymatrix{
\wh\sC(Y,W)\times\sC(X,Y)\ar[d]^{\Psi\times\id}\ar[r]^<>(.5){\circ}
&\wh\sC(X,W)\ar[d]^\Psi\\
\sC(Y,W)\times\sC(X,Y)\ar[r]^<>(.5){\circ}&\sC(X,W)
}
\qquad
\xymatrix{
\sC(Y,W)\times\wh\sC(X,Y)\ar[d]^{\id\times\Psi}\ar[r]^<>(.5){\circ}
&\wh\sC(X,W)\ar[d]^{\Psi}\\
\sC(Y,W)\times\sC(X,Y)\ar[r]^<>(.5){\circ}&\sC(X,W)
}
\]
\end{lem}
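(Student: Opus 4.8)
The plan is to establish two things: that the two stated rules respect the equivalence relation of Definition~\ref{def:thickened}, so that they descend to maps out of $\wh\sC$, and that the two squares commute. Both amount to short diagram chases, and the most efficient packaging reads everything through the coend presentation~\eqref{eq:coend}.

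For well-definedness, fix $f\in\sC(X,Y)$ and consider pre-composition. For each $Z'$, precomposition with $\id_{Z'}\otimes f$ gives a map $\sC(Z'\otimes Y,\one)\to\sC(Z'\otimes X,\one)$ that is natural in $Z'\in\sC^{\mathrm{op}}$; pairing this with the identity of $\sC(\one,W\otimes Z)$ then produces a natural transformation between the two functors $\sC^{\mathrm{op}}\times\sC\to\Set$ whose coends are $\wh\sC(Y,W)$ and $\wh\sC(X,W)$. Since the coend construction is functorial in its argument, this induces the desired map, which on a representative triple is $[Z,t,b]\mapsto[Z,t,b\circ(\id_Z\otimes f)]$. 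Alternatively, and more by hand, one checks that any morphism $g\colon(Z,t,b)\to(Z',t',b')$ of triples is at the same time a morphism $(Z,t,b\circ(\id_Z\otimes f))\to(Z',t',b'\circ(\id_{Z'}\otimes f))$: the defining equation for the tops is untouched, and the one for the bottoms follows from the equation $b=b'\circ(g\otimes\id)$ combined with the interchange law $(g\otimes\id)\circ(\id\otimes f)=g\otimes f=(\id\otimes f)\circ(g\otimes\id)$; as the equivalence relation of Definition~\ref{def:thickened} is generated by zig-zags of such $g$'s, the rule passes to $\wh\sC$. Post-composition with $f\in\sC(Y,W)$ is handled in the mirror-image way, using that $t\mapsto(f\otimes\id_Z)\circ t$ is natural in the covariant variable $Z$.

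For the two squares, the universal property of the coend reduces each identity to a check on a single representative $(Z,t,b)$. Unwinding $\Psi$ from Lemma~\ref{lem:Psi_welldefined} (equivalently from~\eqref{eq:up_down}), both $\Psi(Z,t,b\circ(\id_Z\otimes f))$ and $\Psi(Z,t,b)\circ f$ are assembled from $t$, $b$, $f$ and the coherence isomorphisms, and they coincide once one slides $f$ past $t$ using the interchange law, $(\id_{W\otimes Z}\otimes f)\circ(t\otimes\id_X)=(t\otimes\id_Y)\circ(\id_\one\otimes f)$, together with naturality of the unit constraint; the right-hand square is the same computation with $f$ pushed up past $t$ at the top instead. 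I do not anticipate any genuine obstacle here: as the authors say, this is straightforward, and the only thing requiring attention is the placement of the suppressed associativity and unit isomorphisms — precisely the bookkeeping that Mac Lane's coherence theorem, and the coend formalism, render automatic.
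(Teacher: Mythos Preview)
Your argument is correct, and there is nothing to compare it against: the paper explicitly leaves this proof to the reader as ``straightforward.'' Your two ingredients---functoriality of the coend (or the equivalent direct check that an equivalence $g$ of triples survives the modification of $t$ or $b$) for well-definedness, and the interchange law $(\id\otimes f)\circ(t\otimes\id)=t\otimes f=(t\otimes\id)\circ(\id\otimes f)$ for the commutativity of the squares---are precisely the expected verification.
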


\begin{cor} If $f\colon X\to Y$ is a thick morphism in a monoidal category $\sC$, then the compositions $f\circ g$ and $h\circ f$ are thick for any morphisms $g\colon W\to X$, $h\colon Y\to Z$.
\end{cor}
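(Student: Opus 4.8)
The plan is to deduce this directly from Lemma \ref{lem:hat_composition1} together with the characterization of thick morphisms as precisely the image of $\Psi$ (noted immediately after Lemma \ref{lem:Psi_welldefined}). First I would unpack the hypothesis: saying that $f\colon X\to Y$ is thick means exactly that $f=\Psi(\wh f)$ for some thickener $\wh f\in\wh\sC(X,Y)$, i.e.\ $\wh f=[Z,t,b]$ is a thickened morphism with $\Psi(\wh f)=f$.

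Next I would handle precomposition. Given $g\colon W\to X$, form the thickened morphism $\wh f\circ g\in\wh\sC(W,Y)$ using the precomposition operation $\wh\sC(X,Y)\times\sC(W,X)\to\wh\sC(W,Y)$ defined just before Lemma \ref{lem:hat_composition1} (explicitly, $[Z,t,b]\circ g=[Z,t,b\circ(\id_Z\otimes g)]$). The right-hand commuting square of Lemma \ref{lem:hat_composition1} gives $\Psi(\wh f\circ g)=\Psi(\wh f)\circ g=f\circ g$. Hence $f\circ g$ lies in the image of $\Psi$, which is $\sC^{tk}(W,Y)$, so $f\circ g$ is thick.

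For postcomposition by $h\colon Y\to Z$, I would argue symmetrically: form $h\circ\wh f\in\wh\sC(X,Z)$ via the postcomposition operation $\sC(Y,Z)\times\wh\sC(X,Y)\to\wh\sC(X,Z)$, and invoke the left commuting square of Lemma \ref{lem:hat_composition1} to get $\Psi(h\circ\wh f)=h\circ\Psi(\wh f)=h\circ f$, so $h\circ f$ is thick.

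I do not expect any genuine obstacle here; the statement is a formal corollary, and the only thing worth being careful about is keeping the two composition operations (pre- versus post-) and their respective commuting squares from Lemma \ref{lem:hat_composition1} straight. If one preferred a self-contained argument avoiding the language of thickeners, one could instead just exhibit the factorizations explicitly: given a factorization \eqref{eq:factorization} of $f$ through $(Z,t,b)$, the triple $(Z,t,b\circ(\id_Z\otimes g))$ factors $f\circ g$ and $(Z,(h\otimes\id_Z)\circ t,b)$ factors $h\circ f$, which is really the same computation written out.
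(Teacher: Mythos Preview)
Your proposal is correct and is exactly what the paper intends: the corollary is stated immediately after Lemma \ref{lem:hat_composition1} with no separate proof, so it is meant to follow directly from that lemma together with the observation that $\sC^{tk}(X,Y)=\im\Psi$. One small cosmetic slip: you have the two commuting squares swapped---in the paper's display the \emph{left} square handles $\wh f\circ g$ (precomposition) and the \emph{right} one handles $h\circ\wh f$ (postcomposition).
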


\begin{lem} \label{lem:hat_composition2} Let $\wh f_1\in \wh \sC(X,Y)$, $\wh f_2\in \wh \sC(U,X)$ and $f_i=\Psi(\wh f_i)$. Then $\wh f_1\circ f_2=f_1\circ \wh f_2\in \wh\sC(U,Y)$.
\end{lem}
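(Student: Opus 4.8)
The plan is to unwind both sides into explicit representatives in $\wh\sC(U,Y)$ and then connect those representatives by a short path in the category of triples of Definition~\ref{def:thickened}. Write $\wh f_1=[Z_1,t_1,b_1]$ with $t_1\colon\one\to Y\otimes Z_1$ and $b_1\colon Z_1\otimes X\to\one$, and $\wh f_2=[Z_2,t_2,b_2]$ with $t_2\colon\one\to X\otimes Z_2$ and $b_2\colon Z_2\otimes U\to\one$. Applying the pre- and post-composition formulas displayed just before Lemma~\ref{lem:hat_composition1} gives
\[
\wh f_1\circ f_2=\bigl[Z_1,\,t_1,\,b_1\circ(\id_{Z_1}\otimes f_2)\bigr]
\qquad\text{and}\qquad
f_1\circ\wh f_2=\bigl[Z_2,\,(f_1\otimes\id_{Z_2})\circ t_2,\,b_2\bigr],
\]
both elements of $\wh\sC(U,Y)$, and the task is to show they lie in the same path component.

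The interpolating object is $\wh Z:=Z_1\otimes X\otimes Z_2$, equipped with $\wh t:=t_1\otimes t_2\colon\one\to Y\otimes\wh Z$ and $\wh b:=b_1\otimes b_2\colon\wh Z\otimes U\to\one$ (inserting the evident unit and associativity isomorphisms). I claim that $g:=\id_{Z_1}\otimes t_2\colon Z_1\to\wh Z$ is a morphism of triples from $\bigl(Z_1,t_1,b_1\circ(\id_{Z_1}\otimes f_2)\bigr)$ to $(\wh Z,\wh t,\wh b)$, and that $h:=b_1\otimes\id_{Z_2}\colon\wh Z\to\one\otimes Z_2\cong Z_2$ is a morphism of triples from $(\wh Z,\wh t,\wh b)$ to $\bigl(Z_2,(f_1\otimes\id_{Z_2})\circ t_2,b_2\bigr)$. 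For $g$, the two conditions of Definition~\ref{def:thickened} (with the role of "$X$" there played by $U$) read $\wh t=(\id_Y\otimes g)\circ t_1$, which is immediate from bifunctoriality of $\otimes$ and the unit constraints, and $b_1\circ(\id_{Z_1}\otimes f_2)=\wh b\circ(g\otimes\id_U)$, which is exactly the statement from Lemma~\ref{lem:Psi_welldefined} that $f_2=\Psi(\wh f_2)$ equals $U\cong\one\otimes U\xrightarrow{\,t_2\otimes\id_U\,}X\otimes Z_2\otimes U\xrightarrow{\,\id_X\otimes b_2\,}X$, tensored on the left with $Z_1$ and followed by $b_1$. Symmetrically, for $h$ the condition on the bottom maps, $\wh b=b_2\circ(h\otimes\id_U)$, is immediate, and the condition on the top maps, $(f_1\otimes\id_{Z_2})\circ t_2=(\id_Y\otimes h)\circ\wh t$, is the formula $f_1=\Psi(\wh f_1)=(\id_Y\otimes b_1)\circ(t_1\otimes\id_X)$ tensored on the right with $Z_2$ and precomposed with $t_2$. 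The directed path $\bigl(Z_1,\dots\bigr)\xrightarrow{\,g\,}(\wh Z,\wh t,\wh b)\xrightarrow{\,h\,}\bigl(Z_2,\dots\bigr)$ then certifies $\wh f_1\circ f_2=f_1\circ\wh f_2$.

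The only point that needs real care is the bookkeeping of associators and unit constraints in the four identities above; there is no deeper obstacle. In the string-diagram calculus of Section~\ref{sec:thick} each of these identities is a single planar isotopy — for $g$, slide the box $t_2$ upward along the $Z_1$ strand; for $h$, slide the box $b_1$ downward past the $Z_2$ strand — so I would present the verifications pictorially and invoke coherence rather than spell out the isomorphisms. Equivalently, the whole argument can be phrased through the coend description \eqref{eq:coend}: the two identifications realized by $g$ and $h$ are precisely the defining dinaturality relations of $\int^{Z}\sC(\one,Y\otimes Z)\times\sC(Z\otimes U,\one)$ applied to the morphisms $\id_{Z_1}\otimes t_2$ and $b_1\otimes\id_{Z_2}$, which is arguably the cleanest way to record the proof.
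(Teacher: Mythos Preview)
Your proof is correct. The paper's argument is a one-step version of yours: instead of passing through the intermediate triple $(\wh Z,\wh t,\wh b)$ with $\wh Z=Z_1\otimes X\otimes Z_2$, it uses the single morphism $g:=(b_1\otimes\id_{Z_2})\circ(\id_{Z_1}\otimes t_2)\colon Z_1\to Z_2$ --- exactly the composite $h\circ g$ of your two morphisms --- and checks directly that this $g$ is a morphism of triples from $(Z_1,t_1,b_1\circ(\id_{Z_1}\otimes f_2))$ to $(Z_2,(f_1\otimes\id_{Z_2})\circ t_2,b_2)$. Your factorization through $\wh Z$ has the virtue of making the two verifications (the one involving $\Psi(\wh f_2)$ and the one involving $\Psi(\wh f_1)$) cleanly separated and symmetric, and your final remark about coend dinaturality is a nice way to see why both arguments work; the paper's single-step version is just slightly more economical.
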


\begin{proof} Let $\wh f_i=[Z_i,t_i,b_i]$. Then
$
\wh f_1\circ f_2=[Z_1,t_1,b_1\circ (\id_{Z_1}\otimes f_2)]
$ and 
\[
\begin{tikzpicture}
[big/.style={rectangle,rounded corners,fill=white,draw,minimum height=.6cm,minimum width=1.5cm},
small/.style={rectangle,rounded corners,fill=white,draw,minimum height=.6cm,minimum width=.6cm}]
\begin{scope}[shift={(1,0)}]
	\draw[thick] (0,0)node[left]{$Z_1$} -- + (0,-3);
	\draw[thick] (1,0)node[right]{$U$} --  +(0,-1.5);
	\draw[thick] (1,-1.5) --  node[right]{$X$}+(0,-1.5);
	\node at (.5,-3)[big]{$b_1$};
	\node at (1,-1.5)[small]{$f_2$};
\end{scope}
\begin{scope}[shift={(4,0)}]
	\draw[thick] (0,0)node[left]{$Z_1$} --  +(0,-3);
	\draw[thick] (1,-1.5) --  node[left]{$X$}+(0,-1.5);
	\draw[thick] (2,-1.5) --  node[right]{$Z_2$}+(0,-1.5);
	\draw[thick] (3,0)node[right]{$U$} --  +(0,-3);
	\node at (.5,-3)[big]{$b_1$};
	\node at (1.5,-1.5)[big]{$t_2$};
	\node at (2.5,-3)[big]{$b_2$};
\end{scope}
\begin{scope}[shift={(9,0)}]
	\draw[thick] (0,0)node[left]{$Z_1$} --  +(0,-1.5);
	\draw[thick] (0,-1.5) --  node[right]{$Z_2$}+(0,-1.5);
	\draw[thick] (1,0)node[right]{$U$} --  +(0,-3);
	\node at (0,-1.5)[small]{$g$};
	\node at (.5,-3)[big]{$b_2$};
\end{scope}
\node at (-2,-1.5) {$b_1\circ (\id_{Z_1}\otimes f_2)=$};
\node at (3,-1.5) {$=$};
\node at (8,-1.5) {$=$};

\end{tikzpicture}
\]
where
\[
\begin{tikzpicture}
[big/.style={rectangle,rounded corners,fill=white,draw,minimum height=.6cm,minimum width=1.5cm},
small/.style={rectangle,rounded corners,fill=white,draw,minimum height=.6cm,minimum width=.6cm}]
\begin{scope}[shift={(0,0)}]
	\draw[thick] (0,0)node[left]{$Z_1$} --  +(0,-1.5);
	\draw[thick] (1,0) --  node[left]{$X$}+(0,-1.5);
	\draw[thick] (2,0) --  +(0,-2)node[right]{$Z_2$};
	\node at (.5,-1.5)[big]{$b_1$};
	\node at (1.5,0)[big]{$t_2$};
\end{scope}
\node at(-2,-.7) {$g$};
\node at (-1,-.7) {$=$};
\end{tikzpicture}
\]
Similarly, $f_1\circ \wh f_2=[Z_2,(f_1\otimes \id_{Z_2})\circ t_2,b_2]$, and 
\[
\begin{tikzpicture}
[big/.style={rectangle,rounded corners,fill=white,draw,minimum height=.6cm,minimum width=1.5cm},
small/.style={rectangle,rounded corners,fill=white,draw,minimum height=.6cm,minimum width=.6cm}]
\begin{scope}[shift={(1,0)}]
	\draw[thick]  (1,0)--  +(0,-3)node[right]{$Z_2$};
	\draw[thick] (0,0) -- node[left]{$X$}  +(0,-1.5);
	\draw[thick] (0,-1.5) --  +(0,-1.5)node[left]{$Y$};
	\node at (.5,0)[big]{$t_2$};
	\node at (0,-1.5)[small]{$f_1$};
\end{scope}
\begin{scope}[shift={(4,0)}]
	\draw[thick] (0,0) --  +(0,-3)node[left]{$Y$};
	\draw[thick] (1,0) --  node[left]{$Z_1$}+(0,-1.5);
	\draw[thick] (2,0) --  node[right]{$X$}+(0,-1.5);
	\draw[thick] (3,0) --  +(0,-3)node[right]{$Z_2$};
	\node at (.5,0)[big]{$t_1$};
	\node at (1.5,-1.5)[big]{$b_1$};
	\node at (2.5,0)[big]{$t_2$};
\end{scope}
\begin{scope}[shift={(9,0)}]
	\draw[thick] (1,0) --  +(0,-3)node[right]{$Z_2$};
	\draw[thick] (0,0)node[right]{$Z_1$} --  +(0,-1.5);
	\draw[thick] (0,-1.5) --  +(0,-1.5)node[left]{$Y$};
	\node at (1,-1.5)[small]{$g$};
	\node at (.5,0)[big]{$t_1$};
\end{scope}
\node at (-1.5,-1.5) {$(f_1\otimes \id_{Z_2})\circ t_2=$};
\node at (3,-1.5) {$=$};
\node at (8,-1.5) {$=$};
\end{tikzpicture}
\]
This shows that $g$ is an equivalence from $(Z_1,t_1,b_1\circ (\id_{Z_1}\otimes f_2))$ to $(Z_2,(f_1\otimes \id_{Z_2})\circ t_2,b_2)$ and hence these triples represent the {\em same} element of $\wh \sC(U,Y)$ as claimed.
\end{proof}
\subsection{The trace of a thickened morphism}

Our next goal is to show that for a monoidal category $\sC$ with switching isomorphism $s_{X,Z}\colon X\otimes Z\to Z\otimes X$ the map
\begin{equation}\label{eq:whtr}
\wh\tr\colon \wh\sC(X,X)\ra \sC(\one,\one)
\qquad
[Z,t,b]\mapsto\wh\tr(Z,t,b)
\end{equation}
is well-defined. We recall from equation \ref{eq:trace} of the introduction that $\wh\tr(Z,t,b)$ is defined by $\wh\tr(Z,t,b)=b\circ s_{X,Z}\circ t$.  In our pictorial notation, we write it as
\begin{equation}\label{eq:whtr2}
\begin{tikzpicture}
[scale=.7,
big/.style={rectangle,rounded corners,fill=white,draw,minimum height=.3cm,minimum width=1cm},
small/.style={rectangle,rounded corners,fill=white,draw,minimum height=.3cm,minimum width=.3cm}]
\node at (0,0){$\wh\tr(Z,t,b)=$};
\begin{scope}[shift={(2.5,0)}]
\draw[thick, rounded corners](0,1.5)--(0,.5)--(1,-.5)--(1,-1.5);
\draw[thick, rounded corners](1,1.5)--(1,.5)--(0,-.5)--(0,-1.5);
\node[big] at (.5,1.5){$t$};
\node[big] at (.5,-1.5){$b$};
\node at (-.3,.5){$X$};
\node at (1.3,.5){$Z$};
\node at (-.3,-.5){$Z$};
\node at (1.3,-.5){$X$};
\end{scope}
\node at (6,0){where};
\begin{scope}[shift={(7.5,0)}]
\begin{scope}[shift={(0,0)}]
\draw[thick, rounded corners](0,1)node[above]{$X$}--(0,.5)--(1,-.5)--(1,-1)node[below]{$X$};
\draw[thick, rounded corners](1,1)node[above]{$Z$}--(1,.5)--(0,-.5)--(0,-1)node[below]{$Z$};
\end{scope}
\node at (4,0){is shorthand for};
\begin{scope}[shift={(7,0)}]
\draw[thick, rounded corners](0,1)node[above]{$X$}--(0,-1)node[below]{$X$};
\draw[thick, rounded corners](1,1)node[above]{$Z$}--(1,-1)node[below]{$Z$};
\node[big] at (.5,0){$s_{X,Z}$};
\end{scope}
\end{scope}
\end{tikzpicture}
\end{equation}
We note that the naturality of $s_{X,Y}$ is expressed pictorially as
\begin{equation}\label{eq:s_naturality}
\begin{tikzpicture}
[scale=.7,
big/.style={rectangle,rounded corners,fill=white,draw,minimum height=.5cm,minimum width=1cm},
small/.style={rectangle,rounded corners,fill=white,draw,minimum height=.5cm,minimum width=.5cm}]
\begin{scope}[shift={(0,0)}]
\draw[thick, rounded corners](0,2.5)node[above]{$X_1$}--(0,.5)--(1,-.5)--(1,-2.5)node[below]{$X_2$};
\draw[thick, rounded corners](1,2.5)node[above]{$Y_1$}--(1,.5)--(0,-.5)--(0,-2.5)node[below]{$Y_2$};
\node[small] at (0,1.5){$g$};
\node[small] at (1,1.5){$h$};
\end{scope}
\node at (2.5,0){$=$};
\begin{scope}[shift={(4,0)}]
\draw[thick, rounded corners](0,2.5)node[above]{$X_1$}--(0,.5)--(1,-.5)--(1,-2.5)node[below]{$X_2$};
\draw[thick, rounded corners](1,2.5)node[above]{$Y_1$}--(1,.5)--(0,-.5)--(0,-2.5)node[below]{$Y_2$};
\node[small] at (1,-1.5){$g$};
\node[small] at (0,-1.5){$h$};
\end{scope}
\end{tikzpicture}
\end{equation}
for morphisms $g\colon X_1\to X_2$, $h\colon Y_1\to Y_2$. 
\begin{lem}
$\wh\tr(Z,t,b)$ depends only on the equivalence class of $(Z,t,b)$. In particular, the map \eqref{eq:whtr} is well-defined.
\end{lem}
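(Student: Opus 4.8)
The plan is to verify the claim along a single elementary equivalence and then finish with one application of the naturality of $s$; in particular no braiding-type identity will be used. Because equality in $\sC(\one,\one)$ is an equivalence relation and the relation on triples in Definition~\ref{def:thickened} is generated by the elementary morphisms $g\colon(Z,t,b)\to(Z',t',b')$ (chained, in either direction), it suffices to show that $\wh\tr$ is unchanged along one such $g$. So assume, taking $Y=X$ in \eqref{eq:equivalence}, that $g\colon Z\to Z'$ satisfies $t'=(\id_X\otimes g)\circ t$ and $b=b'\circ(g\otimes\id_X)$.

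Now I would simply compute. Expanding $\wh\tr(Z',t',b')=b'\circ s_{X,Z'}\circ t'$, substituting the relation for $t'$, applying the naturality square \eqref{eq:s_naturality} to the morphisms $\id_X\colon X\to X$ and $g\colon Z\to Z'$ in the form $s_{X,Z'}\circ(\id_X\otimes g)=(g\otimes\id_X)\circ s_{X,Z}$, and finally using the relation for $b$, we obtain
\begin{align*}
\wh\tr(Z',t',b') &= b'\circ s_{X,Z'}\circ(\id_X\otimes g)\circ t\\
&= b'\circ(g\otimes\id_X)\circ s_{X,Z}\circ t = b\circ s_{X,Z}\circ t = \wh\tr(Z,t,b).
\end{align*}
Pictorially this is the evident move: $t'$ is $t$ with a box $g$ inserted on the outgoing $Z$-strand, and naturality slides that box down past the crossing until the relation $b=b'\circ(g\otimes\id_X)$ absorbs it into $b$, reproducing the picture \eqref{eq:whtr2} for $\wh\tr(Z,t,b)$. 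Equivalently, in the coend language of \eqref{eq:coend} this computation is precisely the dinaturality of the assignment $(Z,t,b)\mapsto b\circ s_{X,Z}\circ t$, which is exactly what is needed for it to descend to $\wh\sC(X,X)$.

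I do not expect any genuine difficulty here; the single point worth emphasizing is that only naturality of $s$ enters — none of the braiding or hexagon relations are invoked, in keeping with the standing assumption that $s$ is merely a natural family of switching isomorphisms. Granting that, well-definedness of the map \eqref{eq:whtr} on $\wh\sC(X,X)$ follows immediately.
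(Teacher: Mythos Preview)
Your proof is correct and follows essentially the same approach as the paper: reduce to a single generating morphism $g\colon(Z,t,b)\to(Z',t',b')$, substitute $t'=(\id_X\otimes g)\circ t$, slide $g$ past the switching isomorphism using naturality of $s$, and absorb it via $b=b'\circ(g\otimes\id_X)$. The paper presents this pictorially (and also remarks on the coend interpretation you mention), but the logical content is identical.
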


\begin{proof}
Suppose that $g\colon Z\to Z'$ is an equivalence from $(Z',t',b')$  to $(Z,t,b)$ (see Definition \ref{def:thickened}). Then
\[
\begin{tikzpicture}
[scale=.7,
big/.style={rectangle,rounded corners,fill=white,draw,minimum height=.3cm,minimum width=1cm},
small/.style={rectangle,rounded corners,fill=white,draw,minimum height=.3cm,minimum width=.3cm}]
\node at (0,0){$\wh\tr(Z',t',b')=$};
\begin{scope}[shift={(2.5,0)}]
\node at (2,0){$=$};
\draw[thick, rounded corners](0,3)--(0,.5)--(1,-.5)--(1,-3);
\draw[thick, rounded corners](1,3)--(1,.5)--(0,-.5)--(0,-3);
\node[big] at (.5,3){$t'$};
\node[big] at (.5,-3){$b'$};
\node at (-.4,2.25){$X$};
\node at (1.4,2.25){$Z'$};
\end{scope}
\begin{scope}[shift={(5.5,0)}]
\node at (2,0){$=$};
\draw[thick, rounded corners](0,3)--(0,.5)--(1,-.5)--(1,-3);
\draw[thick, rounded corners](1,3)--(1,.5)--(0,-.5)--(0,-3);
\node[big] at (.5,3){$t$};
\node[big] at (.5,-3){$b'$};
\node[small] at (1,1.5){$g$};
\node at (-.4,2.25){$X$};
\node at (1.4,2.25){$Z$};
\node at (1.4,.7){$Z'$};
\end{scope}
\begin{scope}[shift={(8.5,0)}]
\node at (2,0){$=$};
\draw[thick, rounded corners](0,3)--(0,.5)--(1,-.5)--(1,-3);
\draw[thick, rounded corners](1,3)--(1,.5)--(0,-.5)--(0,-3);
\node[big] at (.5,3){$t$};
\node[big] at (.5,-3){$b'$};
\node[small] at (0,-1.5){$g$};
\node at (-.4,2.25){$X$};
\node at (1.4,2.25){$Z$};
\node at (-.4,.-2.25){$Z'$};
\end{scope}
\begin{scope}[shift={(11.5,0)}]
\node at (3,0){$=\wh\tr(Z,t,b)$};
\draw[thick, rounded corners](0,3)--(0,.5)--(1,-.5)--(1,-3);
\draw[thick, rounded corners](1,3)--(1,.5)--(0,-.5)--(0,-3);
\node[big] at (.5,3){$t$};
\node[big] at (.5,-3){$b$};
\node at (-.4,2.25){$X$};
\node at (1.4,2.25){$Z$};
\end{scope}
\end{tikzpicture}
\]
In terms of the coend description, this lemma is actually obvious because $\wh\tr$ is the composition
\[
\wh\sC(X,X)=\int^{Z\in\sC}  \sC(I,X\otimes Z)\times \sC(Z\otimes X,I) \to \int^{Z\in\sC}  \sC(I,Z\otimes X)\times \sC(Z\otimes X,I) \to \sC(I,I)
\]
where the first map is given by switching and the second by composition.
\end{proof}

\section{Traces in various categories}
The goal in this section is to describe thick morphisms in various categories in classical terms and relate our {\em categorical trace} to classical notions of trace. At a more technical level, we describe the map $\Psi\colon \wh \sC(X,Y)\to \sC(X,Y)$ and its image $\sC^{tk}(X,Y)$ in these categories.

In the first subsection this is done for the category of vector spaces (not necessarily finite dimensional). In the second subsection we show that $\Psi$ is a bijection if $X$ is dualizable and hence {\em any} endomorphism of a dualizable object has a well-defined trace. Traces in categories for which all objects are dualizable are well-studied \cite{JS2}. In the third subsection we introduce {\em semi-dualizable} objects and describe the map $\Psi$ in more explicit terms. In a closed monoidal category every object is semi-dualizable. In the following subsection we apply these considerations to the category of Banach spaces. Then we discuss the category of topological vector spaces, and finally the Riemannian bordism category.  

\subsection{The category of vector spaces}
\begin{thm} \label{thm:vect}
Let $\sC$ be the monoidal category $\Vect_\F$ of vector spaces (not necessarily finite dimensional) over a field $\F$ equipped with the usual tensor product and the usual switching isomorphism $s_{X,Y}\colon X\otimes Y\to Y\otimes X$, $x\otimes y\mapsto y\otimes x$.  
\begin{enumerate}
\item A morphism $f\colon X\to Y$ is thick \Iff it has finite rank.
\item The map $\Psi\colon \wh\sC(X,Y)\to \sC(X,Y)$ is injective; in particular, every finite rank endomorphism $f\colon X\to X$ has a well-defined categorical trace $\tr(f)\in\sC(\one,\one)\cong\F$.
\item For a finite rank endomorphism $f$, its categorical trace $\tr(f)$ agrees with its usual (classical) trace which we denote by $\classtr(f)$. 
\end{enumerate}
\end{thm}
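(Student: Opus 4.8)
The plan is to prove the three parts in order, since each relies on the previous ones, and the main work will be identifying $\wh\sC(X,Y)$ explicitly with a concrete algebraic object.

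\textbf{Part (1).} The "if" direction is immediate: a finite rank morphism $f\colon X\to Y$ factors as $X\xrightarrow{(\lambda_1,\dots,\lambda_n)} \F^n \xrightarrow{(y_1,\dots,y_n)} Y$ for linear functionals $\lambda_i\in X^*$ and vectors $y_i\in Y$, and this is exactly a factorization of the form \eqref{eq:factorization} with $Z=\F^n$: take $t\colon \F\to Y\otimes\F^n$ to be $1\mapsto \sum_i y_i\otimes e_i$ and $b\colon \F^n\otimes X\to\F$ to be $e_i\otimes x\mapsto \lambda_i(x)$. Conversely, given any factorization through $Y\otimes Z\otimes X$, I would observe that the middle map $\id_Y\otimes b\colon Y\otimes Z\otimes X\to Y$ only sees the finite-dimensional subspace of $Z$ spanned by the components of $t(1)\in Y\otimes Z$ (a finite sum of elementary tensors), so $f$ factors through a finite-dimensional space and hence has finite rank.

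\textbf{Part (2).} Here I would compute the coend \eqref{eq:coend} directly. For $\Vect_\F$, the set $\sC(\one,X\otimes Z)\times\sC(Z\otimes X,\one)$ is $(X\otimes Z)\times (Z\otimes X)^* = (X\otimes Z)\times(Z^*\otimes X^*)$ — wait, more carefully $\sC(Z\otimes X,\F)=(Z\otimes X)^*$, which contains $Z^*\otimes X^*$ but may be larger; I should instead use the hom-tensor description and run the coend. The cleanest route: the coend $\int^{Z} \sC(\one, Y\otimes Z)\times\sC(Z\otimes X,\one)$ receives, for each $Z$, the pairs $(t,b)$, and the coend relation identifies $(Z,t,b'\circ(g\otimes\id))\sim(Z',g\circ(\ldots),b')$. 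Using that every element of $Y\otimes Z$ is a finite sum of elementary tensors, any triple $(Z,t,b)$ is equivalent to one with $Z$ finite-dimensional; and for finite-dimensional $Z$ the coend over the full subcategory of finite-dimensional spaces collapses via the $\Hom$–$\otimes$ adjunction to give $\wh\sC(X,Y)\cong Y\otimes X^*$, with $\Psi$ corresponding to the canonical map $Y\otimes X^*\to\sC(X,Y)$, $y\otimes\lambda\mapsto(x\mapsto\lambda(x)y)$. This map is injective (choosing a basis, $\sum y_i\otimes\lambda_i$ with the $\lambda_i$ linearly independent is zero as an operator only if all $y_i=0$). Injectivity of $\Psi$ is precisely the trace property for $X$, so by Theorem \ref{thm:main1} every thick, i.e.\ finite rank, endomorphism has a well-defined trace, and $\sC(\one,\one)=\Hom_\F(\F,\F)\cong\F$.

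\textbf{Part (3).} With the identification $\wh\sC(X,X)\cong X\otimes X^*$ and $\Psi$ as above, a finite rank $f$ is represented by $\sum_i x_i\otimes\lambda_i$ with $f(v)=\sum_i\lambda_i(v)x_i$. By definition $\wh\tr(Z,t,b)=b\circ s_{X,Z}\circ t$; unwinding through the identification, with $Z=\F^n$, $t(1)=\sum_i x_i\otimes e_i$ and $b(e_i\otimes v)=\lambda_i(v)$, the switch sends $x_i\otimes e_i\mapsto e_i\otimes x_i$ and then $b$ gives $\sum_i\lambda_i(x_i)$, which is exactly $\classtr(f)$. I would note the independence of the representative is guaranteed by part (2), so this computation on any one factorization suffices.

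\textbf{Main obstacle.} The delicate point is Part (2): carefully evaluating the coend in $\Vect_\F$ and checking that arbitrary (possibly infinite-dimensional) $Z$ can be replaced by finite-dimensional ones without changing the equivalence class — i.e.\ that the coend over all of $\Vect_\F$ agrees with the coend over finite-dimensional spaces. This "dévissage to finite dimensions" step, together with correctly handling that $\sC(Z\otimes X,\F)$ is the full linear dual rather than $Z^*\otimes X^*$, is where the real care is needed; once $\wh\sC(X,Y)\cong Y\otimes X^*$ is established, parts (1) and (3) are routine.
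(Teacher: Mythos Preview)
Your argument is correct and the overall structure matches the paper's, but in Part~(2) you take a more hands-on route than necessary, and the detour you flag as the ``main obstacle'' is one the paper sidesteps entirely.

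The paper observes that $\Vect_\F$ is a closed monoidal category, so for \emph{every} $Z$ (not just finite-dimensional ones) there is a natural bijection $\sC(Z\otimes X,\F)\cong\sC(Z,X^*)$. Feeding this into the coend description and applying the coend form of the Yoneda lemma with $W=X^*$ gives in one stroke
\[
\wh\sC(X,Y)=\int^{Z}\sC(\one,Y\otimes Z)\times\sC(Z\otimes X,\one)\cong\int^{Z}\sC(\one,Y\otimes Z)\times\sC(Z,X^*)\cong\sC(\one,Y\otimes X^*)\cong Y\otimes X^*,
\]
with explicit inverse $t\mapsto[X^*,t,\ev]$. No d\'evissage to finite-dimensional $Z$ is needed, and therefore the comparison of the two coends you worry about never arises. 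Your reduction step is correct (every triple is equivalent to one with $Z$ finite-dimensional), but to finish your way you would still have to check that two finite-dimensional triples which become equivalent through a zigzag in the full category are already equivalent through a finite-dimensional zigzag; this is true but is extra work that the closed-category argument avoids.

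The payoff of the paper's route is that it is formulated and proved once for any semi-dualizable object in any monoidal category (Lemma~\ref{lem:Psi_Phi}), and then reused verbatim for Banach spaces. Your approach is more elementary and self-contained for $\Vect_\F$, but does not export. Parts~(1) and~(3) are essentially identical in both treatments: once $\wh\sC(X,Y)\cong Y\otimes X^*$ is established, the image of $\Psi$ is visibly the finite-rank maps, and the trace computation on an elementary tensor is exactly the one you wrote.
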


For a vector space $X$, let $X^*$ be the dual vector space, and define the morphism
\begin{equation}\label{eq:Phi_vect}
\Phi\colon \sC(\F,Y\otimes X^*)\ra \sC(X,Y)
\end{equation}
by sending $t\colon \F\to Y\otimes X^*$ to the composition 
\[
X\cong \F\otimes X\overset{t\otimes\id_X}\ra
Y\otimes X^*\otimes X\overset{\id_Y\otimes\ev}\ra
Y\otimes \F\cong Y.
\]
We note that the set $\sC(\F,Y\otimes X^*)$ can be identified with $Y\otimes X^*$ via the evaluation map $t\mapsto t(1)$. Using this identification, $\Phi$ maps an elementary tensor $y\otimes g\in Y\otimes X^*$ to the linear map $x\mapsto y g(x)$ which is a rank $\le 1$ operator.  Since finite rank operators are finite sums of rank one operators, we see that the image $\Phi$ consists of the finite  rank operators. The classical trace of the rank one operator $\Phi(y\otimes g)$ is defined to be $g(y)$; by linearity this extends to a well-defined trace for all finite rank operators.

The proof of Theorem \ref{thm:vect} is based on the following lemma that shows that $\Psi$ is equivalent to the map $\Phi$.
\begin{lem}\label{lem:Psi_Phi_vect}
For any vector spaces $X$, $Y$ the map
\begin{equation}\label{eq:alpha_vect}
\alpha\colon \sC(\F,Y\otimes X^*)\ra \wh\sC(X,Y)
\qquad
 t\mapsto [X^*,t,\ev]
\end{equation}
is a bijection and $\Psi\circ \alpha=\Phi$. Here $\ev\colon X^*\otimes X\to \F$ is the evaluation map $g\otimes x\mapsto g(x)$.
\end{lem}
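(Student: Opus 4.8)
The plan is to exhibit an explicit inverse to $\alpha$ and then check the factorization $\Psi\circ\alpha=\Phi$ by a direct diagram chase.

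First I would construct a candidate inverse $\beta\colon\wh\sC(X,Y)\to\sC(\F,Y\otimes X^*)$. Given a triple $(Z,t,b)$ with $t\colon\F\to Y\otimes Z$ and $b\colon Z\otimes X\to\F$, the map $b$ is adjoint to a linear map $\hat b\colon Z\to X^*$ (namely $\hat b(z)=b(z\otimes -)$), and I set $\beta[Z,t,b]:=(\id_Y\otimes\hat b)\circ t\in\sC(\F,Y\otimes X^*)$. The first thing to verify is that $\beta$ is well-defined on equivalence classes: if $g\colon Z\to Z'$ is a morphism of triples in the sense of Definition~\ref{def:thickened}, then $t'=(\id_Y\otimes g)\circ t$ and $b=b'\circ(g\otimes\id_X)$, whence $\widehat{b'\circ(g\otimes\id_X)}=\hat b'\circ g$, so $(\id_Y\otimes\hat b)\circ t=(\id_Y\otimes\hat b')\circ(\id_Y\otimes g)\circ t=(\id_Y\otimes\hat b')\circ t'$; since equivalence is generated by such $g$'s (in either direction), $\beta$ descends to $\wh\sC(X,Y)$.

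Next I would check $\beta\circ\alpha=\id$: for $t\colon\F\to Y\otimes X^*$ we have $\alpha(t)=[X^*,t,\ev]$, and $\widehat{\ev}\colon X^*\to X^*$ is the identity (since $\ev(g\otimes x)=g(x)$), so $\beta\alpha(t)=(\id_Y\otimes\id_{X^*})\circ t=t$. For $\alpha\circ\beta=\id$, given $[Z,t,b]$, the morphism $\hat b\colon Z\to X^*$ satisfies $\ev\circ(\hat b\otimes\id_X)=b$ by definition of $\hat b$, and $(\id_Y\otimes\hat b)\circ t=\beta[Z,t,b]$ by definition of $\beta$; these two identities say precisely that $\hat b$ is a morphism of triples from $(Z,t,b)$ to $(X^*,\beta[Z,t,b],\ev)=(X^*,\alpha\beta[Z,t,b])$, so $[Z,t,b]=\alpha\beta[Z,t,b]$ in $\wh\sC(X,Y)$. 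Thus $\alpha$ is a bijection.

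Finally, the identity $\Psi\circ\alpha=\Phi$: unwinding the definition \eqref{eq:Psi} of $\Psi$, $\Psi\alpha(t)=\Psi(X^*,t,\ev)$ is the composite $X\cong\F\otimes X\xrightarrow{t\otimes\id_X}Y\otimes X^*\otimes X\xrightarrow{\id_Y\otimes\ev}Y\otimes\F\cong Y$, which is exactly the definition of $\Phi(t)$ given after \eqref{eq:Phi_vect}. So the equality is immediate from the definitions. I expect no serious obstacle here; the only mildly delicate point is the well-definedness of $\beta$ on equivalence classes, which needs the observation that $b\mapsto\hat b$ is functorial in $Z$ in the appropriate sense (equivalently, that $\sC(Z\otimes X,\F)\cong\sC(Z,X^*)$ naturally in $Z$, i.e.\ $X^*$ is the internal hom $[X,\F]$ in $\Vect_\F$). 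One could alternatively phrase the whole argument via the coend formula \eqref{eq:coend}, observing that $\int^{Z}\sC(\F,Y\otimes Z)\times\sC(Z\otimes X,\F)\cong\int^{Z}\sC(\F,Y\otimes Z)\times\sC(Z,X^*)\cong\sC(\F,Y\otimes X^*)$ by the co-Yoneda lemma, but the elementary computation above seems more transparent.
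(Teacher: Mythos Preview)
Your proof is correct. The paper itself does not prove this lemma directly but instead defers to the more general Lemma~\ref{lem:Psi_Phi} (valid for any semi-dualizable object in a monoidal category), whose proof invokes the coend form of the Yoneda lemma together with the natural bijection $\sC(Z,X^{\spcheck})\cong\sC(Z\otimes X,\one)$. Your argument unpacks essentially the same mechanism by hand: rather than citing co-Yoneda, you build the inverse $\beta$ explicitly via the adjunction $b\mapsto\hat b$ and verify $\alpha\beta=\id$ by exhibiting $\hat b$ as a morphism of triples. This is more elementary and self-contained, at the cost of being specific to $\Vect_\F$ (or, with trivial changes, to any closed monoidal category). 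Your closing remark already identifies the coend route the paper takes, so the two approaches are really the same bijection viewed at different levels of abstraction.
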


In subsection \ref{subsec:semi-dualizable}, we will construct the map $\Phi$ and prove this lemma in the more general context where $X$ is a semi-dualizable object of a monoidal category. 

\begin{proof}[Proof of Theorem \ref{thm:vect}] 
Statement (1) follows immediately from the Lemma.
To prove part (2), it suffices to show that $\Phi$ is injective which is well known and elementary.

For the proof of part (3) let $f\colon X\to X$ be a thick morphism. We recall that its categorical trace $\tr (f)$ is defined by $\tr(f)=\wh \tr(\wh f)$ for a thickener $\wh f\in \wh \sC(X,X)$. So $\Psi(\wh f)=f$ and this is well-defined thanks to the injectivity of $\Psi$. Using that $\alpha$ is a bijection, there is a unique $ t\in \sC(\F,X\otimes X^*)$ with $\alpha(t)=[X^*,t,\ev]=\wh f$. We note that  $\wh\tr(\alpha(t))=\wh\tr([X^*, t,\ev])\in \sC(\F,\F)=\F$ is the composition
\[
\F\overset{ t}\ra X\otimes X^*\overset {s_{X,X^*}}\ra X^*\otimes X\overset{\ev}\ra\F
\]
In particular, if $t(1)=y\otimes g\in X\otimes X^*$, then 
\[
1\mapsto y\otimes g\mapsto g\otimes y\mapsto g(y)
\]
and hence $\tr(f)=g(y)=\classtr(f)$ is the classical trace of the rank  one operator $\Phi(t)=f$  given by $x\mapsto yg(x)$. Since both the categorical trace of $\alpha(t)$ and the classical trace of $\Phi(t)$ depend linearly on $t$, this finishes the proof.
\end{proof}

\begin{rem} We can replace the monoidal category of vector spaces by the monoidal category $\SVect$ of super vector spaces. The objects of this category are just $\Z/2$\nb-graded vector spaces with the usual tensor product $\otimes$. The grading involution on $X\otimes Y$ is the tensor product $\epsilon_X\otimes \epsilon_Y$ of the grading involutions on $X$ respectively\ $Y$. The switching isomorphism $X\otimes Y\cong Y\otimes X$ is given by $x\otimes y\mapsto (-1)^{|x||y|}y\otimes x$ for homogeneous elements $x\in X$, $y\in Y$ of degree $|x|,|y|\in\Z/2$. Then the statements above and their proof work for $\SVect$ as well, except that the categorical trace of a finite rank endomorphism $T\colon X\to X$ is its {\em super trace} $\str(T):=\classtr(\epsilon_XT)$.
\end{rem}

\subsection{Thick morphisms with semi-dualizable domain}\label{subsec:semi-dualizable}

The goal of this subsection is to generalize Lemma \ref{lem:Psi_Phi} from the category of vector spaces to general monoidal categories $\sC$, provided the object $X\in \sC$ satisfies the following condition.

\begin{defn}\label{def:semi-dualizable} An object $X$ of a monoidal category $\sC$ is {\em semi-dualizable} if the functor $\sC^{op}\to \Set$, $Z\mapsto \sC(Z\otimes X,\one)$ is representable, i.e., if there is an object $X^{\spcheck}\in \sC$ and natural bijections
\begin{equation}\label{eq:semi-dualizable}
\sC(Z,X^{\spcheck})\cong \sC(Z\otimes X,\one).
\end{equation}
By Yoneda's Lemma, the object $X^{\spcheck}$ is unique up to isomorphism. It is usually referred to as the {\em (left) internal hom} and denoted by $\underline C(X,\one)$. 
\end{defn}

To put this definition in context, we recall that a monoidal category $\sC$ is {\em closed} if for any $X\in \sC$ the functor $\sC\to\sC$, $Z\mapsto Z\otimes X$ has a right adjoint; i.e., if there is a functor
\[
\underline{\sC}(X,\quad)\colon \sC\ra \sC
\]
and natural bijections
\begin{equation}\label{eq:adjunction}
\sC(Z,\underline \sC(X,Y))\cong \sC(Z\otimes X,Y)
\qquad Z,X,Y\in\sC
\end{equation}
In particular, in a closed monoidal category {\em every} object $X$ is semi-dualizable with $X^{\spcheck}=\underline\sC(X,I)$. The category $\sC=\Vect_\F$ is an example of a closed monoidal category; for vector spaces $X$, $Y$ the internal hom $\underline \sC(X,Y)$ is the vector space of linear maps from $X$ to $Y$. Hence every vector space is semi-dualizable with semi-dual $X^{\spcheck}=\underline\sC(X,I)=X^*$. 

Other examples of closed monoidal categories is the category of Banach spaces (see subsection \ref{subsec:Ban}) and the category of {\em bornological vector spaces} \cite{Me}.
As also discussed in \cite[p.\ 9]{Me}, the symmetric monoidal category $\TV$ of   topological vector spaces with the projective tensor product is {\em not an example} of a closed monoidal category (no matter which topology on the space of continuous linear maps is used). Still, some topological vector spaces are semi-dualizable (e.g., Banach spaces), and so it seems preferable to state our results for semi-dualizable objects rather than objects in closed monoidal categories.

In Lemma \ref{lem:Psi_Phi} we will generalize a statement about the category $\Vect_\F$ to a statement about a general monoidal category $\sC$. To do so, we need to construct the maps $\Phi$ and $\alpha$ in the context of a general monoidal category. This is 
straightforward by using the {\em same definitions} as above, just making the following replacements:
\begin{enumerate}
\item Replace $\F$, the monoidal unit in $\Vect_\F$, by the monoidal unit $I\in \sC$.
\item Replace $X^*$, the vector space dual to $X$, by $X^{\spcheck}$, the semi-dual of $X\in \sC$. Here we need to assume that $X\in \sC$ is semi-dualizable which is automatic for any object of a closed category like $\Vect_\F$. 
\item For a semi-dualizable object $X\in \sC$, the {\em evaluation map} 
\begin{equation}\label{eq:evaluation}
\ev\colon X^{\spcheck}\otimes X\ra \one
\end{equation}
is by definition the morphism that corresponds to the identity morphism $X^{{\spcheck}}\to X^{\spcheck}$ under the bijection \eqref{eq:semi-dualizable} for $Z=X^{\spcheck}$. It is easy to check that this agrees with the usual evaluation map $X^*\otimes X\to \F$ for $\sC=\Vect_\F$.
\end{enumerate}
The map $\Phi\colon \sC(\one,Y\otimes X^{\spcheck})\ra \sC(X,Y)$ is given by the picture
\begin{equation}\label{eq:Phi}
\begin{tikzpicture}
[scale=.7,
big/.style={rectangle,rounded corners,fill=white,draw,minimum height=.3cm,minimum width=1cm},
small/.style={rectangle,rounded corners,fill=white,draw,minimum height=.3cm,minimum width=.3cm}]
\begin{scope}[shift={(0,0)}]
\begin{scope}[shift={(0,0)}]
\draw[thick](0,-.5)--(0,-1.5)node[below]{$Y^{\quad}$};
\draw[thick](1,-.5)--(1,-1.5)node[below]{$X^{\spcheck}$};
\node at (.5,-.5)[big]{$t$};
\end{scope}
\node at (2,-.5){$\longmapsto$};
\begin{scope}[shift={(3,0)}]
\draw[thick](0,-.5)--(0,-2.5)node[below]{$Y$};
\draw[thick](1,-.5)--(1,-2);
\node at(1.4,-1.25){$X^{\spcheck}$};
\draw[thick](2,-2)--(2,.5)node[above]{$X$};
\node at (.5,-.5)[big]{$t$};
\node at (1.5,-2)[big]{$\ev$};
\end{scope}
\end{scope}
\end{tikzpicture}
\end{equation}

The following Lemma is a generalization of Lemma \ref{lem:Psi_Phi_vect}. 

\begin{lem}\label{lem:Psi_Phi}
Let $X$ be a semi-dualizable object of a monoidal category $\sC$. Then for any object $Y\in \sC$ the map 
\begin{equation}\label{eq:alpha}
\alpha\colon \sC(\one,Y\otimes X^{\spcheck})\ra\wh\sC(X,Y)
\qquad
t\mapsto [X^{\spcheck},t,\ev]
\end{equation}
is a natural bijection which makes the diagram
\[
\xymatrix{
\sC(\one,Y\otimes X^{\spcheck})\ar[dr]_{\Phi}\ar[rr]_\alpha^\cong&&\wh\sC(X,Y)\ar[dl]^\Psi\\
&\sC(X,Y)&
}
\]
commutative. In particular, a morphism $f\colon X\to Y$ is thick \Iff it is in the image of the map $\Phi$ (see Equation \eqref{eq:Phi}). 
\end{lem}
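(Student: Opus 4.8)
The plan is to unwind the definition of $\wh\sC(X,Y)$ as a coend and exhibit $\alpha$ as the canonical map from the $Z=X^{\spcheck}$ summand, then show it is inverted by the universal property of the representing object $X^{\spcheck}$. First I would observe that a triple $(Z,t,b)$ with $b\colon Z\otimes X\to\one$ is, via the defining bijection \eqref{eq:semi-dualizable}, the same datum as a morphism $\bar b\colon Z\to X^{\spcheck}$ together with $t\colon\one\to Y\otimes Z$; and this $\bar b$ is precisely a morphism in $\sC$ from the triple $(Z,t,b)$ to the triple $(X^{\spcheck},(\id_Y\otimes\bar b)\circ t,\ev)$ in the sense of Definition \ref{def:thickened}, because the two equations \eqref{eq:equivalence} to be checked say exactly (i) that $(\id_Y\otimes\bar b)\circ t$ is the pushforward of $t$ along $\bar b$, which holds by construction, and (ii) that $b=\ev\circ(\bar b\otimes\id_X)$, which is the naturality of the bijection \eqref{eq:semi-dualizable} applied to $\bar b\colon Z\to X^{\spcheck}$ together with the definition \eqref{eq:evaluation} of $\ev$ as the morphism corresponding to $\id_{X^{\spcheck}}$. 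Hence every triple is equivalent to one of the form $(X^{\spcheck},t',\ev)$, which shows $\alpha$ is surjective.

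Next I would check injectivity of $\alpha$. Suppose $\alpha(t_1)=\alpha(t_2)$, i.e.\ $(X^{\spcheck},t_1,\ev)$ and $(X^{\spcheck},t_2,\ev)$ are connected by a zig-zag of morphisms $g_i$ in the category of triples. Here I would use the coend description \eqref{eq:coend}: in the coequalizer presentation, two elements of $\coprod_Z S(Z,Z)$ become equal in $\wh\sC(X,Y)$ iff they are identified by the relation generated by $(Z,t,b'\circ g)\sim(Z',g\circ t,b')$ for $g\colon Z\to Z'$. Restricting attention to the $X^{\spcheck}$-component, a morphism $g\colon X^{\spcheck}\to X^{\spcheck}$ that is compatible with $\ev$ on both sides — i.e.\ $\ev\circ(g\otimes\id_X)=\ev$ — must be $\id_{X^{\spcheck}}$ by the Yoneda/uniqueness part of Definition \ref{def:semi-dualizable} (such a $g$ corresponds under \eqref{eq:semi-dualizable} to the morphism $\ev$ itself, hence equals $\id$). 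Running this through any zig-zag forces $t_1=t_2$. Thus $\alpha$ is a bijection; naturality in $Y$ is routine from functoriality of everything in sight and I would leave it to the reader or dispatch it in a line.

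Finally, for commutativity of the triangle I would simply compare pictures: $\Psi\circ\alpha(t)=\Psi(X^{\spcheck},t,\ev)$ is by Lemma \ref{lem:Psi_welldefined} the composite $X\cong\one\otimes X\xrightarrow{t\otimes\id}Y\otimes X^{\spcheck}\otimes X\xrightarrow{\id\otimes\ev}Y\otimes\one\cong Y$, which is exactly the defining formula \eqref{eq:Phi} for $\Phi(t)$. The last sentence of the lemma, that $f$ is thick iff $f\in\operatorname{im}\Phi$, then follows because $\operatorname{im}\Psi=\sC^{tk}(X,Y)$ by the remark after Lemma \ref{lem:Psi_welldefined} and $\operatorname{im}\Phi=\operatorname{im}(\Psi\circ\alpha)=\operatorname{im}\Psi$ since $\alpha$ is onto.

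The main obstacle I anticipate is the injectivity argument: one must be careful that a morphism of triples $g\colon(Z,t,b)\to(Z',t',b')$ need not be invertible, so the equivalence relation is generated by a genuine zig-zag rather than by isomorphisms, and one has to argue that every such zig-zag can be ``straightened'' to land in, and be controlled by, the canonical representative over $X^{\spcheck}$. The coend formulation makes this clean — the point is precisely that $\coprod_Z S(Z,Z)\to\wh\sC(X,Y)$ factors through the $X^{\spcheck}$-summand with the reindexing maps, and the representability hypothesis says this summand already sees everything with no further collapsing — but phrasing it rigorously without just re-deriving the coequalizer is the step that needs the most care.
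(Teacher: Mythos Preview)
Your approach is correct in outline and is a hands-on unpacking of what the paper does abstractly. The paper factors $\alpha$ as
\[
\sC(\one,Y\otimes X^{\spcheck})\;\overset{\cong}\longrightarrow\;\int^{Z}\sC(\one,Y\otimes Z)\times\sC(Z,X^{\spcheck})\;\overset{\cong}\longrightarrow\;\int^{Z}\sC(\one,Y\otimes Z)\times\sC(Z\otimes X,\one)
\]
and invokes the coend form of the Yoneda lemma for the first arrow and the representing bijection \eqref{eq:semi-dualizable} for the second; your surjectivity paragraph is exactly the explicit content of this composite read backwards.

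The gap is in your injectivity argument. You only analyze morphisms $g\colon X^{\spcheck}\to X^{\spcheck}$ in the category of triples, but a zig-zag connecting $(X^{\spcheck},t_1,\ev)$ to $(X^{\spcheck},t_2,\ev)$ need not stay over $X^{\spcheck}$: it may wander through arbitrary objects $Z$, and the single-step observation ``$\ev\circ(g\otimes\id_X)=\ev$ forces $g=\id$'' does not by itself control such a zig-zag. You flag this yourself at the end, and you are right that it is the one step needing care.

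The cleanest repair, still in your hands-on style, is to turn your surjectivity construction into an explicit inverse. Define
\[
\beta\colon \wh\sC(X,Y)\longrightarrow \sC(\one,Y\otimes X^{\spcheck}),\qquad [Z,t,b]\longmapsto (\id_Y\otimes\bar b)\circ t,
\]
where $\bar b\colon Z\to X^{\spcheck}$ corresponds to $b$ under \eqref{eq:semi-dualizable}. This is well-defined on the coend: if $g\colon Z\to Z'$ is a morphism of triples from $(Z,t,b)$ to $(Z',t',b')$, then $t'=(\id_Y\otimes g)\circ t$ and $b=b'\circ(g\otimes\id_X)$, whence by naturality of \eqref{eq:semi-dualizable} one has $\bar b=\bar{b'}\circ g$, and so $(\id_Y\otimes\bar{b'})\circ t'=(\id_Y\otimes\bar b)\circ t$. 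Now $\beta\circ\alpha=\id$ since $\overline{\ev}=\id_{X^{\spcheck}}$, and $\alpha\circ\beta=\id$ is precisely your surjectivity paragraph. This is exactly what the paper's Yoneda invocation packages abstractly; either route is fine, but the zig-zag argument as you wrote it does not stand on its own.
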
 

\begin{proof}
The commutativity of the diagram is clear by comparing the definitions of the maps $\Phi$ (see Equation \eqref{eq:Phi}), $\alpha$ (Equation \eqref{eq:alpha}), and $\Psi$ (Equation \eqref{eq:Psi}). 

To see that $\alpha$ is a bijection we factor it in the following form
\begin{multline}\label{eq:proof_Psi_Phi}
\sC(I,Y\otimes X^{\spcheck})\overset\cong\ra \int^{Z\in \sC}\sC(I,Y\otimes Z)\times\sC(Z,X^{\spcheck})\\
\overset\cong\ra\int^{Z\in \sC}\sC(I,Y\otimes Z)\times\sC(Z\otimes X,\one) 
\end{multline}
Here the first map sends $t\in \sC(I,Y\otimes X^{\spcheck})$ to $[X^{\spcheck},t,\id_{X^{\spcheck}}]$. This map is a bijection by the {\em coend form of the Yoneda Lemma} according to which for any functor $F\colon \sC\to\Set$ the map
\begin{equation}\label{eq:Yoneda}
F(W)\ra \int^{Z\in\sC}F(Z)\times\sC(Z,W)
\qquad
t\mapsto [W,t,\id_W]
\end{equation}
is a bijection (see \cite[Equation 3.72]{Ke}. The second map of Equation \eqref{eq:proof_Psi_Phi} is induced by the bijection $\sC(Z,X^{\spcheck})\cong \sC(Z\otimes X,\one)$. By construction the identity map $\id_{X^{\spcheck}}$ corresponds to the evaluation map via this bijection, and hence the composition of these two maps is $\alpha$. 
\end{proof}

\begin{rem} The referee observed that there is a neat interpretation of $\wh\sC(X,Y)$ in terms of the Yoneda embedding
\[
\sC\ra \sF:=\Fun(\sC^{\op},\Set)
\qquad
X\mapsto \sC(-,X)
\]
The monoidal structure $\otimes$ on $\sC$ induces a monoidal structure $*$ on $\sF$, the {\em convolution tensor product} \cite{Day}, defined by 
\[
(M*N)(S):=\int^{V,W}\sC(S,V\otimes W)\times M(V)\times N(W)
\]
for an object $S\in \sC$. Equipped with the convolution product, the functor category $\sF$ is a {\em closed monoidal category} (see Equation \eqref{eq:adjunction}) with internal left hom $\underline \sF(N,L)\in \sF$ given by
\[
\underline \sF(N,L)(S)=\sF(N(-),L(S\otimes-))
\]
We can regard $\sC$ as a full monoidal subcategory of $\sF$ via the Yoneda embedding. In particular, every object $X\in \sC$ has a left semi-dual $X^{\spcheck}=\underline\sF(X,-)\in \sF$ and hence by the previous lemma we have a bijection $\wh\sF(X,Y)\cong\sF(\one,Y*X^{\spcheck})$. Explicitly, the semi-dual $X^{\spcheck}$ is given by 
\[
X^{\spcheck}(S)=\sF(\sC(-,X),\sC(S\otimes-,\one))=\sC(S\otimes X,\one)
\]

The referee observed that the Yoneda embedding induces a bijection
\begin{equation}\label{eq:thick_via_Yoneda}
\wh\sC(X,Y)\ra\wh\sF(X,Y)\cong\sF(\one,Y*X^{\spcheck}).
\end{equation}
In particular, morphism $f\in\sC(X,Y)$ is thick \Iff its image under the Yoneda embedding is thick. 
To see that the above map is a bijection, we evaluate the right hand side
\begin{align*}
\sF(\one,Y*X^{\spcheck})&=(Y*X^{\spcheck})(\one)=
\int^{V,W}\sC(\one,V\otimes W)\times\sC(V,Y)\times X^{\spcheck}(W)\\
&=\int^{W}\sC(\one,Y\otimes W)\times X^{\spcheck}(W)
=\int^{W}\sC(\one,Y\otimes W)\times \sC(W\otimes X,\one)
\end{align*}
which we recognize as the coend description of $\wh\sC(X,Y)$ (Equation \eqref{eq:coend}). The second equality is a consequence of (the coend form of) the Yoneda lemma (equation \eqref{eq:Yoneda}).
\end{rem}


\subsection{Thick morphisms with dualizable domain}
As mentioned in the introduction, there is a well-known trace for endomophisms of dualizable objects in a monoidal category (see e.g.\ \cite[\S 3]{JSV}). After recalling the definition of dualizable and the construction of that classical trace, we show in Theorem \ref{thm:dualizable} that our categorical trace is a generalization.
 
\begin{defn}\cite[Def.\ 7.1]{JS2}\label{def:dualizable} An object $X$ of a monoidal category $\sC$ is {\em (left) dualizable} if there is an object $X^{\spcheck}\in \sC$ (called the (left) dual of $X$) and 
morphisms $\ev\colon X^{\spcheck}\otimes X\to \one$ (called {\em evaluation map}) and $\coev\colon \one\to X\otimes X^{\spcheck}$ ({\em coevaluation map}) such that the following equations hold.
\begin{equation}\label{eq:dualizing_equations}
\begin{tikzpicture}
	\draw[thick] (0,0) -- +(0,-2) node[left]{$X$};
	\draw[thick] (1,0)  -- node[right]{$X^{\spcheck}$} +(0,-1.5) ;
	\draw[thick] (2,-1.5) -- +(0,2.5) node[right]{$X$} ;
\node[rectangle,rounded corners,fill=white,draw,minimum height=.5cm,minimum width=1.5cm] at (.5,0){$\coev$};
\node[rectangle,rounded corners,fill=white,draw,minimum height=.5cm,minimum width=1.5cm] at (1.5,-1.5){$\ev$};
\node at(3,-.5){$=$};
	\draw[thick]  (4,1) node[left]{$X$} to (4,-2) node[left]{$X$};
\node[rectangle,rounded corners,fill=white,draw,minimum height=.5cm,minimum width=.5cm] at (4,-.5){$\id_X$};
\end{tikzpicture}
\qquad\qquad
\begin{tikzpicture}
	\draw[thick] (2,.5)  -- +(0,-2.5) node[right]{$X^{\spcheck}$};
	\draw[thick] (1,.5)  -- node[right]{$X$} +(0,-1.5) ;
	\draw[thick] (0,-1)  --+(0,2) node[left]{$X^{\spcheck}$} ;
\node[rectangle,rounded corners,fill=white,draw,minimum height=.5cm,minimum width=1.5cm] at (1.5,.5){$\coev$};
\node[rectangle,rounded corners,fill=white,draw,minimum height=.5cm,minimum width=1.5cm] at (.5,-1){$\ev$};
\node at(3,-.5){$=$};
	\draw[thick]  (4,1) node[left]{$X^{\spcheck}$} to (4,-2) node[left]{$X^{\spcheck}$};
\node[rectangle,rounded corners,fill=white,draw,minimum height=.5cm,minimum width=.5cm] at (4,-.5){$\id_{X^{\spcheck}}$};
\end{tikzpicture}
\end{equation}
If $X$ is dualizable with dual $X^{\spcheck}$, then there is a family of bijections 
\begin{equation}\label{eq:dualizable}
\sC(Z,Y\otimes X^{\spcheck})\overset{\cong}\ra \sC(Z\otimes X,Y),
\end{equation}
 natural in $Y,Z\in \sC$, given by 
\[
\begin{tikzpicture}
[scale=.7,
big/.style={rectangle,rounded corners,fill=white,draw,minimum height=.3cm,minimum width=1cm},
small/.style={rectangle,rounded corners,fill=white,draw,minimum height=.3cm,minimum width=.3cm}]
\begin{scope}[shift={(0,0)}]
\draw[thick](0,-.5)--(0,-1.5)node[below]{$Y^{\quad}$};
\draw[thick](1,-.5)--(1,-1.5)node[below]{$X^{\spcheck}$};
\draw[thick](.5,-.5)--(.5,.5)node[above]{$Z$};
\node at (.5,-.5)[big]{$f$};
\end{scope}
\node at (2,-.5){$\longmapsto$};
\begin{scope}[shift={(3,0)}]
\draw[thick](0,-.5)--(0,-2.5)node[below]{$Y$};
\draw[thick](1,-.5)--(1,-2);
\node at(1.4,-1.25){$X^{\spcheck}$};
\draw[thick](2,-2)--(2,.5)node[above]{$X$};
\draw[thick](.5,-.5)--(.5,.5)node[above]{$Z$};
\node at (.5,-.5)[big]{$f$};
\node at (1.5,-2)[big]{$\ev$};
\end{scope}
\node at (7,-.5){$\text{with inverse}$};
\begin{scope}[shift={(9,0)}]
\draw[thick](.5,-2)--(.5,-3)node[below]{$Y$};
\draw[thick](2,-.5)--(2,-3)node[below]{$X^{\spcheck}$};
\draw[thick](1,-2)--(1,-.5);
\node at (.7,-1.25){$X$};
\draw[thick](0,-2)--(0,.5)node[above]{$Z$};
\node at (.5,-2)[big]{$g$};
\node at (1.5,-.5)[big]{$\coev$};
\end{scope}
\node at (12,-.5){$\longmapsfrom$};
\begin{scope}[shift={(13,0)}]
\draw[thick](.5,-.5)--(.5,-1.5)node[below]{$Y$};
\draw[thick](1,-.5)--(1,.5) node[above]{$X$};
\draw[thick](0,-.5)--(0,.5)node[above]{$Z$};
\node at (.5,-.5)[big]{$g$};
\end{scope}
\end{tikzpicture}
\] 
In particular, a dualizable object is semi-dualizable in the sense of Definition \ref{def:semi-dualizable}. 
\end{defn}

\begin{ex}\label{ex:dualizable_vectorspace} A finite dimensional vector space $X$ is a dualizable object in the category $\Vect_\F$:  we take $X^{\spcheck}$ to be the vector space dual to $X$, $\ev$ to be the usual evaluation map, and define 
\[
\coev\colon \F\ra X\otimes X^{\spcheck}\qquad\text{by}\qquad
1\mapsto \sum_i e_i\otimes e^i
\]
where $\{e_i\}$ is a basis of $X$ and $\{e^i\}$ is the dual basis of $X^{\spcheck}$. It is not hard to show that a vector space $X$ is dualizable \Iff it is finite dimensional.
\end{ex}

\begin{defn}\label{def:classtr} Let $\sC$ be a monoidal category with switching isomorphisms. Let $f\colon X\to X$ be an endomorphism of a dualizable object $X\in \sC$. Then the {\em classical trace} $\classtr(f)\in\sC(\one,\one)$ is defined by
\[
\begin{tikzpicture}
[scale=.7,
big/.style={rectangle,rounded corners,fill=white,draw,minimum height=.3cm,minimum width=1cm},
small/.style={rectangle,rounded corners,fill=white,draw,minimum height=.3cm,minimum width=.3cm}]
\node at (0,1){$\classtr(f):=$};
\begin{scope}[shift={(4,0)}]
\draw[thick, rounded corners](0,3.5)--(0,.5)--(1,-.5)--(1,-1.5);
\draw[thick, rounded corners](1,3.5)--(1,.5)--(0,-.5)--(0,-1.5);
\node[big] at (.5,3.5){$coev$};
\node[big] at (.5,-1.5){$ev$};
\node[small] at (0,1.5){$f$};
\node at (0,2.5)[left]{$X$};
\node at (1,2.5)[right]{$X^{\spcheck}$};
\node at (0,-.5)[left]{$X^{\spcheck}$};
\node at (1,-.5)[right]{$X$};
\end{scope}
\end{tikzpicture}
\]
This definition can be found for example in section 3 of \cite{JSV} for balanced monoidal categories (see Definition \ref{def:balanced} for the definition of a balanced monoidal category and how the switching isomorphism is determined by the braiding and the twist of the balanced monoidal category). In fact, the construction in \cite{JSV} is more general: they associate to a morphism $f\colon 
A\otimes X\to B\otimes X$ a trace in $\sC(A,B)$ if $X$ is dualizable; specializing to $A=B=\one$ gives the {\em classical trace} described above. 
\end{defn}

\begin{thm}\label{thm:dualizable} Let $\sC$ be a monoidal category with switching isomorphisms, and let  $X$ be a dualizable object of $\sC$. Then
\begin{enumerate}
\item The map $\Psi\colon \wh \sC(X,Y)\to \sC(X,Y)$ is a bijection. In particular, any morphism with domain $X$ is thick, and any endomorphism $f\colon X\to X$ has a well-defined categorical trace $\tr(f)\in\sC(\one,\one)$.
\item The categorical trace of $f$ is equal to its classical trace $\classtr(f)$ defined above.
\end{enumerate}
\end{thm}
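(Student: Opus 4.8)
The plan is to derive both statements from Lemma \ref{lem:Psi_Phi} together with the defining adjunction property of a dualizable object. Since $X$ is dualizable it is in particular semi-dualizable, with semi-dual $X^{\spcheck}$ equal to the dual of $X$ and evaluation map $\ev\colon X^{\spcheck}\otimes X\to\one$ the one from Definition \ref{def:dualizable}; this identification is exactly the content of the last sentence of that definition. Hence Lemma \ref{lem:Psi_Phi} applies and gives a commutative triangle in which $\alpha\colon\sC(\one,Y\otimes X^{\spcheck})\to\wh\sC(X,Y)$ is a bijection and $\Psi\circ\alpha=\Phi$. So to prove part (1) it suffices to show that the map $\Phi\colon\sC(\one,Y\otimes X^{\spcheck})\to\sC(X,Y)$ of \eqref{eq:Phi} is a bijection. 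But for a \emph{dualizable} object the bijection \eqref{eq:dualizable} with $Z=\one$ reads $\sC(\one,Y\otimes X^{\spcheck})\cong\sC(X,Y)$ (using the unit constraints $\one\otimes X\cong X$), and unwinding the formula for that bijection in Definition \ref{def:dualizable} shows it is literally given by the picture \eqref{eq:Phi}, i.e.\ it \emph{is} $\Phi$. Therefore $\Phi$, hence $\Psi$, is a bijection, every morphism out of $X$ is thick, and $\tr(f):=\wh\tr(\Psi^{-1}(f))$ is well-defined.

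For part (2) I would compute $\tr(f)=\wh\tr(\Psi^{-1}(f))$ explicitly and match it against the formula for $\classtr(f)$ in Definition \ref{def:classtr}. Write $f=\Phi(t)=\Psi(\alpha(t))$ for the unique $t\in\sC(\one,X\otimes X^{\spcheck})$ corresponding to $f$ under \eqref{eq:dualizable}; concretely, inverting that bijection, $t=(\id_X\otimes\,? )$ — more precisely $t$ is the composite $\one\xrightarrow{\coev}X\otimes X^{\spcheck}\xrightarrow{f\otimes\id}X\otimes X^{\spcheck}$, as one reads off from the "inverse" picture in Definition \ref{def:dualizable} with $Z=Y=\one$, $g=f$. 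Then by definition $\tr(f)=\wh\tr(X^{\spcheck},t,\ev)=\ev\circ s_{X,X^{\spcheck}}\circ t$. Substituting the expression for $t$ gives $\tr(f)=\ev\circ s_{X,X^{\spcheck}}\circ(f\otimes\id_{X^{\spcheck}})\circ\coev$, and by naturality of $s$ (equation \eqref{eq:s_naturality}) this equals $\ev\circ(\id_{X^{\spcheck}}\otimes f)\circ s_{X,X^{\spcheck}}\circ\coev$. Comparing with the picture defining $\classtr(f)$ — which is $\ev\circ s_{X,X^{\spcheck}}\circ(\id_X\otimes\text{(slide $f$)})\circ\coev$ read top to bottom — one sees the two composites agree; the bookkeeping is just moving the box $f$ across the crossing $s_{X,X^{\spcheck}}$ using naturality, which is allowed for an arbitrary switching family and does not require the braid relations.

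The only genuine subtlety, and the step I expect to demand the most care, is the bookkeeping with the unit constraints $\ell,r$ and the precise placement of $f$ relative to the crossing in the two definitions: Definition \ref{def:classtr} draws $f$ on the left strand \emph{below} $\coev$ and \emph{above} the crossing, whereas after unwinding $\alpha$ the box $f$ naturally sits composed with $\coev$ before the crossing, so one genuinely needs \eqref{eq:s_naturality} once to reconcile them. I would present this as a short string of labelled string-diagram equalities rather than a coordinate computation. No step here uses anything beyond Lemma \ref{lem:Psi_Phi}, the adjunction \eqref{eq:dualizable} specialized to $Z=Y=\one$, and naturality of $s$, so the argument is short once the diagrammatic identifications are set up correctly.
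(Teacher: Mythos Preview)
Your proof is correct and follows the same route as the paper: both parts reduce to Lemma \ref{lem:Psi_Phi}, identifying $\Phi$ with the adjunction bijection \eqref{eq:dualizable} at $Z=\one$, and then reading off $\Psi^{-1}(f)=[X^{\spcheck},(f\otimes\id_{X^{\spcheck}})\circ\coev,\ev]$. One small overcomplication: the naturality step you flag as ``genuinely needed'' is in fact unnecessary, since in the picture for $\classtr(f)$ the box $f$ already sits on the $X$-strand \emph{above} the crossing, so $\classtr(f)=\ev\circ s_{X,X^{\spcheck}}\circ(f\otimes\id_{X^{\spcheck}})\circ\coev$ matches your expression for $\wh\tr(X^{\spcheck},t,\ev)$ on the nose.
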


\begin{rem}
Part (1) of the Theorem implies in particular that if $X$ is (left) dualizable, then the identity $\id_X$ is thick. 
The referee observed that the converse holds as well. To see this, assume that $\id_X$ is thick. Then $\id_X$ is in the image of $\Psi\colon \wh\sC(X,X)\to \sC(X,X)$ and hence by Lemma \ref{lem:Psi_Phi} in the image of $\Phi\colon \sC(\one,X\otimes X^{\spcheck})\to \sC(X,X)$. If $\coev\in \sC(\one,X\otimes X^{\spcheck})$ belongs to the preimage $\Phi^{-1}(\id_{X})$, then it is straightforward to check that Equations \eqref{eq:Phi} hold and hence $X$ is dualizable. The first equation holds by construction of $\Phi$; to check the second equation, we apply the bijection \eqref{eq:semi-dualizable} (for $Z=X^{\spcheck}$) to both sides and obtain $\ev$ for both.
\end{rem}

\begin{proof}
To prove part (1), it suffices by Lemma \ref{lem:Psi_Phi} to show that the map $\Phi\colon \sC(\one,Y\otimes X^{\spcheck})\to \sC(X,Y)$ (see Equation \eqref{eq:Phi}) is a bijection. Comparing $\Phi$ with the natural bijection \eqref{eq:dualizable} for dualizable objects $X\in \sC$, we see that this bijection is equal to $\Phi$ in the special case $Z=\one$.

To prove part (2) we recall that by definition 
$\tr(f)=\wh\tr(\wh f)\in \sC(\one,\one)$ for any $\wh f\in \Psi^{-1}(f)\subset \wh \sC(X,X)$ (see Equation \eqref{eq:trace_diagram}). In the situation at hand, $\Psi$ is invertible by part (1), and using the factorization $\Psi=\Phi\circ \alpha^{-1}$ provided by Lemma \ref{lem:Psi_Phi}, we have
\[
\wh f=\Psi^{-1}(f)=\alpha\Phi^{-1}(f)=\alpha((f\otimes\id_{X^{\spcheck}})\circ\coev)=
[X^{\spcheck},(f\otimes\id_{X^{\spcheck}})\circ\coev,\ev]
\]
Here the second equality follows from the explicit form of the inverse of $\Phi$ (which agrees with the map \eqref{eq:dualizable} for $Z=\one$). Comparing $\wh\tr(\wh f)$ (Equation \eqref{eq:whtr2}) and $\classtr(f)$ (Definition \ref{def:classtr}), we see 
\[
\tr(f)=\wh\tr(X^{\spcheck},(f\otimes \id_{X^{\spcheck}})\circ\coev,\ev)=\classtr(f)
\]
\end{proof}
\subsection{The Category $\Ban$ of Banach spaces}\label{subsec:Ban}
Let $X$ be a Banach space and $f\colon X\to X$ a continuous linear map. There are classical conditions ($f$ is {\em nuclear} and $X$ has the {\em approximation property}, see Definition \ref{def:nuclear_Ban}) which guarantee that $f$ has a well-defined (classical) trace which we again denote by $\classtr(f)\in \C$. For example any Hilbert space $H$ has the approximation property and a continuous linear map $f\colon H\to H$ is nuclear \Iff it is trace class. The main result of this subsection is Theorem \ref{thm:Ban} which shows that these classical conditions imply that $f$ has a well-defined categorical trace and that the categorical trace of $f$ agrees with its classical trace. 
Before stating this result, we review the (projective) tensor product of Banach spaces and define the notions {\em nuclear} and {\em approximation property}.

The category $\Ban$ of Banach spaces is a monoidal category whose monoidal structure is given by the {\em projective tensor product}, defined as follows. For  Banach spaces $X$, $Y$, the {\em projective norm} on the algebraic tensor product $X\otimes_{alg} Y$ is given by 
\[
||z||:=\inf\{\sum ||x_i||\cdot ||y_i||\mid z=\sum x_i\otimes y_i\in X\otimes Y\}
\]
where the infimum is taken over all ways of expressing $z\in X\otimes_{alg} Y$ as a finite sum of elementary tensors. Then the {\em projective tensor product} $X\otimes Y$ is defined to be the completion of $X\otimes_{alg} Y$ with respect to the projective norm. 

It is well-known that $\Ban$ is a closed monoidal category (see Equation \eqref{eq:adjunction}). For Banach spaces $X$, $Y$ the internal hom space $\underline {\Ban}(X,Y)$ is the Banach space of continuous linear maps $T\colon X\to Y$ equipped with the operator norm $||T||:=\sup{x\in X,||x||=1}||T(x)||$. In particular, every Banach space has a left semi-dual $X^{\spcheck}=\underline{\Ban}(X,I)$ in the sense of Definition \ref{def:dualizable} which is just the Banach space of continuous linear maps $X\to\C$. The categorically defined evaluation map $\ev\colon X^{\spcheck}\otimes X\to \C$ (see Equation\eqref{eq:evaluation}) agrees with the usual evaluation map defined by $f\otimes x\mapsto \wh f(x)$. The map
\[
Y\otimes X^{\spcheck}\cong \Ban(\C,Y\otimes X^{\spcheck})\overset{\Phi}\ra \Ban(X,Y)
\]
(see Equation \eqref{eq:Phi}) is determined by sending $y\in f\in Y\otimes X^{\spcheck}$ to the map $x\mapsto yf(x)$ (see the discussion after Theorem \ref{thm:vect}).

We note that the morphism set $\Ban(X\otimes Y,Z)$ is in bijective correspondence to the  continuous bilinear maps $X\times Y\to Z$. This bijection is given by sending $g\colon X\otimes Y\to Z$ to the composition $g\circ \chi$, where $\chi\colon X\times Y\to X\otimes Y$ is given by $(x,y)\mapsto x\otimes y$. In particular, if $X^{\spcheck}$ is the Banach space dual to $X$, we have a morphism
\begin{equation}\label{eq:ev}
\ev\colon X^{\spcheck}\otimes X\ra \C
\qquad\text{determined by}\qquad
g\otimes x\mapsto g(x)
\end{equation}
called the {\em evaluation map}.

\begin{defn}\cite[Ch.\ III, \S 7]{Sch}\label{def:nuclear_Ban}  A continuous linear map between Banach spaces is {\em nuclear} if $f$ is in the image of the map 
\[
\Phi\colon Y\otimes X^{\spcheck}\ra \Ban(X,Y)
\qquad\text{determined by}\qquad
y\otimes g\mapsto(x\mapsto y g(x))
\]
A Banach space $X$ has the {\em approximation property} if the identity of $X$ can be approximated by finite rank operators with respect to the compact-open topology.
\end{defn}

\begin{thm}\label{thm:Ban}
Let $X,Y\in \Ban$. 
\begin{enumerate}
\item A morphism $f\colon X\to Y$ is thick \Iff it is  nuclear.
\item If $X$ has the approximation property, it has the trace property.
\item If $X$ has the approximation property and $f\colon X\to X$ is nuclear, then the categorical trace of $f$ agrees with its classical trace. 
\end{enumerate}
\end{thm}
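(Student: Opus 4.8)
The plan is to deduce all three statements from Lemma~\ref{lem:Psi_Phi} together with the classical fact that the approximation property of a Banach space is equivalent to the injectivity of the map $\Phi$. Part~(1) is immediate: since $\Ban$ is a closed monoidal category, every object $X$ is semi-dualizable (Definition~\ref{def:semi-dualizable}) with $X^{\spcheck}$ the dual Banach space and $\ev\colon X^{\spcheck}\otimes X\to\C$ the usual evaluation, so by Lemma~\ref{lem:Psi_Phi} a morphism $f\colon X\to Y$ is thick if and only if it lies in the image of $\Phi\colon Y\otimes X^{\spcheck}\cong\Ban(\C,Y\otimes X^{\spcheck})\to\Ban(X,Y)$, which by Definition~\ref{def:nuclear_Ban} is exactly the set of nuclear operators.

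For parts~(2) and~(3) I would first transport the diagram~\eqref{eq:trace_diagram} for $Y=X$ to the projective tensor product $X\otimes X^{\spcheck}$ via the bijection $\alpha$ of Lemma~\ref{lem:Psi_Phi} and the identification $\Ban(\C,X\otimes X^{\spcheck})\cong X\otimes X^{\spcheck}$, $t\mapsto t(1)$. Under this transport $\Psi$ becomes $\Phi$ (by Lemma~\ref{lem:Psi_Phi}), while $\wh\tr$ becomes the bounded linear functional $\tau\colon X\otimes X^{\spcheck}\to\C$ determined on elementary tensors by $y\otimes g\mapsto g(y)$: indeed $\wh\tr(\alpha(t))=\ev\circ s_{X,X^{\spcheck}}\circ t$, the switching isomorphism of $\Ban$ sends $y\otimes g$ to $g\otimes y$, and $\ev$ turns $g\otimes y$ into $g(y)$. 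Since $\Phi$ and $\tau$ are both linear, the trace property of $X$ --- that $\wh\tr$ be constant on the fibers of $\Psi$ --- is equivalent to the inclusion $\ker\Phi\subseteq\ker\tau$.

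Now invoke the classical result \cite[\S 43.2(7)]{Ko} that $X$ has the approximation property if and only if $\Phi$ is injective. If $X$ has the approximation property then $\ker\Phi=0$, so $\ker\Phi\subseteq\ker\tau$ holds trivially and $X$ has the trace property, proving~(2). For~(3), let $f\colon X\to X$ be nuclear; injectivity of $\Phi$ yields a unique $u\in X\otimes X^{\spcheck}$ with $\Phi(u)=f$, so $\alpha(u)$ is the unique thickener of $f$ and, by the definition of the categorical trace, $\tr(f)=\wh\tr(\alpha(u))=\tau(u)$. On the other hand, writing $u=\sum_i y_i\otimes g_i$ with $\sum_i\|y_i\|\,\|g_i\|<\infty$ exhibits $f=\Phi(u)$ as a nuclear operator whose classical trace is by definition $\sum_i g_i(y_i)=\tau(u)$, this number being independent of the nuclear representation precisely because the approximation property forces the preimage $\Phi^{-1}(f)$ to be the single element $u$. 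Hence $\tr(f)=\tau(u)=\classtr(f)$.

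The analytic content of the theorem is entirely contained in the cited equivalence ``approximation property $\Leftrightarrow$ injectivity of $\Phi$'' (equivalently, well-definedness of the trace functional on nuclear operators), and so this is where I expect the real difficulty to lie if one wanted a self-contained argument. Everything else is bookkeeping; the one categorical point requiring care is the identification in the second paragraph, namely that transporting $\wh\tr$ along $\alpha$ genuinely produces the trace functional $\tau$ --- this uses the explicit form of the switching isomorphism on $\Ban$ and the fact that the categorically defined $\ev$ of Definition~\ref{def:semi-dualizable} coincides with the analytic evaluation pairing.
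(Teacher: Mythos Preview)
Your proof is correct, and part~(1) matches the paper exactly. For parts~(2) and~(3), however, you take a genuinely different route from the paper. The paper defers these to the more general $\TV$ setting (Theorem~\ref{thm:TV}), where it proves a continuity lemma (Lemma~\ref{lem:continuity}) showing that $g\mapsto\wh\tr(\wh f\circ g)$ is continuous for the compact-open topology, and then runs an approximation argument with a net $I_\nu$ of finite-rank operators converging to $\id_X$. You instead transport the diagram~\eqref{eq:trace_diagram} along the bijection $\alpha$ of Lemma~\ref{lem:Psi_Phi}, reduce the trace property to the linear-algebraic condition $\ker\Phi\subseteq\ker\tau$, and then invoke the classical equivalence ``approximation property $\Leftrightarrow$ $\Phi$ injective'' from \cite[\S 43.2(7)]{Ko} (which the paper cites in its introduction but does not use in the proof). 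Your approach is shorter and more conceptual for the Banach case, packaging all the analysis into one citation; the paper's approach is more self-contained and, crucially, generalizes to $\TV$, where not every object is semi-dualizable and no such clean bijection $\alpha$ is available. One small point: the ``classical trace'' in the paper is \emph{defined} (in the $\TV$ section) as the limit $\lim_\nu\classtr(f\circ I_\nu)$, whereas you take it to be $\sum_i g_i(y_i)$; these agree for Banach spaces with the approximation property, and your identification of both with $\tau(u)$ makes this transparent, but it is worth noting that the two definitions are not literally the same starting point.
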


\begin{proof}
This result holds more generally in the category $\TV$  which we will prove in the following subsection (see Theorem \ref{thm:TV}). For the proofs of statements (2) and (3) we refer to that section. For the proof of statement (1) we recall that a morphism $f\colon X\to Y$ in the category $\sC=\Ban$ is thick \Iff if is in the image of the map 
$\Psi\colon \wh \sC(X,Y)\to \sC(X,Y)$ 
and that it is {\em nuclear} if it is in the image of 
$\Phi\colon Y\otimes X^{\spcheck}=\sC(\one ,Y\otimes X^{\spcheck})\to \sC(X,Y)$. 
Hence part (1) is a corollary of Lemma \ref{lem:Psi_Phi} according to which these two maps are equivalent for any semi-dualizable object $X$ of a monoidal category $\sC$. In particular, since $\Ban$ is a closed monoidal category, every Banach space is semi-dualizable. 
\end{proof}

\subsection{A Category of topological vector spaces}\label{subsec:TV}
In this subsection we extend Theorem \ref{thm:Ban} from Banach spaces to the category $\TV$ whose objects are locally convex topological vector spaces which are Hausdorff and complete. We recall that the topology on a vector space $X$ is required to be invariant under translations and dilations. In particular, it determines a uniform structure on $X$ which in turn allows us to speak of Cauchy nets and hence completeness; see \cite[section I.1]{Sch} for details.
The morphisms of $\TV$ are continuous linear maps, and the projective tensor product described below gives $\TV$ the structure of a symmetric monoidal category. It contains the category $\Ban$ of Banach spaces as a full subcategory.

\begin{thm}\label{thm:TV}
Let $X,Y$ be objects in the category $\TV$. 
\begin{enumerate}
\item A morphism $f\colon X\to Y$ is thick \Iff it is  nuclear.
\item If $X$ has the approximation property, then it has the trace property.
\item If $X$ has the approximation property, and $f\colon X\to X$ is nuclear, then the categorical trace of $f$ agrees with its classical trace. 
\end{enumerate}
\end{thm}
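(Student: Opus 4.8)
The plan is to reduce everything to the general categorical machinery already developed, using the known structure of the projective tensor product on $\TV$. For part (1), since "nuclear" is by definition being in the image of $\Phi\colon \sC(\one,Y\otimes X^{\spcheck})\to\sC(X,Y)$ and "thick" is being in the image of $\Psi$, it suffices by Lemma \ref{lem:Psi_Phi} to verify that every object $X\in\TV$ is semi-dualizable, i.e.\ that the functor $Z\mapsto \TV(Z\otimes X,\one)=\TV(Z\otimes X,\C)$ is representable. The representing object is the space $X^{\spcheck}$ of continuous linear functionals on $X$ equipped with an appropriate topology (the topology of uniform convergence on bounded — or equivalently, for our purposes, on suitable equicontinuous — subsets), and one must check completeness and that it lies in $\TV$; the required natural bijection $\TV(Z,X^{\spcheck})\cong\TV(Z\otimes X,\C)$ is the standard adjunction relating continuous linear maps into a dual and continuous bilinear forms, which is exactly the defining (universal) property of the projective tensor product. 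Once semi-dualizability is in hand, Lemma \ref{lem:Psi_Phi} identifies thick with nuclear and identifies $\wh\sC(X,Y)$ with $\sC(\one,Y\otimes X^{\spcheck})=Y\otimes X^{\spcheck}$.

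For part (2), under this identification the map $\Psi$ becomes $\Phi\colon Y\otimes X^{\spcheck}\to \TV(X,Y)$, $y\otimes g\mapsto (x\mapsto y\,g(x))$. The trace property for $X$ says precisely that $\wh\tr$ is constant on the fibers of $\Psi$, so it suffices to show: if $u\in X\otimes X^{\spcheck}$ maps to $0$ under $\Phi\colon X\otimes X^{\spcheck}\to\TV(X,X)$, then its image under the switching-and-evaluation composite $X\otimes X^{\spcheck}\xrightarrow{s}X^{\spcheck}\otimes X\xrightarrow{\ev}\C$ is $0$. This is the classical fact that the approximation property is equivalent to injectivity of the canonical map $X\otimes X^{\spcheck}\to\TV(X,X)$, hence that the trace functional on $X\otimes X^{\spcheck}$ descends to the nuclear operators — the reference \cite[\S 43.2]{Ko} covers exactly this in the locally convex setting. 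I would cite that result and note that $\wh\tr$ on $\wh\sC(X,X)\cong X\otimes X^{\spcheck}$ is literally the algebraic trace $y\otimes g\mapsto g(y)$, so constancy on fibers of $\Psi=\Phi$ is the statement that $\ker\Phi$ lies in the kernel of this trace, which injectivity of $\Phi$ makes vacuous.

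For part (3), once $X$ has the approximation property (hence the trace property by (2)) and $f\colon X\to X$ is nuclear, the categorical trace $\tr(f)=\wh\tr(\wh f)$ is well-defined for any thickener $\wh f$; choosing the thickener $\wh f=\alpha(t)=[X^{\spcheck},t,\ev]$ with $\Phi(t)=f$, the computation of $\wh\tr$ done in the proof of Theorem \ref{thm:vect} goes through verbatim: if $t$ corresponds to $\sum_i y_i\otimes g_i\in X\otimes X^{\spcheck}$ representing $f=\sum_i g_i(\cdot)\,y_i$, then $\wh\tr(\wh f)=\sum_i g_i(y_i)$, which is by definition the classical trace of the nuclear operator $f$ (well-defined precisely because of the approximation property). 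I expect the main obstacle to be part (1): carefully pinning down the right topology on $X^{\spcheck}$ so that it is an object of $\TV$ (complete, Hausdorff, locally convex) and so that the adjunction $\TV(Z,X^{\spcheck})\cong\TV(Z\otimes X,\C)$ holds naturally in $Z$ — i.e.\ matching the universal property of the projective tensor product to representability of $Z\mapsto\TV(Z\otimes X,\C)$. The subtlety is that $\TV$ is \emph{not} closed (as noted in the text after Definition \ref{def:semi-dualizable}), so one cannot simply invoke a closed structure; one genuinely uses that although the internal hom need not exist for a general target, the special target $\one=\C$ \emph{is} representable, and this requires the completeness hypothesis built into the definition of $\TV$. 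Parts (2) and (3) are then essentially bookkeeping plus citation of the classical approximation-property theory.
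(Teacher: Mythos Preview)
Your plan has a genuine gap in part (1), and parts (2) and (3) inherit the problem.

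First, you have misread the definition of \emph{nuclear} in $\TV$. In the paper (Definition~\ref{def:nuclear_TV}), a morphism in $\TV$ is nuclear if it \emph{factors through a nuclear map of Banach spaces}; it is \emph{not} defined as being in the image of $\Phi\colon \sC(\one,Y\otimes X^{\spcheck})\to\sC(X,Y)$, because for a general $X\in\TV$ there is no reason for a semi-dual $X^{\spcheck}$ to exist. Indeed, the paper explicitly says (just after Definition~\ref{def:semi-dualizable}) that $\TV$ is not closed \emph{no matter which topology on the hom-space is used}, and that only \emph{some} objects of $\TV$ are semi-dualizable (e.g.\ Banach spaces). Your proposed workaround --- that although the full internal hom fails, the functor $Z\mapsto\TV(Z\otimes X,\C)$ is nonetheless representable --- is exactly the claim that every $X\in\TV$ is semi-dualizable, and you give no argument for it; the paper's phrasing strongly suggests it is false in general. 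So Lemma~\ref{lem:Psi_Phi} is simply not available, and the identification $\wh\sC(X,Y)\cong Y\otimes X^{\spcheck}$ on which your entire proposal rests does not hold.

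The paper's route is quite different. For part (1) it uses a factorization lemma (Lemma~\ref{lem:factorization}) showing that any $t\colon\C\to Y\otimes Z$ and $b\colon Z\otimes X\to\C$ in $\TV$ can be factored through Banach spaces; this reduces thickness in $\TV$ to thickness in $\Ban$, where semi-dualizability \emph{is} available and Theorem~\ref{thm:Ban} applies. For parts (2) and (3) the paper does not identify $\wh\sC(X,X)$ with $X\otimes X^{\spcheck}$ at all; instead it proves a continuity lemma (Lemma~\ref{lem:continuity}): for fixed $\wh f\in\wh\TV(Y,X)$, the map $g\mapsto\wh\tr(\wh f\circ g)$ is continuous in the compact-open topology. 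Given a net $I_\nu$ of finite-rank operators converging to $\id_X$, one then has $\wh\tr(\wh f)=\lim_\nu\wh\tr(\wh f\circ I_\nu)=\lim_\nu\wh\tr(f\circ\wh I_\nu)$, which depends only on $f$ (giving (2)) and agrees term-by-term with the classical finite-rank trace by Theorem~\ref{thm:vect} (giving (3)). You should rework your argument along these lines rather than trying to force semi-dualizability.
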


Before proving this theorem, we define nuclear morphisms in $\TV$ and the approximation property. Then we'll recall the classical trace of a nuclear endomorphism of a topological vector space with the approximation property as well as the projective tensor product. 

\begin{defn}\label{def:nuclear_TV}
A continuous linear map $f\in\TV(X,Y)$ is {\em nuclear} if it factors in the form
\begin{equation}\label{eq:nuclear_factorization}
X\overset{p}\ra X_0\overset{ f_0}\ra Y_0\overset{j}\ra Y,
\end{equation}
where $f_0$ is a nuclear map between Banach spaces (see Definition \ref{def:nuclear_Ban}) and $p$,  $j$ are continuous linear maps.
\end{defn}

 The definition of nuclearity in Schaefer's book (\cite[p.\ 98]{Sch}) is phrased differently. We give his more technical definition at the end of this section and show that a continuous linear map is nuclear in his sense \Iff it is nuclear in the sense of the above definition. 
 
 \medskip
 
\noindent{\bf Approximation property.} An object $X\in\TV$ has the {\em approximation property} if the identity of $X$ is in the closure of the subspace of finite rank operators with respect to the compact-open topology \cite[Chapter III, section 9]{Sch} (our completeness assumption for topological vector spaces implies that uniform convergence on compact subsets is the same as uniform convergence on pre-compact subsets). 

\medskip

\noindent{\bf The classical trace for nuclear endomorphisms.} Let $f$ be a nuclear endomorphism of $X\in \TV$, and let $I_\nu\colon X\to X$ be a net of finite rank morphisms which converges to the identity on $X$ in the compact-open topology. Then $f\circ I_\nu$ is a finite rank operator which has a classical trace $\classtr(f\circ I_\nu)$. It can be proved that the limit $\lim_{\nu}\classtr (f\circ I_\nu)$ exists \cite[Proof of Theorem 1]{Li} and is independent of the choice of the net $I_\nu$ (this also follows from our proof of Theorem \ref{thm:TV}). The classical trace of $f$ is {\em defined} by
\[
\classtr(f):=\lim_\nu\classtr(f\circ I_\nu).
\]

\noindent{\bf Projective tensor product.} The projective tensor product of Banach spaces defined in the previous section extends to topological vector spaces as follows. For $X,Y\in \TV$ the {\em projective topology} on the algebraic tensor product $X\otimes_{alg}Y$ is the finest locally convex topology such that the canonical bilinear map
\[
\chi\colon X\times Y\ra X\otimes_{alg} Y
\qquad
(x,y)\mapsto x\otimes y
\]
is continuous \cite[p.\ 93]{Sch}. The {\em projective tensor product} $X\otimes Y$ is the topological vector space obtained as the completion of $X\otimes_{alg}Y$ with respect to the projective topology. It can be shown that it is locally convex and Hausdorff and that the morphisms $\TV(X\otimes Y,Z)$ are in bijective correspondence to continuous bilinear maps $X\times Y\to Z$; this bijection is given by sending $f\colon X\otimes Y\to Z$ to $f\circ\chi$ \cite[Chapter III, section 6.2]{Sch}. 

\medskip

\noindent{\bf Semi-norms.} For checking the convergence of a sequence or continuity of a map between locally convex topological vector spaces, it is convenient to work with semi-norms.  For $X\in\TV$ and $U\subset X$ a convex circled $0$\nb-neighborhood  ($U$ is {\em circled} if $\lambda U\subset U$ for every $\lambda\in \C$ with $|\lambda|\le 1$) one gets a semi-norm
\begin{equation}\label{eq:gauge}
||x||_U:=\inf\{\lambda\in \R_{>0}\mid x\in \lambda U\}
\end{equation}
on $X$. Conversely, a collection of semi-norms determines a topology, namely the coarsest locally convex topology such that the given semi-norms are continuous maps. For example, if $X$ is a Banach space with norm $||\quad||$, we obtain the usual topology on $X$. As another example, the projective topology on the algebraic tensor product $X\otimes_{alg}Y$ is the topology determined by the family of semi-norms $||\quad||_{U,V}$ parametrized by convex circled $0$\nb-neighborhoods $U\subset X$, $V\subset Y$ defined by 
\[
||z||_{U,V}:=\inf\{\sum_{i=1}^n ||x_i||_U ||y_i||_V \mid z=\sum_{i=1}^n x_i\otimes y_i\},
\]
where the infimum is taken over all ways of writing $z\in X\otimes_{alg}Y$ as a finite sum of elementary tensors. It follows from this description that the projective tensor product defined above is compatible with the projective tensor product of Banach spaces defined earlier (see \cite[Chapter III, section 6.3]{Sch}).

Our next goal is the proof of Theorem \ref{thm:TV}, for which we will use  the following lemma.

\begin{lem}\label{lem:factorization}
\begin{enumerate}
\item Any morphism  $t\colon \C\to Y\otimes Z$ in $\TV$ factors in the form
\[
\begin{tikzpicture}
[scale=.7,
big/.style={rectangle,rounded corners,fill=white,draw,minimum height=.5cm,minimum width=1cm},
small/.style={rectangle,rounded corners,fill=white,draw,minimum height=.5cm,minimum width=.5cm}]
\begin{scope}[shift={(1,0)}]
\draw[thick, rounded corners](0,3)--(0,0)node[below]{$Y$};
\draw[thick, rounded corners](1,3)--(1,0)node[below]{$Z$};
\node[big] at (.5,3){$t$};
\end{scope}
\node at(4,1.5){$=$};
\begin{scope}[shift={(6,0)}]
\draw[thick, rounded corners](0,3)--(0,0)node[below]{$Y$};
\draw[thick, rounded corners](1,3)--(1,0)node[below]{$Z$};
\node[big] at (.5,3){$t_0$};
\node[small] at (0,1){$j$};
\node at (-.5,2){$Y_0$};
\end{scope}
\end{tikzpicture}
\]
where $Y_0$ is a Banach space.
\item Any morphism   $b\colon Z\otimes X\to \C$ factors in the form
\[
\begin{tikzpicture}
[scale=.7,
big/.style={rectangle,rounded corners,fill=white,draw,minimum height=.5cm,minimum width=1cm},
small/.style={rectangle,rounded corners,fill=white,draw,minimum height=.5cm,minimum width=.5cm}]
\begin{scope}[shift={(1,0)}]
\draw[thick, rounded corners](0,3)node[above]{$Z$}--(0,0);
\draw[thick, rounded corners](1,3)node[above]{$X$}--(1,0);
\node[big] at (.5,0){$b$};
\end{scope}
\node at(4,1.5){$=$};
\begin{scope}[shift={(6,0)}]
\draw[thick, rounded corners](0,3)node[above]{$Z$}--(0,0);
\draw[thick, rounded corners](1,3)node[above]{$X$}--(1,0);
\node[big] at (.5,0){$ b_0$};
\node[small] at (1,2){$p$};
\node at (1.5,1){$X_0$};
\end{scope}
\end{tikzpicture}
\]
where $X_0$ is a Banach space.
\end{enumerate}
\end{lem}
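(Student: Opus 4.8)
The plan is to prove the two parts by dual constructions: part~(1) by producing a \emph{Banach disk} sitting inside $Y$, and part~(2) by producing a \emph{local Banach space} onto which $X$ maps continuously.

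\emph{Part (1).} A morphism $t\colon\C\to Y\otimes Z$ in $\TV$ is nothing but the single element $w:=t(1)$ of the completed projective tensor product $Y\otimes Z$. My first step would be to invoke the structure theorem for the completed projective tensor product (see \cite[Ch.~III, \S 6]{Sch}): $w$ admits a representation $w=\sum_{n\ge 1}\lambda_n\, y_n\otimes z_n$, convergent in $Y\otimes Z$, with $\sum_n|\lambda_n|<\infty$, with $\{z_n\}_n$ bounded in $Z$, and with $\{y_n\}_n$ contained in a bounded convex circled subset $B$ of $Y$; since $Y$ is complete we may take $B$ closed, so that $B$ is a Banach disk: the linear span $Y_0:=\bigcup_{m\ge 1} mB$, normed by the Minkowski functional $\|\cdot\|_B$ of $B$, is a Banach space, and the inclusion $j\colon Y_0\to Y$ is continuous because $B$ is bounded. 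Because $y_n\in B$ we have $\|y_n\|_B\le 1$, so for every continuous seminorm $\|\cdot\|_V$ on $Z$ the corresponding projective seminorm of $\lambda_n\, y_n\otimes z_n$ in $Y_0\otimes Z$ is at most $|\lambda_n|\,\|y_n\|_B\,\|z_n\|_V\le|\lambda_n|\,(\sup_m\|z_m\|_V)$, which is summable in $n$; hence $w_0:=\sum_n\lambda_n\, y_n\otimes z_n$ converges in $Y_0\otimes Z$. Applying the continuous map $j\otimes\id_Z$ gives $(j\otimes\id_Z)(w_0)=\sum_n\lambda_n\, y_n\otimes z_n=w$. Defining $t_0\colon\C\to Y_0\otimes Z$ by $1\mapsto w_0$, this yields $t=(j\otimes\id_Z)\circ t_0$, the asserted factorization.

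\emph{Part (2).} A morphism $b\colon Z\otimes X\to\C$ is the same datum as a continuous bilinear form $Z\times X\to\C$. By the description of the projective topology through the seminorms $\|\cdot\|_{V,U}$ recalled just above, continuity of $b$ supplies convex circled $0$\nb-neighborhoods $V\subseteq Z$ and $U\subseteq X$ such that, after rescaling $U$, one has $|b(z\otimes x)|\le\|z\|_V\,\|x\|_U$ for all $z\in Z$ and $x\in X$. I would set $N_U:=\{x\in X:\|x\|_U=0\}$ and let $X_0$ be the completion of the normed space $X/N_U$ with the norm induced by $\|\cdot\|_U$; then $X_0$ is a Banach space and the canonical map $p\colon X\to X_0$ is continuous. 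The estimate shows that $b$ vanishes on $Z\otimes N_U$, hence descends to a bilinear form on $Z\times(X/N_U)$ still bounded by $\|z\|_V\,\|[x]\|_U$; by density of $X/N_U$ in $X_0$ together with completeness, it extends uniquely to a continuous bilinear form $b_0\colon Z\times X_0\to\C$, that is, to a morphism $b_0\colon Z\otimes X_0\to\C$. A check on elementary tensors gives $b_0\circ(\id_Z\otimes p)=b$ on $Z\otimes_{alg}X$, and hence on $Z\otimes X$ by density. This is the asserted factorization.

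\emph{Main obstacle.} The one genuinely delicate point, I expect, is in part~(1): one must arrange that the series $\sum_n\lambda_n\, y_n\otimes z_n$ converges not merely in $Y\otimes Z$ but in the \emph{finer} topology of $Y_0\otimes Z$. This is precisely what is bought by choosing $B$ to be a bounded Banach disk containing all the vectors $y_n$ produced by the structure theorem, together with the standard fact, relying on completeness of $Y$, that the closed convex circled hull of a bounded precompact set (in particular of a null sequence) is such a disk. Part~(2) is by comparison routine, being a direct unwinding of the continuity of $b$ through the projective seminorms.
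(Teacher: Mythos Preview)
Your proof is correct and follows essentially the same route as the paper. For part~(1) you invoke the representation of elements in the completed projective tensor product (the paper uses the null-sequence form $y_i\to 0$, $z_i\to 0$, while you phrase it via bounded sequences, but the underlying Schaefer result and the Banach-disk construction $Y_0=Y_B$ are identical), and for part~(2) you factor through the local Banach space $X_U$ exactly as the paper does.
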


For the proof of this lemma, we will need the following two ways to construct Banach spaces from a topological vector space $X$:
\begin{enumerate}
\item Let $U$ be a convex, circled neighborhood of $0\in X$. Let $X_U$ be the Banach space obtained from $X$ by quotiening out the null space of the semi-norm $||\quad||_U$ and by completing the resulting normed vector space. Let  $p_U\colon X \to X_U$ be the evident map. 
\item Let $B$ be a convex, circled bounded subset of $X$. We recall that $B$ is {\em bounded} if for each neighborhood $U$ of $0\in X$ there is some $\lambda\in \C$ such that $B\subset\lambda U$. Let $X_B$ be the vector space $X_B:=\bigcup_{n=1}^\infty nB$ equipped with the norm $||x||_B:=\inf\{\lambda\in \R_{>0}\mid x\in \lambda B\}$. If $B$ is closed in $X$, then $X_B$ is complete (by our assumption that $X$ is complete), and hence $X_B$ is a Banach space (\cite[Ch.\ III, \S 7; p.\ 97]{Sch}). The inclusion map $j_B\colon X_B\to X$ is continuous thanks to the assumption that $B$ is bounded.
\end{enumerate}

\begin{proof}[Proof of Lemma~\ref{lem:factorization}]
To prove part (1) we use the fact (see e.g.\ Theorem 6.4 in Chapter III of \cite{Sch}) that any element of the completed projective tensor product $Y\otimes Z$, in particular the element $t(1)$, can be written in the form 
\begin{equation}\label{eq:tensorproductelement}
t(1)=\sum_{i=1}^\infty\lambda_iy_i\otimes z_i
\quad\text{with}\quad
y_i\to 0\in Y,\, z_i\to 0\in Z,\, \sum |\lambda_i|< \infty
\end{equation}
Let $B':=\{y_i\mid i=1,2\dots\}\cup\{0\}$, and let $B$ be the closure of the {\em convex, circled hull} of $B'$ (the convex circled hull of $B'$ is the intersection of all convex circled subsets of $W$ containing $B'$). We note that $B'$ is bounded, hence its convex, circled hull is bounded, and hence $B$ is bounded.  

We define $j\colon Y_0\to Y$ to be the map  $j_B\colon Y_B\to Y$. To finish the proof of part (1), it suffices to show that $t(1)$ is in the image of the inclusion map $j_B\otimes \id_Z\colon Y_B\otimes Z\into Y\otimes Z$. It is clear that each partial sum $\sum_{i=1}^n\lambda_iy_i\otimes z_i$ belongs to the algebraic tensor product $Y_B\otimes_{alg}Z$, and hence we need to show that the sequence of partial sums is a Cauchy sequence with respect to the semi-norms $||\quad||_{B,V}$ on $Y_B\otimes_{alg} Z$ that define the projective topology (here $V$ runs through all convex circled $0$\nb-neighborhoods $V\subset Z$). Since $y_i\in B$, it follows $||y_i||_B\le 1$ and hence we have the estimate 
\[
||\sum_{i=1}^n\lambda_iy_i\otimes z_i||_{B,V}
\le \sum_{i=1}^n|\lambda_i|\,||y_i||_B||z_i||_V
\le \sum_{i=1}^n|\lambda_i|\,||z_i||_V
\]
Since $z_i\to 0$, we have $||z_i||_V\le 1$ for all but finitely many $i$, and this implies that the partial sums form a Cauchy sequence.

To prove part (2) we recall that the morphism $b\colon Z\otimes X\to \C$ corresponds to a continuous bilinear map $b'\colon Z\times X\to \C$. The continuity of $b'$ implies that there are convex, circled $0$\nb-neighborhoods $V\subset Z$, $U\subset X$ such that $|b'(z,x)|<1$ for $z\in V$, $x\in U$. It follows that $|b'(z,x)|\le ||z||_V||x||_U$ for all $z\in Z$, $x\in X$. Hence $b$ extends to a continuous bilinear map
\[
\wt b'\colon Z\times X_U\ra \C
\]
for the completion $X_U$ of $X$. This corresponds to the desired morphism $b_0\colon Z\otimes X_U\to \C$; the property $b=b_0\circ (\id_Z\otimes p_U)$ is clear by construction. Defining the map $p\colon X\to X_0$ to be $p_U\colon X\to X_U$, we obtained the desired factorization of $b$.
\end{proof}

\begin{rem}\label{rem:set}
The proofs above and below imply that for fixed objects $X,Y\in\TV$, the thickened morphisms $\wh\TV(X,Y)$ actually form a {\em set}. 
Any triple $(Z,t,b)$ can be factored into Banach spaces $X_0, Y_0, Z_0$ as explained in the 3 pictures below. By the argument above, we may actually choose $X_0=X_U$ where $U$ runs over certain subsets of $X$. Since $X$ is fixed, it follows that the arising Banach spaces $X_U$ range over a certain set. Finally, by Lemma~\ref{lem:Psi_Phi}, the Banach space $Z_0$ may be replaced by $X_U^{\spcheck}$ without changing the equivalence class of the triple. Therefore, the given triple $(Z,t,b)$ is equivalent to a triple of the form $(X_U^{\spcheck},t',b')$. Since the collection of objects $X_U^{\spcheck}\in\TV$ forms a set, we see that $\wh\TV(X,Y)$ is a set as well. 

In this paper, we have not addressed the issue whether $\wh\sC(X,Y)$ is a set because this problem does not arise in the examples we discuss: The argument above for $\sC=\TV$ is the hardest one, in all other examples we actually identify $\wh\sC(X,Y)$ with some very concrete set. 

This problem is similar to the fact that presheaves on a given category do not always form a category (because natural transformations do not always form a set). So we are following the tradition of treating this problem only if forced to.
\end{rem}

\begin{proof}[Proof of Theorem \ref{thm:TV}]
The factorization \eqref{eq:nuclear_factorization} shows that a nuclear morphism $f\colon X\to Y$ factors through a nuclear map $f_0\colon X_0\to Y_0$ of Banach spaces. Then $f_0$ is thick by Theorem \ref{thm:Ban} and hence $f$ is thick, since pre- or post-composition of a thick morphism with an any morphism is thick. 
To prove the converse, assume that $f$ is thick, i.e., that it can be factored in the form 
\[
f=(\id_Y\otimes b)(t\otimes \id_X)
\qquad
t\colon \one\to Y\otimes Z\quad b\colon Z\otimes X\to \one.
\]
Then using Lemma \ref{lem:factorization} to factorize $t$ and $b$, we see that $f$ can be further factored in the form
\[
\begin{tikzpicture}
[scale=.7,
big/.style={rectangle,rounded corners,fill=white,draw,minimum height=.6cm,minimum width=2cm},
small/.style={rectangle,rounded corners,fill=white,draw,minimum height=.6cm,minimum width=.6cm}]
\node at(-1,2){$f=$};
\begin{scope}[shift={(1.5,0)}]
\draw[thick, rounded corners](0,3)--(0,0)node[below]{$Y$};
\draw[thick, rounded corners](2,3)--(2,1);
\draw[thick, rounded corners](4,4)node[above]{$X$}--(4,1);
\node[big] at (1,3){$t_0$};
\node[big] at (3,1){$b_0$};
\node[small] at (0,1){$j$};
\node[small] at (4,3){$p$};
\node at (-.5,2){$Y_0$};
\node at (4.5,2){$X_0$};
\node at (1.5,2){$Z$};
\end{scope}
\end{tikzpicture}
\]
This implies that $f$ has the desired factorization $X\overset{p}\ra X_0\overset{f_0}\ra Y_0\overset{j}\ra Y$, where
\[
\begin{tikzpicture}
[scale=.7,
big/.style={rectangle,rounded corners,fill=white,draw,minimum height=.6cm,minimum width=2cm},
small/.style={rectangle,rounded corners,fill=white,draw,minimum height=.6cm,minimum width=.6cm}]
\node at(-1,2){$f_0=$};
\begin{scope}[shift={(1.5,0)}]
\draw[thick, rounded corners](0,3)--(0,.5)node[below]{$Y_0$};
\draw[thick, rounded corners](2,3)--(2,1);
\draw[thick, rounded corners](4,3.5)node[above]{$X_0$}--(4,1);
\node[big] at (1,3){$t_0$};
\node[big] at (3,1){$b_0$};
\node at (1.5,2){$Z$};
\end{scope}
\end{tikzpicture}
\]
It remains to show that $f_0$ is a nuclear map between Banach spaces. In the category of Banach spaces a morphisms is nuclear \Iff it is thick by Theorem \ref{thm:Ban}. At first glance, it seems that the factorization of $f_0$ above shows that $f_0$ is thick. However, on second thought one realizes that we need to replace  $Z$ by a {\em Banach space} to make that argument. This can be done by using again our Lemma \ref{lem:factorization} to factorize $t_0$ and hence $f_0$ further in the form
\[
\begin{tikzpicture}
[scale=.7,
big/.style={rectangle,rounded corners,fill=white,draw,minimum height=.6cm,minimum width=2cm},
small/.style={rectangle,rounded corners,fill=white,draw,minimum height=.6cm,minimum width=.6cm}]
\node at(-1,2){$f_0=$};
\begin{scope}[shift={(1.5,0)}]
\draw[thick, rounded corners](0,3)--(0,-2.5)node[below]{$Y_0$};
\draw[thick, rounded corners](2,3)--(2,-1);
\draw[thick, rounded corners](4,3.5)node[above]{$X_0$}--(4,-1);
\node[big] at (1,3){$t'_0$};
\node[big] at (3,-1){$b_0$};
\node[small] at (2,1){$j'$};
\node at (1.5,2){$Z_0$};
\node at (1.5,0){$Z$};
\end{scope}
\end{tikzpicture}
\]
This shows that $f_0$ is a thick morphism in the category $\Ban$ and hence nuclear by Theorem \ref{thm:Ban}. 
The key for the proof of parts (2) and (3) of Theorem~\ref{thm:TV} will be the following lemma.

\begin{lem} \label{lem:continuity} For any $\wh f\in \wh\TV(Y,X)$ the  map 
\begin{equation}\label{eq:continuity}
\TV(X,Y)\ra \TV(\one,\one)=\C
\qquad
g\mapsto \wh\tr(\wh f\circ g)
\end{equation}
is continuous with respect to the compact open topology on $\TV(X,Y)$. 
\end{lem}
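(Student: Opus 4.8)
The plan is to turn $\wh\tr(\wh f\circ g)$ into an explicit, manifestly $g$-dependent scalar, expand it as an absolutely convergent series via the standard representation of elements of a projective tensor product, and then read off continuity from a single seminorm estimate.

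First I would unwind definitions. Write $\wh f=[Z,t,b]$ with $t\in\TV(\one,X\otimes Z)$ and $b\in\TV(Z\otimes Y,\one)$. By Lemma \ref{lem:hat_composition1} the precomposition is $\wh f\circ g=[Z,t,b\circ(\id_Z\otimes g)]$, so by the definition of $\wh\tr$ we get $\wh\tr(\wh f\circ g)=b\circ(\id_Z\otimes g)\circ s_{X,Z}\circ t$, evaluated at $1\in\C$. Setting $\phi:=s_{X,Z}\circ t\in\TV(\one,Z\otimes X)$ — a morphism that does not depend on $g$ — the number to be controlled is $b\bigl((\id_Z\otimes g)(\phi(1))\bigr)$. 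Now use the structure theorem for the completed projective tensor product (Schaefer, Ch.~III, Thm.~6.4 — the same fact used in the proof of Lemma \ref{lem:factorization}; if one prefers to invoke it only for Banach spaces, first replace $(Z,t,b)$ by an equivalent triple with $Z$ Banach, as in Remark \ref{rem:set}) to write $\phi(1)=\sum_{i\ge1}\lambda_i\,z_i\otimes x_i$ with $z_i\to0$ in $Z$, $x_i\to0$ in $X$, and $\sum_i|\lambda_i|<\infty$. Since $\id_Z\otimes g$ and $b$ are continuous and linear, they may be applied term by term, giving $\wh\tr(\wh f\circ g)=\sum_{i\ge1}\lambda_i\,b'(z_i,g(x_i))$, where $b'\colon Z\times Y\to\C$ is the continuous bilinear map corresponding to $b$.

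The estimate is then immediate. Continuity of $b'$ furnishes convex circled $0$-neighborhoods $V\subset Z$, $W\subset Y$ with $|b'(z,y)|\le\|z\|_V\|y\|_W$ for all $z,y$; because $z_i\to0$ the numbers $\|z_i\|_V$ are bounded by some $M<\infty$, and $K:=\{x_i:i\ge1\}\cup\{0\}$ is compact in $X$. Hence for $g,g'\in\TV(X,Y)$ one gets $\bigl|\wh\tr(\wh f\circ g)-\wh\tr(\wh f\circ g')\bigr|\le M\bigl(\sum_i|\lambda_i|\bigr)\sup_{x\in K}\|(g-g')(x)\|_W$, and the right-hand side is exactly what the compact-open topology controls — it shrinks to $0$ as $g-g'$ runs through the basic $0$-neighborhoods $\{h:h(K)\subseteq\delta W\}$ — so $g\mapsto\wh\tr(\wh f\circ g)$ is uniformly continuous, which proves the lemma.

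I do not expect any single hard step, but the point requiring care is the bookkeeping around the series: recognizing that one and the same representation of $\phi(1)$ simultaneously produces a compact set $K\subset X$ (the $x_i$'s), a $V$-bounded family (the $z_i$'s), and an $\ell^1$ sequence of coefficients, and that these three are precisely matched to the compact-open topology on the source and the continuity of $b'$ on the target; and justifying the term-by-term application of $\id_Z\otimes g$ and $b$, which is where completeness of the projective tensor product is actually used. The optional reduction to $Z$ Banach (via Remark \ref{rem:set}, or Lemma \ref{lem:factorization} together with naturality of $s$) is routine.
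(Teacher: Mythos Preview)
Your proof is correct and follows essentially the same approach as the paper: represent the element of the projective tensor product as an absolutely convergent series $\sum\lambda_i\,z_i\otimes x_i$ (via Schaefer, Ch.~III, Thm.~6.4), obtain $\wh\tr(\wh f\circ g)=\sum_i\lambda_i\,b'(z_i,g(x_i))$, and use continuity of $b'$ together with the compact set $K=\{x_i\}\cup\{0\}$ to read off continuity in the compact-open topology. The only cosmetic differences are that the paper expands $t(1)$ rather than $s_{X,Z}(t(1))$ (equivalent), rescales so that $z_i\in V$ and $\sum|\lambda_i|=1$ instead of introducing your bound $M$, and checks continuity at $0$ of the linear map directly rather than writing your $g,g'$ difference estimate.
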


To prove part (2) of Theorem \ref{thm:TV}, assume that $X$ has the approximation property and let $I_\nu\colon X\to X$ be a net of finite rank operators converging to the identity of $X$ in the compact open topology. Then by the lemma,   for any  $\wh f\in \wh\TV(X,X)$, the net 
\[
\wh\tr(f\circ \wh I_\nu)=\wh\tr(\wh f\circ I_\nu)
\qquad\text{converges to}\qquad
\wh\tr(\wh f\circ \id_X)=\wh\tr(\wh f). 
\]
Here $\wh I_\nu\in \wh\TV(X,X)$ are thickeners of $I_\nu$. They exist since every finite rank morphisms is nuclear and hence thick by part (1). This implies the {\em trace condition} for $X$, since $\wh\tr(\wh f)=\lim_\nu \wh \tr(f\circ\wh I_\nu)$ depends only on $f$.

To prove part (3) let $f\in\sC(X,X)$ be a nuclear endomorphism of $X\in\TV$ and let $I_\nu$ be a net of finite rank operators converging to the identity of $X$ in the compact open topology. By part (1) $f$ is thick, i.e., there is a thickener $\wh f\in \sC(X,X)$. Then as discussed above, we have
\[
\classtr(f)=\lim_{\nu}\classtr(f\circ I_\nu)
\qquad\text{and}\qquad
\tr(f)=\wh \tr(\wh f)=\lim_\nu \wh \tr(f\circ \wh I_\nu)=\lim_\nu \tr( {f\circ  I_\nu})
\]
For the finite rank operator $f\circ I_\nu$ its classical trace $\classtr(f\circ I_\nu)$ and its categorical trace $ \tr( {f\circ  I_\nu})$ agree by part (3) of Theorem \ref{thm:vect}. 
\end{proof}

\begin{proof}[Proof of Lemma \ref{lem:continuity}]
Let $\wh f=[Z,t,b]\in \wh \TV(Y,X)$. Then $\wh f\circ g=[Z,t,b\circ (\id_Z\otimes g)]$ and hence $\wh\tr(\wh f\circ g)$ is given by the composition
\begin{equation}\label{eq:composition}
\C\overset{t}\ra X\otimes Z\overset{s_{X,Z}}\ra
Z\otimes X\overset{\id_Z\otimes g}\ra
Z\otimes Y\overset{b}\ra\C
\end{equation}
As in Equation \eqref{eq:tensorproductelement} we write $t(1)\in X\otimes Z$ in the form 
\[
t(1)=\sum_{i=1}^\infty\lambda_ix_i\otimes z_i
\quad\text{with}\quad
x_i\to 0\in X,\, z_i\to 0\in Z,\, \sum_{i=1}^\infty |\lambda_i|< \infty
\]
This implies
\[
\wh\tr(\wh f\circ g)=\sum_{i=1}^\infty\lambda_ib'(z_i,g(x_i)),
\]
where $b'\colon Z\times Y\to \C$ is the continuous bilinear map corresponding to $b\colon Z\otimes Y\to \C$. 

To show that the map $g\mapsto \wh\tr(\wh f\circ g)$ is continuous, we will construct for given $\epsilon>0$ a compact subset $K\subset X$ and an open subset $U\subset Y$ such that for
\[
g\in\scr U_{K,U}:=\{g\in \TV(X,Y)\mid g(K)\subset U\}\subset\TV(X,Y)
\]
we have $|\wh\tr(\wh f\circ g)|<\epsilon$. To construct $U$, we note that
the continuity of $b'$ implies that there are $0$\nb-neighborhoods $U\subset Y$, $V\subset Z$ such that $y\in U$, $z\in V$ implies $| b'(z,y)|<\epsilon$. Without loss of generality we can assume $z_i\in V$ for all $i$ (by replacing $z_i$ by $cz_i$ and $x_i$ by $x_i/c$ for a sufficiently small number $c$) and $\sum|\lambda_i|=1$ (by replacing $\lambda_i$ by $\lambda_i/s$ and $x_i$ by $sx_i$ for $s=\sum|\lambda_i|$). We define $K:=\{x_i\mid i\in \N\}\cup \{0\}\subset X$. Then for $g\in \scr U_{K,U}$ we have
\[
|\wh \tr(\wh f\circ g)|=|\sum \lambda_i  b'(z_i,g(x_i)|
\le \sum |\lambda_i| |b'(z_i,g(x_i)|
<(\sum |\lambda_i|)\epsilon=\epsilon.
\]
\end{proof}

Finally we compare our definition of a nuclear map between topological vector spaces (see Definition \ref{def:nuclear_Ban}) with the more classical definition which can be found e.g.\ in \cite[p.\ 98]{Sch}. A continuous linear map $f\in\TV(X,Y)$ is {\em nuclear in the classical sense} if there is a convex circled $0$\nb-neighborhood $U\subset X$, a closed, convex, circled, bounded subset $B\subset Y$ such that $f(U)\subset B$ and the induced map of Banach spaces $X_U\to Y_B$ is nuclear. 

\begin{lem} A morphism in $\TV$ is nuclear \Iff it is nuclear in the classical sense.
\end{lem}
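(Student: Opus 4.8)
The two implications are of rather different character, so I would treat them separately, and the real work is in one of them only.

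The implication ``nuclear in the classical sense $\Rightarrow$ nuclear in the sense of Definition~\ref{def:nuclear_TV}'' is essentially formal. Suppose $f\in\TV(X,Y)$ is classically nuclear, witnessed by a convex circled $0$-neighbourhood $U\subset X$ and a closed convex circled bounded subset $B\subset Y$ with $f(U)\subset B$ and with the induced map of Banach spaces $\bar f\colon X_U\to Y_B$ nuclear; recall that $Y_B$ is indeed a Banach space because $B$ is closed and $Y$ is complete. Since $U$ is absorbing, $f(X)\subset\bigcup_n nB=Y_B$, and one checks directly that $f=j_B\circ\bar f\circ p_U$. This is a factorisation of the form \eqref{eq:nuclear_factorization} with Banach spaces $X_0=X_U$ and $Y_0=Y_B$, so $f$ is nuclear.

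For the converse I would start from a factorisation $X\overset{p}{\ra}X_0\overset{f_0}{\ra}Y_0\overset{j}{\ra}Y$ with $X_0,Y_0$ Banach and $f_0$ nuclear, and manufacture the pair $(U,B)$ out of a good series representation of $f_0$. By Definition~\ref{def:nuclear_Ban}, $f_0=\Phi(w)$ for some $w\in Y_0\otimes X_0^{\spcheck}$; writing $w$ in the form \eqref{eq:tensorproductelement} and rescaling the summands (absorbing the norms $\|\phi_i\|$ into the scalars, the phases of the scalars into the $y_i$, and finally normalising so that the scalars sum to $1$) I may assume
\[
f_0(\xi)=\sum_{i=1}^\infty\lambda_i\,\phi_i(\xi)\,y_i,\qquad \|\phi_i\|\le 1,\quad\lambda_i\ge 0,\quad\sum_i\lambda_i=1,\quad\{y_i\}\ \text{bounded in}\ Y_0 .
\]
Put $\psi_i:=\phi_i\circ p\in X^{\spcheck}$, let $U:=p^{-1}\bigl(\{\xi\in X_0:\|\xi\|\le 1\}\bigr)$ — a convex circled $0$-neighbourhood of $X$ whose gauge is $\|x\|_U=\|p(x)\|_{X_0}$, so that $|\psi_i(x)|\le\|x\|_U$ — and let $B$ be the closed convex circled hull in $Y$ of $\{j(y_i):i\ge 1\}\cup\{0\}$. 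Because $j$ is continuous and $\{y_i\}$ is bounded, $\{j(y_i)\}$ is bounded, hence so is its convex circled hull and the closure thereof (the same reasoning as in the proof of Lemma~\ref{lem:factorization}(1)); thus $B$ is closed, convex, circled and bounded.

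It then remains to verify the two defining conditions for classical nuclearity. First, $f(x)=j(f_0(p(x)))=\sum_i\lambda_i\,\psi_i(x)\,j(y_i)$, the series converging in $Y$ because $\{j(y_i)\}$ is bounded and $\sum_i\lambda_i|\psi_i(x)|<\infty$; for $x\in U$ each summand $\psi_i(x)j(y_i)$ lies in $B$ (as $|\psi_i(x)|\le 1$, $B$ is circled and $j(y_i)\in B$), every partial sum is a sub-convex combination of points of $B\ni 0$ and so lies in $B$, and $B$ is closed, whence $f(x)\in B$; thus $f(U)\subset B$. Second, $f$ descends to a map $X_U\to Y_B$ of norm $\le 1$ (if $\|x\|_U=0$ then $f(x)\in\epsilon B$ for every $\epsilon>0$, so $f(x)=0$), and I would identify the induced map with $\bar f=\sum_i\lambda_i\,\bar\psi_i\otimes\overline{j(y_i)}$, where $\bar\psi_i\in(X_U)^{\spcheck}$ has norm $\le 1$ (because $|\psi_i(x)|\le\|x\|_U$) and $\overline{j(y_i)}\in Y_B$ has $\|\cdot\|_B$-norm $\le 1$ (because $j(y_i)\in B$); the series converges already in the finer topology of $Y_B$, since $\bigl\|\sum_{i=m}^n\lambda_i\psi_i(x)j(y_i)\bigr\|_B\le\|x\|_U\sum_{i=m}^n\lambda_i$, so it genuinely represents the induced map. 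As $\sum_i\|\lambda_i\bar\psi_i\|\,\|\overline{j(y_i)}\|_B\le\sum_i\lambda_i<\infty$, this exhibits $\bar f\colon X_U\to Y_B$ as nuclear between Banach spaces (it lies in the image of $\Phi\colon Y_B\otimes(X_U)^{\spcheck}\to\Ban(X_U,Y_B)$), and therefore $f$ is nuclear in the classical sense, witnessed by $U$ and $B$.

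I expect the converse implication to be the only genuine obstacle: the neighbourhood $U$ and the bounded set $B$ have to be chosen so that \emph{simultaneously} $f(U)\subset B$ and the induced Banach-space map is nuclear, and this is what forces both the normalisation $\sum_i\lambda_i=1$ with the attendant convexity argument for $f(U)\subset B$, and the observation that the nuclear series for the induced map converges in the finer topology of $Y_B$ rather than merely in $Y$. The forward implication and all the boundedness/absorbency bookkeeping are routine.
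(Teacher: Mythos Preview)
Your proof is correct. The easy direction (classical $\Rightarrow$ Definition~\ref{def:nuclear_TV}) is handled exactly as in the paper. For the converse you take a genuinely different route.

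The paper's argument for the converse is more structural: given the factorisation $X\overset{p}{\to}X_0\overset{f_0}{\to}Y_0\overset{j}{\to}Y$, it sets $U:=p^{-1}(\mathring B_\delta)$ for a suitable $\delta>0$ and $B:=j(B_1)$ (the image of the closed unit ball), and then shows that the canonical maps $p$ and $j$ themselves factor through $p_U$ and $j_B$, producing a commutative diagram
\[
\xymatrix{
X\ar[r]^p\ar[rd]_{p_U}&X_0\ar[r]^{f_0}&Y_0\ar[d]\ar[r]^j&Y\\
&X_U\ar[u]\ar[r]_{f'}&Y_B\ar[ru]_{j_B}&
}
\]
so that the induced map $f'\colon X_U\to Y_B$ factors \emph{through} the nuclear map $f_0$. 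Nuclearity of $f'$ then follows from the ideal property of nuclear operators, without ever unpacking a series representation.

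Your approach instead unpacks $f_0$ into an explicit nuclear series, uses the vectors $j(y_i)$ to build $B$ as a closed absolutely convex hull, and then directly exhibits a nuclear representation of the induced map $\bar f\colon X_U\to Y_B$. This is more hands-on but also more self-contained: you never invoke the ideal property, and the verification that $f(U)\subset B$ via the sub-convex combination argument (which is why you normalise to $\sum\lambda_i=1$) together with the convergence of the series already in the $\|\cdot\|_B$-topology makes the mechanism completely transparent. The paper's version is slicker and avoids the series manipulations, at the cost of relying on the (standard) fact that nuclear maps are stable under composition with bounded maps on either side.
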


\begin{proof}
If $f\colon X\to Y$ is nuclear in the classical sense, it factors in the form
\[
X\overset{p_U}\ra X_U\overset{ f_0}\ra Y_B\overset{j_B}\ra Y,
\]
where $f_0$ is nuclear. Hence $f$ is nuclear.

Conversely, let us assume that $f$ is nuclear, i.e., that it factors in the form
\[
X\overset{p}\ra X_0\overset{ f_0}\ra Y_0\overset{j}\ra Y,
\]
where $f_0$ is a nuclear map between Banach spaces. To show that $f$ is nuclear in the classical sense, we will construct a convex circled $0$\nb-neighborhood $U\subset X$, and a closed, convex, circled, bounded subset $B\subset Y$ such that $f(U)\subset B$ and we have a commutative diagram
\[
\xymatrix{
X\ar[r]^p\ar[rd]_{p_U}&X_0\ar[r]^{f_0}&Y_0\ar[d]\ar[r]^j&Y\\
&X_U\ar[u]\ar[r]_{f'}&Y_B\ar[ru]_{j_B}&
}
\]
Then the induced map $f'$ factors through the nuclear map $f_0$, hence $f'$ is nuclear and $f$ is nuclear in the classical sense.

We define $U:=p^{-1}(\mathring B_\delta)$, where $\mathring B_\delta\subset X_0$ is the open ball of radius $\delta$ around the origin in the Banach space $X_0$, and $\delta>0$ is chosen such that $\mathring B_\delta\subset f_0^{-1}(\mathring B_1)$. The continuity of $p$ implies that $U$ is open. Moreover, $p(U)\subset \mathring B_\delta\=\delta\mathring B_1$ implies $||p(x)||<\delta$ for $x\in U$ which in turn implies the estimate $||p(x)||<\delta||x||_U$ for all $x\in X$. It follows that the map $p$ factors through $p_U$. 

We define $B:=j(B_1)\subset Y$, where $B_1$ is the closed unit ball in $Y_0$. This is a bounded subset of $Y$, since for any open subset $U\subset Y$ the preimage $j^{-1}(U)$ is an open $0$\nb-neighborhood of $Y_0$ and hence there is some $\epsilon >0$ such that $ B_\epsilon\subset j^{-1}(U)$. Then $\epsilon j(B_1)=j(B_\epsilon)$ is a subset of $U$. This implies $j(B_1)\subset \frac 1\epsilon U$ and hence $B$ is a bounded subset of $Y$ (it is clear that $B$ is closed, convex and circled). By construction of $B$ we have the inequality $||j(y)||_B\le ||y||$ which implies that $j$ factors through $j_B$. Also by construction, $f(U)$ is contained in $B$ and hence $f$ induces a morphism $f'\colon X_U\to Y_B$. It follows that the outer edges of the diagram form a commutative square. The fact that $j_B$ is a monomorphism and that the image of $p_U$ is dense in $X_U$ then imply that the middle square of the diagram above is commutative. 
\end{proof}
\subsection{The Riemannian bordism category}
We recall from section \ref{sec:motivation} that the objects of the $d$\nb-dimensional Riemannian bordism category $\RBord{d}$ are closed $(d-1)$\nb-dimensional Riemannian manifolds. Given $X,Y\in \RBord{d}$,   the set $\RBord{d}(X,Y)$ of morphisms from $X$ to $Y$ is the disjoint union of the set of isometries from $X$ to $Y$ and the set of Riemannian bordisms from $X$ to $Y$ (modulo isometry relative boundary), except that for $X=Y=\emptyset$, the identity isometry equals the empty  bordism. 

\begin{thm}\label{thm:bord}
\begin{enumerate}
\item A morphism in $\RBord{d}$ is {\em thick} \Iff it is a bordism.
\item For any $X,Y\in \RBord{d}$ the map 
\[
\Psi\colon \wh {\RBord{d}}(X,Y)\ra \RBord{d}(X,Y)
\]
 is injective. In particular, every object $X\in \RBord{d}$ has the trace property and every bordism $\Sigma$ from $X$ to $X$ has a well-defined trace $\tr(\Sigma)\in \RBord{d}(\emptyset,\emptyset)$.
 \item If $\Sigma$ is a Riemannian bordism from $X$ to $X$, then $\tr(\Sigma)=\Sigma_{gl}$, the closed Riemannian manifold obtained by gluing the two copies of $X$ in the boundary of $\Sigma$.
\end{enumerate}
\end{thm}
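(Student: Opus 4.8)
The plan is to treat the three parts in order, using two bookkeeping conventions peculiar to the \emph{unoriented} Riemannian setting: a bordism $\Sigma$ with $\partial\Sigma\cong W_0\amalg W_1$ may be reread at will as a morphism $W_0\to W_1$, or $\emptyset\to W_0\amalg W_1$, or $W_0\amalg W_1\to\emptyset$; and attaching a cylinder $X\times[0,1]$ (with its product metric) to a boundary component never changes the isometry class of a bordism, because all metrics are products near the boundary. I will write $t_\Sigma\colon\emptyset\to Y\amalg X$ for $\Sigma\colon X\to Y$ reread with empty source, and $\mathrm{cyl}_X\colon X\amalg X\to\emptyset$ for the cylinder reread with empty target. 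I would first record that $\RBord{d}$ is symmetric monoidal under disjoint union, with switching isomorphism $s_{X,Y}$ the isometry swapping the two summands, so that the general machinery developed above applies, and that the monoidal product of a bordism with an isometry is the disjoint union of that bordism with the mapping cylinder of the isometry.

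For part (1), to see that a bordism $\Sigma\colon X\to Y$ is thick I would take $Z:=X$, $t:=t_\Sigma$, $b:=\mathrm{cyl}_X$; the composite \eqref{eq:factorization} then amounts to capping the $X$-boundary of $\Sigma$ off against the incoming copy of $X$ through a cylinder, which by the collar convention returns $\Sigma$. Conversely, if $f$ is thick, factored through $t\colon\emptyset\to Y\amalg Z$ and $b\colon Z\amalg X\to\emptyset$, then $t$ and $b$ are morphisms with empty source, respectively target, hence bordisms (an isometry out of $\emptyset$ lands in $\emptyset$), and a composite of bordisms is a bordism; so $f$ is a bordism. Thus thick $\Leftrightarrow$ bordism; in particular $\id_X$ fails to be thick for $X\neq\emptyset$, which is exactly why $X$ is not dualizable and the thickener formalism is needed here.

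For part (2), I would call $\widehat\Sigma:=[X,t_\Sigma,\mathrm{cyl}_X]$ the \emph{canonical thickener} of a bordism $\Sigma\colon X\to Y$ (it maps to $\Sigma$ under $\Psi$ by part (1)) and show that \emph{every} triple $(Z',t',b')$ with $\Psi(Z',t',b')=\Sigma$ is equivalent to it. The equivalence is witnessed by $g:=b'$ reread as a bordism $Z'\to X$: checking the two conditions of \eqref{eq:equivalence} reduces, on the one hand, to the statement that gluing $t'$ and $b'$ along $Z'$ recovers $\Sigma$, which is the hypothesis $\Psi(Z',t',b')=\Sigma$ reread with empty source, and, on the other, to $\mathrm{cyl}_X\circ(g\amalg\id_X)=b'$, a collar attachment. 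Hence the fibre $\Psi^{-1}(\Sigma)$ is a single equivalence class, so $\Psi$ is injective; the fibres being singletons, $\wh\tr$ is automatically constant on them, so every object of $\RBord{d}$ has the trace property and $\tr(\Sigma)\in\RBord{d}(\emptyset,\emptyset)$ is defined for every bordism $\Sigma\colon X\to X$.

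For part (3), applying the canonical thickener gives $\tr(\Sigma)=\wh\tr(X,t_\Sigma,\mathrm{cyl}_X)=\mathrm{cyl}_X\circ s_{X,X}\circ t_\Sigma$. Since $s_{X,X}$ is the swap isometry, $s_{X,X}\circ t_\Sigma$ is $\Sigma$ with empty source and the two $X$-ends interchanged in the labelling; post-composing with $\mathrm{cyl}_X$ then glues those two $X$-ends to one another through a cylinder, i.e.\ glues the incoming $X$ of $\Sigma$ to its outgoing $X$ by the identity, the cylinder being absorbed as a collar. The result is precisely the closed manifold $\Sigma_{gl}$. The main obstacle throughout is not conceptual but the clean statement and use of the rereading and collar-absorption conventions (including the meaning of the monoidal product of a bordism with an isometry); this is the one place where the product-metric-near-boundary hypothesis is genuinely needed, and it should be set up carefully before the three parts are carried out, after which everything is formal string-diagram manipulation.
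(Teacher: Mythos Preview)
Your argument rests on the claim that ``attaching a cylinder $X\times[0,1]$ to a boundary component never changes the isometry class of a bordism.'' This is false in $\RBord{d}$, where morphisms are Riemannian bordisms taken up to \emph{isometry} relative boundary: if $\Sigma = X\times[0,2]$ then $\Sigma\cup_X (X\times[0,1]) \cong X\times[0,3]$, which is not isometric to $\Sigma$. The product-metric-near-boundary hypothesis guarantees only that gluing produces a smooth Riemannian metric on the union; it does not make positive-length cylinders act trivially. (Your argument would go through verbatim in a \emph{topological} or \emph{smooth} bordism category, where collars can be absorbed by a diffeomorphism, but not here.) This single error undoes all three parts. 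In part~(1) your triple $(X,t_\Sigma,\mathrm{cyl}_X)$ is not a thickener of $\Sigma$: applying $\Psi$ glues the $X$-end of $\Sigma$ to the cylinder and returns $\Sigma$ with an extra collar of positive length grafted on. In part~(2) the relation $b' = \mathrm{cyl}_X\circ(g\amalg\id_X)$ you need says exactly that $b'$ is unchanged after a cylinder is attached at its $X$-end, which fails for the same reason. In part~(3) your formula produces the closed manifold obtained by joining the two boundary copies of $X$ through an external cylinder rather than identifying them directly; for $\Sigma = X\times[0,L]$ this yields $X\times(\R/(L+1)\Z)$ instead of $\Sigma_{gl} = X\times(\R/L\Z)$.

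The paper's remedy is to \emph{cut inside} $\Sigma$ rather than attach anything from outside. Using the product collar $Y\times[0,\epsilon)\subset\Sigma$ that the hypothesis actually supplies, one sets $Z = Y\times\{\epsilon\}$, $t = Y\times[0,\epsilon]$, and $b = \Sigma\setminus(Y\times[0,\epsilon))$; then $\Psi(Z,t,b)$ is literally $\Sigma$ reassembled from two pieces along an internal hypersurface, with no length added. For injectivity, given any $(Z',t',b')$ with $\Psi(Z',t',b')=\Sigma$, one shrinks $\epsilon$ until $Z$ lands in the interior of the bordism $t'$; the slab of $t'$ lying between $Z$ and $Z'$ then furnishes the equivalence $g\colon Z\to Z'$. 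Part~(3) follows because gluing $t$ and $b$ first along $Z$ (recovering $\Sigma$) and then along $X$ is precisely $\Sigma_{gl}$.
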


\begin{proof}
To prove part (1) suppose that $f\colon X \to Y$ is a thick morphism, i.e., it can be factored in the form 
\begin{equation}\label{eq:factorization2}
\xymatrix{
X\cong \emptyset\amalg X\ar[rr]^{t\,\amalg \,\id_{X}}&&Y\amalg Z\amalg X\ar[rr]^{\id_{Y}\,\amalg \,b}&&Y\amalg\emptyset\cong Y
}
\end{equation}
We note that the morphisms $t\colon \emptyset\to Y\amalg Z$ and $b\colon Z\amalg X\to\emptyset$ must both be {\em bordisms} (the only case where say $t$ could possibly be an isometry is $Y=Z=\emptyset$; however that isometry is the same morphism as the empty bordism). Hence the composition $f$ is a bordism. 

Conversely, assume that $\Sigma$ is a bordism from $X$ to $Y$. Then $\Sigma$ can be decomposed as in the following picture:
\[
\begin{tikzpicture}[scale=.7]
\draw[thick,rounded corners](-7,1)--(-5,1).. controls +(right:3cm) and +(left:2cm)..(0,2.5);
\draw[thick,rounded corners](0,2.5).. controls +(right:2cm) and +(left:3cm)..(5,1);
\draw[thick,rounded corners](-7,-1)--(-5,-1).. controls +(right:3cm) and +(left:2cm)..(0,-2.5);
\draw[thick,rounded corners](0,-2.5).. controls +(right:2cm) and +(left:3cm)..(5,-1);
\begin{scope}[shift={(-7,0)}]
\draw[thick](-0,1) .. controls +(-120:.3cm) and +(90:.3cm).. (-0.25,0).. controls +(-90:.3cm) and +(120:.3cm).. (0,-1);
\draw[thin](0,1) .. controls +(-60:.3cm) and +(90:.3cm).. (.25,0).. controls +(-90:.3cm) and +(60:.3cm).. (0,-1);
\end{scope}
\begin{scope}[shift={(-5,0)}]
\draw[thick](-0,1) .. controls +(-120:.3cm) and +(90:.3cm).. (-0.25,0).. controls +(-90:.3cm) and +(120:.3cm).. (0,-1);
\draw[thin](0,1) .. controls +(-60:.3cm) and +(90:.3cm).. (.25,0).. controls +(-90:.3cm) and +(60:.3cm).. (0,-1);
\end{scope}
\begin{scope}[shift={(5,0)}]
\draw[thick](-0,1) .. controls +(-120:.3cm) and +(90:.3cm).. (-0.25,0).. controls +(-90:.3cm) and +(120:.3cm).. (0,-1);
\draw[thick](0,1) .. controls +(-60:.3cm) and +(90:.3cm).. (.25,0).. controls +(-90:.3cm) and +(60:.3cm).. (0,-1);
\end{scope}
\draw[thick]($(0,-1.5)+(50:2)$) arc (50:130:2);
\draw[thick]($(0,1.5)+(-40:2)$) arc (-40:-140:2);
\node at (-5,-1.5){$Z$};
\node at (5,-1.5){$X$};
\node at (-7,-1.5){$Y$};
\node at (0,-1.5){$b$};
\node at (-6,0){$t$};
\end{tikzpicture}
\]
Here $t=Y\times [0,\epsilon]\subset \Sigma$, $Z=Y\times \{\epsilon\}$ and $b=\Sigma\setminus (Y\times [0,\epsilon))$ are bordisms, where $\epsilon>0$ is chosen suitably so that $Y\subset \Sigma$ has a neighborhood isometric to $Y\times [0,2\epsilon)$ equipped with the product metric. Regarding $t$  as a Riemannian bordism from $\emptyset$ to $Y\amalg Z$, and similarly $b$ as a Riemannian bordism  from $Z\amalg X$ to $\emptyset$,  it is clear from the construction that the composition \eqref{eq:factorization2} is $\Sigma$.

To show that $\Psi$ is injective, let $[Z',t',b']\in \wh{\RBord{d}}(X,Y)$, and let $\Sigma=\Psi([Z',t',b'])\in \RBord{d}$ be the composition \eqref{eq:factorization}. In other words, we have a decomposition of the bordism $\Sigma$ into two pieces $t'$, $b'$ which intersect along $Z'$. Now let $(Z,t,b)$ be the triple constructed in the proof of part (1) above. By choosing $\epsilon$ small enough, we can assume that $Z=Y\times \epsilon\subset \Sigma$ is in the interior of the bordism $t'$, and we obtain a decomposition of the bordism $\Sigma$ as shown in the picture below.
\[
\begin{tikzpicture}[scale=.7]
\draw[thick,rounded corners](-7,1)--(-5,1).. controls +(right:3cm) and +(left:2cm)..(0,2.5);
\draw[thick,rounded corners](0,2.5).. controls +(right:2cm) and +(left:3cm)..(5,1);
\draw[thick,rounded corners](-7,-1)--(-5,-1).. controls +(right:3cm) and +(left:2cm)..(0,-2.5);
\draw[thick,rounded corners](0,-2.5).. controls +(right:2cm) and +(left:3cm)..(5,-1);
\begin{scope}[shift={(-7,0)}]
\draw[thick](-0,1) .. controls +(-120:.3cm) and +(90:.3cm).. (-0.25,0).. controls +(-90:.3cm) and +(120:.3cm).. (0,-1);
\draw[thin](0,1) .. controls +(-60:.3cm) and +(90:.3cm).. (.25,0).. controls +(-90:.3cm) and +(60:.3cm).. (0,-1);
\end{scope}
\begin{scope}[shift={(-5,0)}]
\draw[thick](-0,1) .. controls +(-120:.3cm) and +(90:.3cm).. (-0.25,0).. controls +(-90:.3cm) and +(120:.3cm).. (0,-1);
\draw[thin](0,1) .. controls +(-60:.3cm) and +(90:.3cm).. (.25,0).. controls +(-90:.3cm) and +(60:.3cm).. (0,-1);
\end{scope}
\begin{scope}[shift={(5,0)}]
\draw[thick](-0,1) .. controls +(-120:.3cm) and +(90:.3cm).. (-0.25,0).. controls +(-90:.3cm) and +(120:.3cm).. (0,-1);
\draw[thick](0,1) .. controls +(-60:.3cm) and +(90:.3cm).. (.25,0).. controls +(-90:.3cm) and +(60:.3cm).. (0,-1);
\end{scope}
\begin{scope}[shift={(0,+1.5)}]
\draw[thick](-0,1) .. controls +(-120:.3cm) and +(90:.3cm).. (-0.25,0).. controls +(-90:.3cm) and +(120:.3cm).. (0,-1);
\draw[thin](0,1) .. controls +(-60:.3cm) and +(90:.3cm).. (.25,0).. controls +(-90:.3cm) and +(60:.3cm).. (0,-1);
\end{scope}
\begin{scope}[shift={(0,-1.5)}]
\draw[thick](-0,1) .. controls +(-120:.3cm) and +(90:.3cm).. (-0.25,0).. controls +(-90:.3cm) and +(120:.3cm).. (0,-1);
\draw[thin](0,1) .. controls +(-60:.3cm) and +(90:.3cm).. (.25,0).. controls +(-90:.3cm) and +(60:.3cm).. (0,-1);
\end{scope}
\draw[thick]($(0,-1.5)+(50:2)$) arc (50:130:2);
\draw[thick]($(0,1.5)+(-40:2)$) arc (-40:-140:2);
\node at (-5,-1.5){$Z$};
\node at (5,-1.5){$X$};
\node at (-7,-1.5){$Y$};
\node at (0,-3.2){$Z'$};
\node at (2.5,0){$b'$};
\node at (-6,0){$t$};
\node at (-2.5,0){$g$};
\node at (-3.5,-3){$\overset{\underbrace{\qquad\qquad\qquad\qquad\qquad\qquad}}{t'}$};
\node at (0,3){$\underset{\overbrace{\qquad\qquad\qquad\qquad\qquad\qquad\qquad\qquad}}{b}$};
\end{tikzpicture}
\]
Regarding $g$ as a bordism from $Z$ to $Z'$ we see that 
\[
t'=(\id_{Y}\amalg g)\circ t
\qquad \text{and}\qquad 
b=b'\circ (g\amalg \id_{X}),
\]
which implies that the triple $(Z,t,b)$ and $(Z',t',b')$ are equivalent in the sense of  Definition \ref{def:thickened}. If $[Z'',t'',b'']\in\wh{\RBord{d}}$ is another element with $\Psi([Z'',t'',b''])=\Sigma$, then by choosing $\epsilon$ small enough we conclude $[Z'',t'',b'']=[Z,t,b]=[Z',t',b']$.

For the proof of part (3), we decompose as in the proof of (1) the bordism $\Sigma$ into two pieces $t$ and $b$ by cutting it along the one-codimensional submanifold $Z=X\times\{\epsilon\}$ (here $Y=X$ since $\Sigma$ is an {\em endomorphism}). We regard $t$ as a bordism from $\emptyset$ to $X\amalg Z$ and $b$ as a bordism from $Z\amalg X$ to $\emptyset$. Then $\tr(\Sigma)$ is given by the composition 
\[
\xymatrix{
\emptyset\ar[r]^<>(.5)t&X\amalg Z\ar[rr]^{s_{X,Z}}&&Z\amalg X\ar[r]^<>(.5)b&\emptyset
}.
\]
which geometrically means to glue the two bordisms along $X$ and $Z$. Gluing first along $Z$ we obtain the Riemannian manifold $\Sigma$, then gluing along $X$ we get the closed Riemannian manifold $\Sigma_{gl}$.
\end{proof}
\section{Properties of the trace pairing}
The goal of this section is the proof of our main theorem \ref{thm:main2} according to which our trace pairing is symmetric, additive and multiplicative. There are three subsections devoted to the proof of these three properties, plus a subsection on braided monoidal and balanced monoidal categories needed for the multiplicative property. Each proof will be based on first proving the following analogous properties for the  trace $\wh\tr(\wh f)$ of thickened endomorphisms $\wh f\in \wh \sC(X,X)$:

\begin{thm}\label{thm:whtr} Let $\sC$ be a monoidal category, equipped with a natural family of  isomorphisms $s=s_{X,Y}\colon X\otimes Y\to Y\otimes X$. Then the trace map 
\[
\wh\tr\colon \wh\sC(X,X)\ra \sC(\one,\one)
\]
has the following properties:
\begin{enumerate}
\item (symmetry) $\wh\tr(\wh f\circ g)=\wh\tr(g\circ\wh f)$ for $\wh f\in \wh\sC(X,Y)$, $g\in \sC(Y,X)$;
\item (additivity) If $\sC$ is an additive category with distributive monoidal structure (see Definition \ref{def:distributive}), then $\wh\tr$ is a linear map;
\item \label{mult} (multiplicativity) $\wh\tr(\wh f_1\otimes \wh f_2)=\wh\tr(\wh f_1)\otimes\wh\tr(\wh f_2)$
for $\wh f_1\in \wh\sC(X_1,X_1)$, $\wh f_2\in \wh\sC(X_2,X_2)$, provided $\sC$ is  a symmetric monoidal category with braiding $s$. More generally, this property holds if $\sC$ is a {\em balanced monoidal category} (see Definition \ref{def:balanced}). 
\end{enumerate}
\end{thm}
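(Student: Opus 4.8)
The plan is to establish the three properties of $\wh\tr$ one at a time, working directly from the defining formula $\wh\tr[Z,t,b]=b\circ s_{X,Z}\circ t$, and then to deduce Theorem~\ref{thm:main2}. \emph{Symmetry} is the easiest: I would unwind both sides using the composition rules for thickened morphisms, getting $\wh f\circ g=[Z,t,b\circ(\id_Z\otimes g)]\in\wh\sC(Y,Y)$ and $g\circ\wh f=[Z,(g\otimes\id_Z)\circ t,b]\in\wh\sC(X,X)$, so that $\wh\tr(\wh f\circ g)=b\circ(\id_Z\otimes g)\circ s_{Y,Z}\circ t$ while $\wh\tr(g\circ\wh f)=b\circ s_{X,Z}\circ(g\otimes\id_Z)\circ t$. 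These agree by the single instance $(\id_Z\otimes g)\circ s_{Y,Z}=s_{X,Z}\circ(g\otimes\id_Z)$ of the naturality of $s$; in pictures one is merely sliding the box $g$ through the crossing. I expect no difficulty here.

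For \emph{additivity} I would first record the abelian group structure on $\wh\sC(X,Y)$ available when $\sC$ is additive with distributive monoidal structure (Definition~\ref{def:distributive}): the sum of $[Z_1,t_1,b_1]$ and $[Z_2,t_2,b_2]$ is $[Z_1\oplus Z_2,t,b]$, where, under the distributivity isomorphisms $Y\otimes(Z_1\oplus Z_2)\cong(Y\otimes Z_1)\oplus(Y\otimes Z_2)$ and $(Z_1\oplus Z_2)\otimes X\cong(Z_1\otimes X)\oplus(Z_2\otimes X)$, the morphism $t$ is built from $t_1,t_2$ and the two inclusions and $b$ from $b_1,b_2$ and the two projections; the zero element is $[0,0,0]$ and $-[Z,t,b]=[Z,-t,b]$. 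Well-definedness follows because an equivalence $g_i\colon Z_i\to Z_i'$ of triples induces the equivalence $g_1\oplus g_2$, and one checks that $\Psi$ and pre/post-composition are additive. Additivity of $\wh\tr$ is then a short computation: in $\wh\tr[Z_1\oplus Z_2,t,b]=b\circ s_{X,Z_1\oplus Z_2}\circ t$, naturality of $s$ identifies $s_{X,Z_1\oplus Z_2}$ with $s_{X,Z_1}\oplus s_{X,Z_2}$, and then the relations $\pi_j\circ\iota_i=\delta_{ij}\,\id$ annihilate the cross terms, leaving $b_1\circ s_{X,Z_1}\circ t_1+b_2\circ s_{X,Z_2}\circ t_2$. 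The only nuisance is bookkeeping the distributivity isomorphisms, which is cleanest in the string calculus, where they are invisible.

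\emph{Multiplicativity} is the substantive part, and here $s$ must come from a braiding $c$ (with $c=s$ in the symmetric case and $s_{X,Y}=(\id_Y\otimes\theta_X)\circ c_{X,Y}$ in the balanced case, Definition~\ref{def:balanced}). I would first define $\wh f_1\otimes\wh f_2:=[Z_1\otimes Z_2,t,b]\in\wh\sC(X_1\otimes X_2,Y_1\otimes Y_2)$ for $\wh f_i=[Z_i,t_i,b_i]$, where $t$ is $t_1\otimes t_2$ followed by the rearrangement $Y_1\otimes Z_1\otimes Y_2\otimes Z_2\to Y_1\otimes Y_2\otimes Z_1\otimes Z_2$ assembled from $c_{Z_1,Y_2}$, and $b$ is $b_1\otimes b_2$ precomposed with the analogous rearrangement $Z_1\otimes Z_2\otimes X_1\otimes X_2\to Z_1\otimes X_1\otimes Z_2\otimes X_2$ assembled from $c_{Z_2,X_1}$; naturality of $c$ shows this is independent of representatives and satisfies $\Psi(\wh f_1\otimes\wh f_2)=f_1\otimes f_2$. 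Then $\wh\tr(\wh f_1\otimes\wh f_2)=b\circ s_{X_1\otimes X_2,\,Z_1\otimes Z_2}\circ t$, and the goal is to show this equals $(b_1\circ s_{X_1,Z_1}\circ t_1)\otimes(b_2\circ s_{X_2,Z_2}\circ t_2)$. The plan is to expand the large switching isomorphism: the hexagon axioms write $c_{X_1\otimes X_2,\,Z_1\otimes Z_2}$ as a composite of the four elementary braidings $c_{X_i,Z_j}$, after which repeated applications of naturality ``unlink'' the $(X_1,Z_1)$-strands from the $(X_2,Z_2)$-strands and the diagram becomes $\wh\tr(\wh f_1)$ drawn alongside $\wh\tr(\wh f_2)$. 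In the balanced case one additionally replaces each occurrence of $s$ by $(\id\otimes\theta)\circ c$ and uses the twist axiom $\theta_{X_1\otimes X_2}=(\theta_{X_1}\otimes\theta_{X_2})\circ c_{X_2,X_1}\circ c_{X_1,X_2}$ together with the naturality of $\theta$ to reassemble the stray twists into the twists already present in $s_{X_1,Z_1}$ and $s_{X_2,Z_2}$.

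The hard step is precisely this string-diagram computation in the \emph{balanced} (non-symmetric) case: one must keep careful track of over/under crossings and verify that each twist $\theta_{X_i}$ ends up in the right place once the strands are separated, whereas the symmetric case is essentially the classical proof that traces are multiplicative, transcribed to thickened morphisms. I would isolate the twist bookkeeping in a single auxiliary lemma so that the main argument stays readable. Finally, Theorem~\ref{thm:main2} follows: its properties (1)--(3) are obtained from (1)--(3) above by writing $\tr(f,g)=\wh\tr(\wh f\circ g)$ (Lemma~\ref{lem:hat_composition2}), using that pre/post-composition, the additive structure, and the tensor product of thickened morphisms are all compatible with $\Psi$, and invoking \eqref{eq:trace-pairing_trace} for the clauses asserting $\tr(f,g)=\tr(f\circ g)$ when $X$ or $Y$ has the trace property.
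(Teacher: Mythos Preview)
Your plan for parts (1) and (2) matches the paper's proofs exactly, and your outline for part (3) is structurally the same as the paper's argument. The one ingredient you have not identified is Lemma~\ref{lem:over/under_crossing}: in a merely braided (non-symmetric) category, naturality and the hexagon axioms alone do \emph{not} let you ``unlink'' the $(X_1,Z_1)$-strands from the $(X_2,Z_2)$-strands, because the crossings in the expanded diagram do not form a trivial braid. What makes the unlinking possible is that $t_i\colon\one\to Y_i\otimes Z_i$ and $b_i\colon Z_i\otimes X_i\to\one$ have the monoidal unit on one side, so a strand may be passed over \emph{or} under such a box with the same result (this is the content of Lemma~\ref{lem:over/under_crossing}, ultimately because $c_{V,\one}$ and $c^{-1}_{\one,V}$ both reduce to the unit constraints). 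The paper invokes this lemma both in checking $\Psi(\wh f_1\otimes\wh f_2)=f_1\otimes f_2$ and, in the main multiplicativity computation, to slide a strand past $t_1$; without it your ``repeated applications of naturality'' will leave you with a nontrivial braid that cannot be simplified by Proposition~\ref{prop:isotopy} alone. (A minor point: the paper uses $c^{-1}_{X_1,Z_2}$ rather than your $c_{Z_2,X_1}$ in the definition of $b$; either convention works once Lemma~\ref{lem:over/under_crossing} is available, but you should be aware that the two are not equal in the non-symmetric case.)
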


For the tensor product $\wh f_1\otimes \wh f_2\in \wh\sC(X_1\otimes X_2,X_1\otimes X_2)$ of the thickened morphisms $\wh f_i$ see Definition \ref{def:wh_tensor}.

\subsection{The symmetry property of the trace pairing}
\begin{proof}[Proof of part (1) of Theorem \ref{thm:whtr}] Let $\wh f=[Z,t,b]\in \wh\sC(X,Y)$. Then 
\[
\wh f\circ g=[Z,t,b\circ (\id_Z\otimes g)]
\qquad\text{and}\qquad
g\circ\wh f=[Z,(g\otimes \id_Z)\circ t,b]
\]
and hence
\[
\begin{tikzpicture}
[scale=.7,
big/.style={rectangle,rounded corners,fill=white,draw,minimum height=.5cm,minimum width=1cm},
small/.style={rectangle,rounded corners,fill=white,draw,minimum height=.5cm,minimum width=.5cm}]
\node at (-1,0){$\wh\tr(\wh f\circ g)=$};
\begin{scope}[shift={(2,0)}]
\draw[thick, rounded corners](0,3.5)--(0,.5)--(1,-.5)--(1,-3.5);
\draw[thick, rounded corners](1,3.5)--(1,.5)--(0,-.5)--(0,-3.5);
\node[big] at (.5,3.5){$t$};
\node[big] at (.5,-3.5){$b$};
\node[small] at (1,-1.5){$g$};
\node at (1.4,-2.5){$X$};
\node at (1.4,2.5){$Z$};
\node at (-.4,2.5){$Y$};
\end{scope}
\node at(4.5,0){$=$};
\begin{scope}[shift={(6,0)}]
\draw[thick, rounded corners](0,3.5)--(0,.5)--(1,-.5)--(1,-3.5);
\draw[thick, rounded corners](1,3.5)--(1,.5)--(0,-.5)--(0,-3.5);
\node[big] at (.5,3.5){$t$};
\node[big] at (.5,-3.5){$b$};
\node[small] at (0,1.5){$g$};
\node at (1.4,-2.5){$X$};
\node at (1.4,2.5){$Z$};
\node at (-.4,2.5){$Y$};
\end{scope}
\node at (10,0){$=\wh\tr(g\circ \wh f)$};
\end{tikzpicture}
\]
Here the second equality follows from the naturality of the switching isomorphism (see Picture \eqref{eq:s_naturality}).
\end{proof}
\begin{proof}[Proof of part (1) of Theorem \ref{thm:main2}]
Let $\wh g\in \wh\sC(Y,X)$ with $\Psi(\wh g)=g$. Then
\[
\tr(f,g)=\wh\tr(\wh f\circ g)=\wh\tr(g\circ \wh f)=\wh\tr(\wh g\circ f)=\tr(f,g).
\]
Here the second equation is part (1) of Theorem \ref{thm:whtr}, while the third is a consequence of Lemma \ref{lem:hat_composition2}. 
\end{proof}
\subsection{Additivity of the trace pairing}\label{subsec:additivity}

Throughout this subsection we will assume that the category $\sC$ is an {\em additive category with distributive monoidal structure} (see Definitions \ref{def:additive} and \ref{def:distributive} below). Often an additive category  is defined as a category enriched over abelian groups with finite products (or equally coproducts). However, the abelian group structure on the morphism sets is actually {\em determined} by the underlying category $\sC$, and hence  a better point of view is to think of `additive' as a {\em property} of a category $\sC$, rather than specifying additional data. 

\begin{defn}\label{def:additive}
 A category $\sC$ is {\em additive} if 
\begin{enumerate}
\item There is a zero object $0\in \sC$ (an object which is terminal and initial);
\item finite products and coproducts exist;
\item Given a finite collection of objects, the canonical morphism from their coproduct to their product (given by identity maps on coordinates) is an isomorphism. 
\item Any morphism from $X\to Y$ has an additive inverse under the canonical addition (defined below) on $\sC(X,Y)$. 
\end{enumerate}
We remark that the requirement (1) is redundant since an initial object is a coproduct of the empty family of objects, and a terminal object is the product of the same family; the map from the initial to the terminal object is an isomorphism by (3), thus making these objects zero objects. 
\end{defn}

If $\sC$ is an additive category, we will use the notation $X_1\oplus \dots\oplus X_n$ for the coproduct of objects $X_i\in \sC$ (the term {\em direct sum} is used for finite coproducts in additive categories).  Any map
\[
f\colon X=X_1\oplus \dots\oplus X_n\ra Y=Y_1\oplus \dots\oplus Y_m,
\]
amounts to an $m\times n$ matrix of morphisms $f_{ij}\in \sC(X_j,Y_i)$, $1\le i\le m$, $1\le j\le n$, by identifying  $Y$ via property (3) with the {\em product} of the $Y_i$'s. In particular, there are morphisms
\[
\Delta:=\left(\begin{smallmatrix}\id\\\id\end{smallmatrix}\right)
\colon X\ra X\oplus X
\qquad
\nabla:=\left(\begin{smallmatrix}\id&\id\end{smallmatrix}\right)
\colon Y\oplus Y\ra Y
\]
referred to as {\em diagonal map} and {\em fold map}, respectively.
For $f,g\in \sC(X,Y)$ their sum $f+g$ is defined to be the composition
\[
X\overset{\Delta} \ra X\oplus X\overset {f\oplus g}\ra Y\oplus Y\overset\nabla\ra Y
\]
This addition gives $\sC(X,Y)$ the structure of an abelian group. Abusing notation we write $0\colon X\to Y$ for the additive unit which is given by the unique morphism that factors through the zero object. 
Identifying morphisms between direct sums with matrices, their composition corresponds to multiplication of the corresponding matrices.

\begin{defn}\label{def:distributive} A monoidal structure $\otimes$ on an additive category $\sC$ is {\em distributive} if for any objects $X,Y\in \sC$  the functors
\begin{alignat*}{3}
\sC&\ra \sC\qquad &\qquad\qquad \sC&\ra \sC\\
Z&\mapsto Z\otimes X &Z&\mapsto Y\otimes Z
\end{alignat*}
preserve coproducts. In particular, for objects $Z_1,Z_2\in \sC$ we have canonical distributivity isomorphisms
\[
(Z_1\otimes X)\oplus (Z_1\otimes X)\cong (Z_1\oplus Z_2)\otimes X
\quad\text{and}\quad
(Y\otimes Z_1)\oplus (Y\otimes Z_2)\cong Y\otimes (Z_1\oplus Z_2).
\]
In addition, these functors send the initial object $0\in \sC$ (thought of as the coproduct of the empty family of objects of $\sC$) to an initial object of $\sC$; in others words, there are canonical isomorphisms $0\otimes X\cong 0$ and $Y\otimes 0\cong 0$.
\end{defn}

\begin{defn}\label{def:wh_addition}
Given triples $(Z_1,t_1,b_1)$ and $(Z_2,t_2,b_2)$ representing elements of $\wh \sC(X,Y)$ we define their {\em sum} by
\[
(Z_1,t_1,b_1)+(Z_2,t_2,b_2):=\left(Z_1\oplus Z_2, 
\left(\begin{smallmatrix}
t_1\\t_2
\end{smallmatrix}\right),
\left(\begin{smallmatrix}
b_1&b_2
\end{smallmatrix}\right)\right)
\]
Here  $\left(\begin{smallmatrix}
t_1\\t_2
\end{smallmatrix}\right)$
is a morphism from $\one$ to $(Y\otimes Z_1)\oplus (Y\otimes Z_2)\cong Y\otimes (Z_1\oplus Z_1)$ and 
$\left(\begin{smallmatrix}
b_1&b_2
\end{smallmatrix}\right)
$
is a morphism from 
$(Z_1\otimes X)\oplus (Z_2\otimes X)\cong  (Z_1\oplus Z_1)\otimes X$ to $\one$. 
The addition of triples induces a well-defined addition
\[
\wh\sC(X,Y)\times  \wh\sC(X,Y)\overset{+}\ra \wh\sC(X,Y)
\]
since, if $g_i\colon Z_i\to Z'_i$ is an equivalence from $(Z_i,t_i,b_i)$ to $(Z'_i,t'_i,b'_i)$, then $g_1\oplus g_2\colon Z_1\oplus Z_2\to Z'_1\oplus Z'_2$ is an equivalence from $(Z_1,t_1,b_1)+(Z_2,t_2,b_2)$ to $(Z'_1,t'_1,b'_1)+(Z'_2,t'_2,b'_2)$.
\end{defn}

\begin{lem} The above addition gives $\wh\sC(X,Y)$ the structure of an abelian group. 
\end{lem}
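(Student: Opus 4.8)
\emph{Plan of proof.} The plan is to verify the abelian group axioms for $\bigl(\wh\sC(X,Y),+\bigr)$ one at a time — commutativity, associativity, existence of a neutral element, and existence of inverses — in each case by exhibiting an explicit morphism (or a short zig-zag of morphisms) between the triples representing the two sides and invoking Definition~\ref{def:thickened}. (Well-definedness of $+$ on equivalence classes was already checked in Definition~\ref{def:wh_addition}.) Throughout I will write maps between direct sums as matrices of morphisms and compose them by the usual matrix formula, as explained after Definition~\ref{def:additive}, and I will silently insert the canonical distributivity isomorphisms $Y\otimes(Z_1\oplus Z_2)\cong(Y\otimes Z_1)\oplus(Y\otimes Z_2)$ and $(Z_1\oplus Z_2)\otimes X\cong(Z_1\otimes X)\oplus(Z_2\otimes X)$ of Definition~\ref{def:distributive} wherever they are needed.

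For commutativity I would use the swap isomorphism $\sigma=\left(\begin{smallmatrix}0&\id\\\id&0\end{smallmatrix}\right)\colon Z_1\oplus Z_2\to Z_2\oplus Z_1$; a one-line matrix computation gives $(\id_Y\otimes\sigma)\circ\left(\begin{smallmatrix}t_1\\t_2\end{smallmatrix}\right)=\left(\begin{smallmatrix}t_2\\t_1\end{smallmatrix}\right)$ and $\left(\begin{smallmatrix}b_2&b_1\end{smallmatrix}\right)\circ(\sigma\otimes\id_X)=\left(\begin{smallmatrix}b_1&b_2\end{smallmatrix}\right)$, so $\sigma$ is an equivalence from $(Z_1,t_1,b_1)+(Z_2,t_2,b_2)$ to $(Z_2,t_2,b_2)+(Z_1,t_1,b_1)$. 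Associativity is the identical argument with the associativity isomorphism $(Z_1\oplus Z_2)\oplus Z_3\cong Z_1\oplus(Z_2\oplus Z_3)$, which intertwines the two bracketings of $\left(\begin{smallmatrix}t_1\\t_2\\t_3\end{smallmatrix}\right)$ and of $\left(\begin{smallmatrix}b_1&b_2&b_3\end{smallmatrix}\right)$. For the neutral element I would take $e:=[0,0,0]$ with $0\in\sC$ the zero object; since $(Z,t,b)+(0,0,0)=\bigl(Z\oplus 0,\left(\begin{smallmatrix}t\\0\end{smallmatrix}\right),\left(\begin{smallmatrix}b&0\end{smallmatrix}\right)\bigr)$ and every morphism into $Y\otimes 0\cong 0$ vanishes, the canonical isomorphism $\iota\colon Z\to Z\oplus 0$ is an equivalence from $(Z,t,b)$ to this triple, so $[Z,t,b]+e=[Z,t,b]$, and $e$ is two-sided by commutativity. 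The same idea applied to the unique morphism $g\colon Z\to 0$ yields the auxiliary fact that $[Z,t,0]=e$ for every $Z$ and $t$: both conditions of Definition~\ref{def:thickened} hold because any morphism factoring through the zero object is zero.

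The one step requiring an idea rather than bookkeeping — and hence the main obstacle — is the existence of inverses; here I claim $-[Z,t,b]=[Z,t,-b]$, with $-b\in\sC(Z\otimes X,\one)$ supplied by axiom~(4) of Definition~\ref{def:additive}. One computes $(Z,t,b)+(Z,t,-b)=\bigl(Z\oplus Z,\left(\begin{smallmatrix}t\\t\end{smallmatrix}\right),\left(\begin{smallmatrix}b&-b\end{smallmatrix}\right)\bigr)$, and the point is that the diagonal $\Delta_Z=\left(\begin{smallmatrix}\id\\\id\end{smallmatrix}\right)\colon Z\to Z\oplus Z$ is an equivalence from $(Z,t,0)$ to this triple: indeed $(\id_Y\otimes\Delta_Z)\circ t=\left(\begin{smallmatrix}t\\t\end{smallmatrix}\right)$, while $\left(\begin{smallmatrix}b&-b\end{smallmatrix}\right)\circ(\Delta_Z\otimes\id_X)=b\circ\id+(-b)\circ\id=b+(-b)=0$, which is precisely the bottom map of $(Z,t,0)$. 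Composing this equivalence with the equality $[Z,t,0]=e$ from the previous paragraph gives $[Z,t,b]+[Z,t,-b]=e$, which is two-sided by commutativity, completing the proof. The subtlety worth flagging is that there is \emph{no} morphism of triples directly between $(0,0,0)$ and the sum $(Z,t,b)+(Z,t,-b)$ in either direction — one of the two conditions of Definition~\ref{def:thickened} would force $\left(\begin{smallmatrix}t\\t\end{smallmatrix}\right)$, respectively $\left(\begin{smallmatrix}b&-b\end{smallmatrix}\right)$, to vanish — which is exactly why the argument must be routed through the intermediate triple $(Z,t,0)$.
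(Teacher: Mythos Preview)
Your proof is correct and follows essentially the same approach as the paper: both arguments identify the unit as $[0,0,0]$, reduce to showing $[Z,t,0]=[0,0,0]$ via the unique map $Z\to 0$, and then use the diagonal $\Delta_Z\colon Z\to Z\oplus Z$ as an equivalence from $(Z,t,0)$ to $(Z,t,b)+(Z,t,-b)$ to obtain inverses. The paper omits the explicit verification of commutativity and associativity that you include, but additionally records the symmetric descriptions $[Z,0,b]=e$ and $-[Z,t,b]=[Z,-t,b]$ via the fold map $\nabla$; your remark on the necessity of the zig-zag through $(Z,t,0)$ is a nice point the paper leaves implicit.
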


\begin{proof}
We claim:
\begin{enumerate}
\item The additive unit is represented by any triple $(Z,t,b)$ with $t=0$ or $b=0$;
\item the additive inverse of $[Z,t,b]$ is represented by $(Z,-t,b)$ or $(Z,t,-b)$ (here $-t$, $-b$ are the additive inverses to the morphisms $t$ respectively $b$ whose existence is guaranteed by axiom (4) in the definition of an additive category). 
\end{enumerate}
It follows from the description of addition that $[0,0,0]$ is an additive unit in $\wh\sC(X,Y)$. It is easy to check that $0\colon Z\to 0$ is an equivalence from $(Z,t,0)$ to $(0,0,0)$ and that $0\colon 0\to Z$ is an equivalence from $(0,0,0)$ to $(Z,0,b)$, which proves the first claim.

The diagonal map $\Delta\colon Z\to Z\oplus Z$ is an equivalence from $(Z,t,0)$ to 
$(Z,t,b)+(Z,t,-b)=(Z\oplus Z, 
\left(\begin{smallmatrix} t\\t\end{smallmatrix}\right),
\left(\begin{smallmatrix} b&-b\end{smallmatrix}\right))
$.
Similarly, the fold map $\nabla\colon Z\oplus Z\to Z$ is an equivalence from 
$(Z,t,b)+(Z,-t,b)=(Z\oplus Z, 
\left(\begin{smallmatrix} t\\-t\end{smallmatrix}\right),
\left(\begin{smallmatrix} b&b\end{smallmatrix}\right))
$
to $(Z,0,b)$. This proves the second claim.
\end{proof}

\begin{lem}\label{lem:Psi_linear} The map $\Psi\colon\wh\sC(X,Y)\to\sC(X,Y)$ is a homomorphism.
\end{lem}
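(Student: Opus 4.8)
The plan is to evaluate $\Psi$ on the sum of two triples and to recognize the result, through the matrix calculus available in any additive category, as the sum of the values of $\Psi$ on the two summands. The one structural input I would isolate first is that, because the monoidal structure is distributive (Definition \ref{def:distributive}), the endofunctors $-\otimes X$ and $Y\otimes-$ of $\sC$ preserve finite coproducts; a coproduct-preserving functor between additive categories is automatically additive, so these two functors are compatible with the description of a morphism between direct sums as a matrix of morphisms and with the induced addition on hom-sets. In particular the composite $Z\mapsto Y\otimes Z\otimes X$ preserves coproducts, yielding a canonical isomorphism $Y\otimes(Z_1\oplus Z_2)\otimes X\cong(Y\otimes Z_1\otimes X)\oplus(Y\otimes Z_2\otimes X)$ that, by naturality of the associator, is coherent with the bracketings implicit in the factorization \eqref{eq:factorization}.

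Granting this, let $(Z_1,t_1,b_1)$ and $(Z_2,t_2,b_2)$ represent elements of $\wh\sC(X,Y)$ and write $Z:=Z_1\oplus Z_2$, $t:=\left(\begin{smallmatrix}t_1\\ t_2\end{smallmatrix}\right)$, $b:=\left(\begin{smallmatrix}b_1&b_2\end{smallmatrix}\right)$ for their sum as in Definition \ref{def:wh_addition}. Then $\Psi([Z,t,b])$ is the composite
\[
X\cong\one\otimes X\xrightarrow{\ t\otimes\id_X\ }Y\otimes Z\otimes X\xrightarrow{\ \id_Y\otimes b\ }Y\otimes\one\cong Y .
\]
Applying the additive functor $-\otimes\id_X$ to the column $t$ and the additive functor $\id_Y\otimes-$ to the row $b$, and inserting the isomorphism above, I would rewrite this composite as
\[
\bigl(\id_Y\otimes b_1\ \ \ \id_Y\otimes b_2\bigr)\circ\left(\begin{smallmatrix}t_1\otimes\id_X\\ t_2\otimes\id_X\end{smallmatrix}\right),
\]
and by the definition of the addition of morphisms via the diagonal $\Delta$ and the fold map $\nabla$ (under which composition of matrices is their product) this equals $\sum_{i=1}^{2}(\id_Y\otimes b_i)\circ(t_i\otimes\id_X)$. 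Re-inserting the unit constraints, the $i$-th summand is exactly $\Psi([Z_i,t_i,b_i])$, so $\Psi([Z,t,b])=\Psi([Z_1,t_1,b_1])+\Psi([Z_2,t_2,b_2])$. This says $\Psi$ is additive, hence a homomorphism of abelian groups; in particular it carries the additive unit $[0,0,0]$ to $0$.

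The only real content here is coherence bookkeeping: one has to check that the distributivity isomorphisms of Definition \ref{def:distributive} interact correctly with the associator and unit constraints appearing in \eqref{eq:factorization}, and that $-\otimes X$ and $Y\otimes-$ genuinely respect the matrix notation. I expect this to be the main — and essentially the only — obstacle, but a routine one; once those compatibilities are set up the displayed identity is mechanical.
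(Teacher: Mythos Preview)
Your argument is correct and follows essentially the same approach as the paper: both compute $\Psi$ of the sum by writing $\id_Y\otimes b$ and $t\otimes\id_X$ as a row and a column under the distributivity isomorphism $Y\otimes(Z_1\oplus Z_2)\otimes X\cong(Y\otimes Z_1\otimes X)\oplus(Y\otimes Z_2\otimes X)$, and then multiplying the resulting matrices. Your explicit remark that coproduct-preserving functors between additive categories are additive (hence respect the matrix calculus) makes explicit a step the paper leaves implicit, but otherwise the proofs are the same.
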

\begin{proof} Using the notation from the definition above, we want to show that $\Psi(Z,t,b)=\Psi(Z_1,t_1,b_1)+\Psi(Z_1,t_1,b_1)$. We recall (see Equation \eqref{eq:factorization} of the introduction and the paragraph following it) that $\Psi(Z,t,b)\in \sC(X,Y)$ is given by the composition
\begin{equation*}
Y\cong Y\otimes \one\overset{\id_Y\otimes b}\longleftarrow Y\otimes Z\otimes X\overset{t\otimes \id_X}\longleftarrow \one\otimes X\cong X
\end{equation*}
Here we write the arrows from right to left in order that composition corresponds to matrix multiplication. Identifying $Y\otimes Z\otimes X$ with $(Y\otimes Z_1\otimes X)\oplus (Y\otimes Z_2\otimes X)$, these maps are given by the following matrices:
\[
\id_Y\otimes b=
\left(\begin{matrix}
\id_Y\otimes b_1&\id_Y\otimes b_2
\end{matrix}\right)
\qquad
t\otimes \id_X=
\left(\begin{matrix}
t_1\otimes \id_X\\t_2\otimes \id_X
\end{matrix}\right)
\]
Hence $\Psi(Z,t,b)$ is given by the matrix product
\begin{align*}
&\left(\begin{matrix}
\id_Y\otimes b_1&\id_Y\otimes b_2
\end{matrix}\right)
\left(\begin{matrix}
t_1\otimes \id_X\\t_2\otimes \id_X
\end{matrix}\right)\\
=&(\id_Y\otimes b_1)\circ (t_1\otimes \id_X)+(\id_Y\otimes b_2)\circ (t_2\otimes \id_X)\\
=&\Psi(Z_1,t_1,b_1)+\Psi(Z_2,t_2,b_2)
\end{align*}
\end{proof}
\begin{proof}[Proof of part (2) of Theorem \ref{thm:whtr}]
We recall from the definition of $\wh\tr$ (see Equation \eqref{eq:whtr}) that $\wh\tr(Z,t,b)\in \sC(\one,\one)$ is given by the composition
\begin{equation*}
\one\overset{b}\longleftarrow Z\otimes X\overset{s_{X,Z}}\longleftarrow X\otimes Z\overset t\longleftarrow \one
\end{equation*}
Identifying $ Z\otimes X$ with $(Z_1\otimes X)\oplus (Z_2\otimes X)$, and $ X\otimes Z$ with $(X\otimes Z_1)\oplus (X\otimes Z_2)$, this composition is given by the following matrix product:
\begin{align*}
&\left(\begin{matrix}
b_1&b_2
\end{matrix}\right)
\left(\begin{matrix}
s_{X,Z_1}&0\\
0&s_{X,Z_2}
\end{matrix}\right)
\left(\begin{matrix}
t_1\\t_2
\end{matrix}\right)\\
=&b_1\circ s_{X,Z_1}\circ t_1+
b_2\circ s_{X,Z_2}\circ t_2\\
=&\wh\tr(Z_1,t_1,b_1)+\wh\tr(Z_2,t_2,b_2)
\end{align*}
\end{proof}
To show that the additivity of $\wh\tr$ implies that the pairing $\tr(f,g)$ is linear in $f$ and $g$, we will need the following fact.

\begin{lem} The composition map
$\wh\sC(X,Y)\times \wh\sC(Y,X)\ra \wh \sC(Y,Y)$ is bilinear.
\end{lem}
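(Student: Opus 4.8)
The plan is to reduce bilinearity of this composition to facts already in hand: that $\Psi$ is a group homomorphism (Lemma \ref{lem:Psi_linear}), that composition in an additive category is bilinear, and that the monoidal product is additive in each variable --- the last being a consequence of the distributivity hypothesis, since a coproduct-preserving functor between additive categories automatically respects the biproduct-defined addition of morphisms. First I would invoke Lemma \ref{lem:hat_composition2} to rewrite the composition map as $(\wh f_1,\wh f_2)\mapsto \wh f_1\circ\Psi(\wh f_2)=\Psi(\wh f_1)\circ\wh f_2\in\wh\sC(Y,Y)$. Then linearity in the first variable is the postcomposition map $\sC(X,Y)\to\wh\sC(Y,Y)$, $f\mapsto f\circ\wh f_2$, followed by $\Psi$; linearity in the second variable is the precomposition map $\sC(Y,X)\to\wh\sC(Y,Y)$, $g\mapsto\wh f_1\circ g$, followed by $\Psi$. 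So it suffices to show that precomposing and postcomposing a fixed thickened morphism with ordinary morphisms is additive.

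For precomposition, write $\wh f_1=[Z,t,b]$, so that $g\mapsto\wh f_1\circ g=[Z,t,b\circ(\id_Z\otimes g)]$, and factor this through $\sC(Z\otimes Y,\one)$ as $g\mapsto b\circ(\id_Z\otimes g)$ followed by $\bar b\mapsto[Z,t,\bar b]$. The first arrow is additive because the functor $W\mapsto Z\otimes W$ preserves coproducts (Definition \ref{def:distributive}), hence is additive on morphism groups, and post-composition with $b$ is additive. For the second arrow I would reuse the device from the proof that $\wh\sC(X,Y)$ is an abelian group: by Definition \ref{def:wh_addition} one has $[Z,t,\bar b]+[Z,t,\bar b']=\bigl[Z\oplus Z,\left(\begin{smallmatrix}t\\t\end{smallmatrix}\right),\left(\begin{smallmatrix}\bar b&\bar b'\end{smallmatrix}\right)\bigr]$, and the diagonal $\Delta\colon Z\to Z\oplus Z$ is an equivalence between this triple and $(Z,t,\bar b+\bar b')$: the top compatibility holds because $(\id_Y\otimes\Delta)\circ t=\left(\begin{smallmatrix}t\\t\end{smallmatrix}\right)$, and the bottom because $\left(\begin{smallmatrix}\bar b&\bar b'\end{smallmatrix}\right)\circ(\Delta\otimes\id_Y)=\bar b+\bar b'$, both under the distributivity isomorphisms. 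Postcomposition is handled symmetrically, with $\wh f_2=[Z,t,b]$, the fold map $\nabla\colon Z\oplus Z\to Z$ in place of $\Delta$, and the functor $W\mapsto W\otimes Z$ in place of $W\mapsto Z\otimes W$.

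I expect the only genuine obstacle to be this second factorization step, namely checking that $\bar b\mapsto[Z,t,\bar b]$ and dually $\bar t\mapsto[Z,\bar t,b]$ respect addition. This forces one to unwind the definition of the equivalence relation on triples together with the block-matrix calculus for maps between direct sums, but it is entirely parallel to the $\Delta$- and $\nabla$-computations already performed for the group structure on $\wh\sC$, so no new idea is needed; that these maps send $0$ to the additive unit $[0,0,0]$ and negatives to negatives is likewise contained in that earlier argument. Once additivity of pre- and postcomposition is established, bilinearity of the composition map --- and hence, after composing with the homomorphism $\wh\tr$, bilinearity of the trace pairing --- follows formally.
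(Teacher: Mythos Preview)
Your argument is correct in substance, and the paper itself gives no proof (``analogous to the previous two proofs, and so we leave it to the reader''), so there is little to compare against beyond the block-matrix computations in the proofs of Lemma~\ref{lem:Psi_linear} and of part~(2) of Theorem~\ref{thm:whtr}. Your route---factor the composition through $\Psi$ using Lemma~\ref{lem:hat_composition2}, then reduce to additivity of the mixed compositions $\sC\times\wh\sC\to\wh\sC$ and $\wh\sC\times\sC\to\wh\sC$, and verify the latter by the $\Delta$/$\nabla$ equivalences---is equivalent to the direct matrix computation the paper has in mind: both come down to the identity $\left(\begin{smallmatrix}\bar b&\bar b'\end{smallmatrix}\right)\circ(\Delta\otimes\id)=\bar b+\bar b'$ and its dual under distributivity.

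One expository glitch: the sentence ``linearity in the first variable is the postcomposition map \dots\ followed by $\Psi$'' has the order reversed. You mean that the map $\wh f_1\mapsto\Psi(\wh f_1)\circ\wh f_2$ factors as $\Psi$ \emph{followed by} postcomposition $f\mapsto f\circ\wh f_2$, so additivity of $\Psi$ reduces you to additivity of postcomposition. As written it reads as composing in the wrong direction. The mathematics you then carry out is the correct one, so this is purely a phrasing issue.
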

The proof of this lemma is analogous to the previous two proofs, and so we leave it to the reader.

\begin{proof}[Proof of part (2) of Theorem \ref{thm:main2}]
Let  $f_1,f_2\in \sC(X,Y)$, $\wh f_1,\wh f_2\in \wh\sC(X,Y)$ with $\Psi(\wh f_i)=f_i$, and $g\in \sC^{tk}(Y,X)$. Then by Lemma \ref{lem:Psi_linear} $\Psi(\wh f_1+\wh f_2)=f_1+f_2$, and hence 
\begin{align*}
\tr(f_1+f_2,g)=&\wh \tr((\wh f_1+\wh f_2)\circ g)=\tr(\wh f_1\circ g+\wh f_2\circ g)\\
=&\wh\tr(\wh f_1\circ g)+\wh\tr( \wh f_2\circ g)
=\tr(f_1,g)+\tr(f_2,g)
\end{align*}
\end{proof}
\subsection{Braided and balanced monoidal categories}
We recall from \cite[Definition 2.1]{JS2} that a braided monoidal category is a monoidal category equipped with a natural family of isomorphisms $c=c_{X,Y}\colon X\otimes Y\to Y\otimes X$ such that 
\begin{equation}\label{eq:relations}
\begin{tikzpicture}
[scale=.7,
big/.style={rectangle,rounded corners,fill=white,draw,minimum height=.5cm,minimum width=1cm},
small/.style={rectangle,rounded corners,fill=white,draw,minimum height=.5cm,minimum width=.5cm}]
\begin{scope}[shift={(0,0)}]
\draw[thick, rounded corners](-1,2)node[above]{$X$}--(-1,-2)node[below]{$Y$};
\draw[thick, rounded corners](0,2)node[above]{$Y$}--(0,-2)node[below]{$Z$};
\draw[thick, rounded corners](1,2)node[above]{$Z$}--(1,-2)node[below]{$X$};
\node[rectangle,rounded corners,fill=white,draw,minimum height=.5cm,minimum width=2cm] at (0,0){$c_{X,Y\otimes Z}$};
\end{scope}
\node at (2,0){$=$};
\begin{scope}[shift={(4,0)}]
\draw[thick, rounded corners](-1,2)node[above]{$X$}--(-1,-2)node[below]{$Y$};
\draw[thick, rounded corners](0,2)node[above]{$Y$}--(0,-2)node[below]{$Z$};
\draw[thick, rounded corners](1,2)node[above]{$Z$}--(1,-2)node[below]{$X$};
\node[big] at (-.5,1){$c_{X,Y}$};
\node[big] at (.5,-1){$c_{X,Z}$};
\end{scope}
\node at (7,0){and};
\begin{scope}[shift={(10,0)}]
\draw[thick, rounded corners](-1,2)node[above]{$X$}--(-1,-2)node[below]{$Z$};
\draw[thick, rounded corners](0,2)node[above]{$Y$}--(0,-2)node[below]{$X$};
\draw[thick, rounded corners](1,2)node[above]{$Z$}--(1,-2)node[below]{$Y$};
\node[rectangle,rounded corners,fill=white,draw,minimum height=.5cm,minimum width=2cm] at (0,0){$c_{X\otimes Y,Z}$};
\end{scope}
\node at (12,0){$=$};
\begin{scope}[shift={(14,0)}]
\draw[thick, rounded corners](-1,2)node[above]{$X$}--(-1,-2)node[below]{$Z$};
\draw[thick, rounded corners](0,2)node[above]{$Y$}--(0,-2)node[below]{$X$};
\draw[thick, rounded corners](1,2)node[above]{$Z$}--(1,-2)node[below]{$Y$};
\node[big] at (-.5,-1){$c_{X,Z}$};
\node[big] at (.5,1){$c_{Y,Z}$};
\end{scope}
\end{tikzpicture}
\end{equation}
There is a close relationship between braided monoidal categories and the braid groups $B_n$. We recall that the elements of $B_n$ are {\em braids with  $n$ strands}, consisting of isotopy classes of $n$ non-intersecting piecewise smooth curves $\gamma_i\colon [0,1]\to \R^3$, $i=1,\dots,n$  with endpoints at $\{1,\dots,n\}\times\{0\}\times \{0,1\}$. One requires that the $z$\nb-coordinate of each curve is strictly decreasing (so that strands are ``going down"). Composition of braids is defined by concatenation of their strands; e.g., in $B_2$  we have the composition
\[
\begin{tikzpicture}
[scale=.7,
big/.style={rectangle,rounded corners,fill=white,draw,minimum height=.6cm,minimum width=2cm},
small/.style={rectangle,rounded corners,fill=white,draw,minimum height=.6cm,minimum width=.6cm}]
\begin{scope}[shift={(0,.5)}]
\draw[thick, rounded corners](0,.5)--(0,0)--(1,-1)--(1,-1.5);
\node[circle,fill=white] at (.5,-.5){} ;
\draw[thick, rounded corners](1,.5)--(1,0)--(0,-1)--(0,-1.5);
\end{scope}
\node at (1.5,0){$\circ$};
\begin{scope}[shift={(2,.5)}]
\draw[thick, rounded corners](1,.5)--(1,0)--(0,-1)--(0,-1.5);
\node[circle,fill=white] at (.5,-.5){} ;
\draw[thick, rounded corners](0,.5)--(0,0)--(1,-1)--(1,-1.5);
\end{scope}
\node at (4,0){$=$};
\begin{scope}[shift={(5,0)}]
\draw[thick, rounded corners](1,1.5)--(1,1)--(0,0)--(1,-1)--(1,-1.5);
\node[circle,fill=white] at (.5,.5){} ;
\node[circle,fill=white] at (.5,-.5){} ;
\draw[thick, rounded corners](0,1.5)--(0,1)--(1,0)--(0,-1)--(0,-1.5);
\end{scope}
\node at (7,0){$=$};
\begin{scope}[shift={(8,0)}]
\draw[thick, rounded corners](0,1.5)--(0,-1.5);
\draw[thick, rounded corners](1,1.5)--(1,-1.5);
\end{scope}
\end{tikzpicture}
\]
The last equality follows from the obvious isotopy in $\R^3$, also known as the second {\em Reidemeister move}. The three Reidemeister moves are used to understand isotopies of arcs in $\R^3$ via their projections to the plane $\R^2$. Generically, the isotopy projects to an isotopy in the plane but there are three codimension one singularities where this does not happen: a cusp, a tangency and a triple point (of immersed arcs in the plane). The corresponding `moves' on the planar projection are the Reidemeister moves; for example, in the above figure one can see a tangency in the middle of moving the braid in the center to the (trivial) braid on the right. A cusp singularity cannot arise for braids but a triple point can: the resulting (third Reidemeister move) isotopies are also known as {\em braid relations}.  A typical example would be the following isotopy (with a triple point in the middle):
\begin{equation}\label{eq:YB}
\begin{tikzpicture}
[scale=.7,
big/.style={rectangle,rounded corners,fill=white,draw,minimum height=.4cm,minimum width=1.5cm},
small/.style={rectangle,rounded corners,fill=white,draw,minimum height=.6cm,minimum width=.6cm}]
\begin{scope}[shift={(0,0)}]
\draw[thick,rounded corners](2,2)node[above]{$$}--(2,.5)--(1,-.5)--(0,-1.5)--(0,-2)node[below]{$$};
\node[circle,fill=white] at (1.5,0){} ;
\node[circle,fill=white] at (.5,-1){} ;
\draw[thick,rounded corners](0,2)node[above]{$$}--(0,1.5)--(1,.5)--(2,-.5)--(2,-2)node[below]{$$};
\node[circle,fill=white] at (.5,1){} ;
\draw[thick,rounded corners](1,2)node[above]{$$}--(1,1.5)--(0,.5)--(0,-.5)--(1,-1.5)--(1,-2)node[below]{$$};
\end{scope}
\node at (4,0){$=$};
\begin{scope}[shift={(6.5,0)}]
\draw[thick,rounded corners](2,2)node[above]{$$}--(2,1.5)--(1,.5)--(0,-.5)--(0,-2)node[below]{$$};
\node[circle,fill=white] at (.5,0){} ;
\draw[thick,rounded corners](0,2)node[above]{$$}--(0,.5)--(1,-.5)--(2,-1.5)--(2,-2)node[below]{$$};
\node[circle,fill=white] at (1.5,-1){} ;
\node[circle,fill=white] at (1.5,1){} ;
\draw[thick,rounded corners](1,2)node[above]{$$}--(1,1.5)--(2,.5)--(2,-.5)--(1,-1.5)--(1,-2)node[below]{$$};
\end{scope}
\end{tikzpicture}
\end{equation}

Thinking of the groups $B_n$ as categories with one object, we can form their disjoint union to obtain the {\em braid category} $\sB:=\coprod_{n=0}^\infty B_n$. The category $\sB$ can be equipped with the structure of a braided monoidal category \cite[Example 2.1]{JS2}; in fact, $\sB$ is equivalent to the {\em free braided monoidal category} generated by one object (this is a special case of Theorem 2.5 of \cite{JS2}). 

More generally, if $\sC_k$ is the free braided monoidal category generated by $k$ objects, there is a braided monoidal functor $F\colon \sC_k\to \sB$ which sends $n$\nb-fold tensor products of the generating objects to the object $n\in \sB$ (whose automorphism group is $B_n$); on morphisms, $F$ sends the structure maps $\alpha_{X,Y,Z}$, $\ell_X$ and $r_X$ (see beginning of Section~\ref{sec:thick}) to the identity, and the braiding isomorphisms $c_{X_i,X_j}$ for generating objects $X_i,X_j$ to the braid
\[
\begin{tikzpicture}
[scale=.7,
big/.style={rectangle,rounded corners,fill=white,draw,minimum height=.6cm,minimum width=2cm},
small/.style={rectangle,rounded corners,fill=white,draw,minimum height=.6cm,minimum width=.6cm}]
\begin{scope}[shift={(0,.5)}]
\draw[thick, rounded corners](0,.5)--(0,0)--(1,-1)--(1,-1.5);
\node[circle,fill=white] at (.5,-.5){} ;
\draw[thick, rounded corners](1,.5)--(1,0)--(0,-1)--(0,-1.5);
\end{scope}
\node at (2,0){$\in B_2$};
\end{tikzpicture}
\]
This suggests to represent $c_{X,Y}$ by an overcrossing and $c^{-1}_{Y,X}$ by an undercrossing in the pictorial representations of morphisms in braided monoidal categories, a convention broadly used in the literature \cite{JS1} that we will adopt. In other words, one defines:
\[
\begin{tikzpicture}
[scale=.7,
big/.style={rectangle,rounded corners,fill=white,draw,minimum height=.8cm,minimum width=1.3cm},
small/.style={rectangle,rounded corners,fill=white,draw,minimum height=.3cm,minimum width=.3cm}]
\begin{scope}[shift={(0,0)}]
\draw[thick,rounded corners](1,.5)node[above]{$Y$}--(1,0)--(0,-1)--(0,-1.5)node[below]{$Y$};
\node[fill=white] at (.5,-.5){};
\draw[thick,rounded corners](0,.5)node[above]{$X$}--(0,0)--(1,-1)--(1,-1.5)node[below]{$X$};
\node at (2,-.5){$:=$};
\end{scope}
\begin{scope}[shift={(3,0)}]
\draw[thick,rounded corners](1,.5)node[above]{$Y$}--(1,-1.5)node[below]{$X$};
\draw[thick,rounded corners](0,.5)node[above]{$X$}--(0,-1.5)node[below]{$Y$};
\node[big] at (.5,-.5){$c_{X,Y}$};
\end{scope}
\node at (6,-.5){$\text{and}$};
\begin{scope}[shift={(8,0)}]
\draw[thick,rounded corners](0,.5)node[above]{$X$}--(0,0)--(1,-1)--(1,-1.5)node[below]{$X$};
\node[fill=white] at (.5,-.5){};
\draw[thick,rounded corners](1,.5)node[above]{$Y$}--(1,0)--(0,-1)--(0,-1.5)node[below]{$Y$};
\node at (2,-.5){$:=$};
\end{scope}
\begin{scope}[shift={(11,0)}]
\draw[thick,rounded corners](1,.5)node[above]{$Y$}--(1,-1.5)node[below]{$X$};
\draw[thick,rounded corners](0,.5)node[above]{$X$}--(0,-1.5)node[below]{$Y$};
\node[big] at (.5,-.5){$c^{-1}_{Y,X}$};
\end{scope}
\end{tikzpicture}
\]

The key result,  \cite[Cor.\ 2.6]{JS2}, is that for any objects $X,Y\in \sC_k$ the map 
\[
F\colon\sC_k(X,Y)\to \sB(F(X),F(Y))
\]
is a bijection. One step in the argument is to realize that the Yang-Baxter relations ~\ref{eq:YB} follow from the relations \ref{eq:relations} together with the naturality of the braiding isomorphism $c$. An immediate consequence is the following statement which we will refer to below. 

\begin{prop}{\bf (Joyal-Street)}\label{prop:isotopy} Let 
$f,g\colon X_1\otimes \dots \otimes X_k\to X_{\sigma(1)}\otimes \dots \otimes X_{\sigma(k)}$
be morphisms of a braided monoidal category $\sC$ which are in the image of the tautological functor $T\colon \sC_k\to \sC$ which sends the $i$\nb-th generating object of the free braided monoidal category $\sC_k$ to $X_i$. If $f=T(\wt f)$, $g=T(\wt g)$ and the associated braids $F(\wt f), F(\wt g)\in B_k$ agree, then $f=g$.
\end{prop}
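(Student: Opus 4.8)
The plan is to reduce Proposition~\ref{prop:isotopy} to the cited result \cite[Cor.~2.6]{JS2} about the free braided monoidal category $\sC_k$. Recall that $\sC_k$ is the free braided monoidal category on $k$ objects, with $i$-th generating object $G_i$; the tautological functor $T\colon \sC_k\to\sC$ is the unique braided monoidal functor sending $G_i\mapsto X_i$, and $F\colon\sC_k\to\sB$ is the braided monoidal functor sending $G_i$ to a single strand (so that an $n$-fold tensor of generators goes to the object $n\in\sB$ with $\Aut(n)=B_n$).

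First I would invoke the quoted theorem of Joyal--Street: for all objects $A,B\in\sC_k$ the map $F\colon\sC_k(A,B)\to\sB(F(A),F(B))$ is a \emph{bijection}. Apply this with $A=G_1\otimes\dots\otimes G_k$ and $B=G_{\sigma(1)}\otimes\dots\otimes G_{\sigma(k)}$, which are sent by $F$ to the common object $k\in\sB$. Then $F(A)=F(B)=k$, so $F$ restricts to a bijection $\sC_k(A,B)\overset\cong\ra B_k$. Now by hypothesis we are given morphisms $\wt f,\wt g\in\sC_k(A,B)$ with $T(\wt f)=f$, $T(\wt g)=g$, and with $F(\wt f)=F(\wt g)$ in $B_k$. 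Injectivity of $F$ on $\sC_k(A,B)$ forces $\wt f=\wt g$ in $\sC_k$. Applying the functor $T$ gives $f=T(\wt f)=T(\wt g)=g$, which is exactly the assertion.

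The only subtlety worth flagging is making sure the objects $A$ and $B$ really do have equal images under $F$: this uses that $F$ is a (strict, or at least structure-preserving) monoidal functor that collapses all the coherence isomorphisms $\alpha,\ell,r$ to identities and sends each generator to one strand, so any rearrangement $G_{\sigma(1)}\otimes\dots\otimes G_{\sigma(k)}$ of the $k$ generators is carried to the single object $k\in\sB$, independently of $\sigma$. Granting this, the argument is a one-line consequence of bijectivity; there is no real obstacle, since the hard content — that braid-equal words in $\sC_k$ coincide, which is where the Yang--Baxter relation \eqref{eq:YB} and the naturality of $c$ enter — is precisely what \cite[Cor.~2.6]{JS2} packages. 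In short: the proof is just ``$F$ is injective on the relevant hom-set, so $\wt f=\wt g$, hence $f=T(\wt f)=T(\wt g)=g$.''
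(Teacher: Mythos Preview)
Your proposal is correct and matches the paper's approach exactly: the paper does not give a detailed proof but simply states that the proposition is ``an immediate consequence'' of the cited bijection $F\colon\sC_k(X,Y)\to\sB(F(X),F(Y))$ from \cite[Cor.~2.6]{JS2}, and you have spelled out precisely this immediate consequence.
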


We note that a morphism $f$ is in the image of $T$ \Iff it can be written as a composition of the isomorphisms $\alpha$, $\ell$, $r$, $c$ and their inverses. For such a composition it is easy to read off the braid $F(\wt f)$: in the pictorial representation of $f$ we simply ignore all associators and units and replace each occurrence of the braiding isomorphism $c$ (respectively its inverse) by an overcrossing (respectively and undercrossing); then $F(\wh f)$ is the resulting braid.  For example, for

\[
\begin{tikzpicture}
[scale=.7,
big/.style={rectangle,rounded corners,fill=white,draw,minimum height=.4cm,minimum width=1.5cm},
small/.style={rectangle,rounded corners,fill=white,draw,minimum height=.6cm,minimum width=.6cm}]
\node at (-2,0){$f=$};
\begin{scope}[shift={(0,0)}]
\draw[thick](0,2)node[above]{$X$}--(0,-2)node[below]{$Z$};
\draw[thick](1,2)node[above]{$Y$}--(1,-2)node[below]{$Y$};
\draw[thick](2,2)node[above]{$Z$}--(2,-2)node[below]{$X$};
\node[big] at (.5,1.2){$c_{X,Y}^{-1}$};
\node[big] at (1.5,0){$c_{X,Z}$};
\node[big] at (.5,-1.2){$c_{Y,Z}$};
\end{scope}
\node at (5,0){$F(\wt f)=$};
\begin{scope}[shift={(6.5,0)}]
\draw[thick,rounded corners](2,2)node[above]{$$}--(2,.5)--(1,-.5)--(0,-1.5)--(0,-2)node[below]{$$};
\node[circle,fill=white] at (1.5,0){} ;
\node[circle,fill=white] at (.5,-1){} ;
\draw[thick,rounded corners](0,2)node[above]{$$}--(0,1.5)--(1,.5)--(2,-.5)--(2,-2)node[below]{$$};
\node[circle,fill=white] at (.5,1){} ;
\draw[thick,rounded corners](1,2)node[above]{$$}--(1,1.5)--(0,.5)--(0,-.5)--(1,-1.5)--(1,-2)node[below]{$$};
\end{scope}
\end{tikzpicture}
\]

Another result that we will need for the proof of the multiplicativity property are the following isotopy relations. Roughly speaking, they say that if the unit $\one\in \sC$ is involved, then the isotopy does not need to be `relative boundary' as in the previous pictures.

\begin{lem}\label{lem:over/under_crossing}
Let $V$, $W$ be objects of a braided monoidal category $\sC$, and let $f\colon V\to \one$ and $g\colon \one\to V$ be morphisms in $\sC$. Then there are the following relations:
\[
\begin{tikzpicture}
[scale=.7,
big/.style={rectangle,rounded corners,fill=white,draw,minimum height=.6cm,minimum width=1.5cm},
small/.style={rectangle,rounded corners,fill=white,draw,minimum height=.6cm,minimum width=.6cm}]
\begin{scope}[shift={(0,0)}]
\draw[thick, rounded corners](1,.5)node[above]{$W$}--(1,0)--(0,-1)--(0,-2.5)node[below]{$W$};
\node[circle,fill=white] at (.5,-.5){};
\draw[thick, rounded corners](0,.5)node[above]{$V$}--(0,0)--(1,-1)--(1,-1.5);
\node[small] at (1,-1.5){$f$};
\node at (2.5,-.5) {$=$};
\end{scope}
\begin{scope}[shift={(4,0)}]
\draw[thick, rounded corners](1,.5)node[above]{$W$}--(1,-2.5)node[below]{$W$};
\draw[thick, rounded corners](0,.5)node[above]{$V$}--(0,-1.5);
\node[small] at (0,-1.5){$f$};
\node at (2.5,-.5) {$=$};
\end{scope}
\begin{scope}[shift={(8,0)}]
\draw[thick, rounded corners](0,.5)node[above]{$V$}--(0,0)--(1,-1)--(1,-1.5);
\node[small] at (1,-1.5){$f$};
\node[circle,fill=white] at (.5,-.5){};
\draw[thick, rounded corners](1,.5)node[above]{$W$}--(1,0)--(0,-1)--(0,-2.5)node[below]{$W$};
\end{scope}
\end{tikzpicture}
\]
\[
\begin{tikzpicture}
[scale=.7,
big/.style={rectangle,rounded corners,fill=white,draw,minimum height=.6cm,minimum width=1.5cm},
small/.style={rectangle,rounded corners,fill=white,draw,minimum height=.6cm,minimum width=.6cm}]
\begin{scope}[shift={(0,0)}]
\draw[thick, rounded corners](1,-.5)--(1,-1)--(0,-2)--(0,-2.5)node[below]{$W$};
\node[circle,fill=white] at (.5,-1.5){};
\draw[thick, rounded corners](0,.5)node[above]{$V$}--(0,-1)--(1,-2)--(1,-2.5)node[below]{$V$};
\node[small] at (1,-.5){$g$};
\node at (2.5,-.5) {$=$};
\end{scope}
\begin{scope}[shift={(4,0)}]
\draw[thick, rounded corners](1,.5)node[above]{$V$}--(1,-2.5)node[below]{$V$};
\draw[thick, rounded corners](0,-.5)--(0,-2.5)node[below]{$W$};
\node[small] at (0,-.5){$g$};
\node at (2.5,-.5) {$=$};
\end{scope}
\begin{scope}[shift={(8,0)}]
\draw[thick, rounded corners](0,.5)node[above]{$V$}--(0,-1)--(1,-2)--(1,-2.5)node[below]{$V$};
\node[circle,fill=white] at (.5,-1.5){};
\draw[thick, rounded corners](1,-.5)--(1,-1)--(0,-2)--(0,-2.5)node[below]{$W$};
\node[small] at (1,-.5){$g$};
\end{scope}
\end{tikzpicture}
\]
\end{lem}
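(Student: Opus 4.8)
The plan is to deduce both families of relations from exactly two facts about a braided monoidal category $\sC$: the naturality of the braiding $c$ (this is the pictorial rule \eqref{eq:s_naturality} with $c$ in place of $s$, together with the analogous rule for $c^{-1}$), and the standard fact that the braiding with the monoidal unit is trivial, i.e.\ $c_{\one,W}$ and $c_{W,\one}$ coincide with the appropriate composites of the unit constraints $\ell$ and $r$, and likewise for $c^{-1}$. Pictorially, the content is simply that the box $f$ (or $g$), which has the unit object as one of its legs, may be slid along its strand through any crossing it meets; once it has been slid past, the crossing is a crossing with $\one$ and hence disappears.

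First I would record the triviality of the braiding with the unit. This is classical and is contained in \cite{JS2}; for completeness one can give a short argument. One route is to substitute an object equal to $\one$ into one of the hexagon identities \eqref{eq:relations} and use the triangle axiom for the unit: this exhibits a unit-constraint conjugate of $c_{W,\one}$ as an idempotent, and since it is invertible it must be the identity, forcing $c_{W,\one}=\ell_W^{-1}\circ r_W$; the case of $c_{\one,W}$ is symmetric, and the statements for $c^{-1}$ follow by taking inverses. Alternatively, one may invoke Proposition \ref{prop:isotopy}: both $c_{\one,W}$ and $\ell_W^{-1}\circ r_W$ lie in the image of the tautological functor $T\colon \sC_1\to\sC$ sending the generator to $W$, and under $F$ they both map to the trivial braid in $B_1$ (the unit leg contributes no strand), so they agree. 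I would pick whichever version sits most comfortably with the surrounding exposition.

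Granting this, the relations for $f\colon V\to\one$ are one-line consequences. Applying the naturality of $c$ to $f$ and $\id_W$ gives $c_{\one,W}\circ(f\otimes\id_W)=(\id_W\otimes f)\circ c_{V,W}$; composing on the left with the unit constraints and using that $c_{\one,W}$ is a unit isomorphism identifies the left-hand side with $f\otimes\id_W$, which is the asserted equality of the first diagram with the middle one. The equality of the third diagram with the middle one is the identical argument run with the inverse braiding in place of $c$, using that $c^{-1}_{W,\one}$ is again a unit isomorphism. The two relations for $g\colon\one\to V$ are proved in exactly the same way — apply the naturality of $c$ (respectively $c^{-1}$) to $\id_W$ and $g$, then trivialize the resulting crossing with $\one$ — or, if one prefers, by observing that $\sC^{\op}$ is braided and that the $g$-relations are the $f$-relations there.

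The only real bookkeeping is keeping the unit constraints $\ell$, $r$ and the associator straight while translating the string diagrams into honest composites; by the coherence theorem these may be suppressed throughout, so I do not expect any genuine obstacle. The one point worth stating carefully is the triviality of the braiding with $\one$, since everything else is a direct appeal to naturality.
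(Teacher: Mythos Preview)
Your proof is correct and follows essentially the same route as the paper: slide $f$ (or $g$) through the crossing using naturality of the braiding, then eliminate the resulting crossing with $\one$ via the compatibility $r_W\circ c_{\one,W}=\ell_W$ between the unit constraints and the braiding. The paper presents this pictorially with explicit $r_W$ and $\ell_W$ boxes and simply cites \cite[Prop.\ 2.1]{JS2} for the unit/braiding compatibility, whereas you sketch two possible justifications for it; otherwise the arguments coincide.
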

\begin{proof}
\[
\begin{tikzpicture}
[scale=.7,
big/.style={rectangle,rounded corners,fill=white,draw,minimum height=.6cm,minimum width=1.5cm},
small/.style={rectangle,rounded corners,fill=white,draw,minimum height=.6cm,minimum width=.6cm}]
\begin{scope}[shift={(0,0)}]
\draw[thick, rounded corners](1,.5)node[above]{$W$}--(1,0)--(0,-1)--(0,-2.5)node[below]{$W$};
\node[circle,fill=white] at (.5,-.5){};
\draw[thick, rounded corners](0,.5)node[above]{$V$}--(0,0)--(1,-1)--(1,-1.5);
\node[small] at (1,-1.5){$f$};
\node at (2.5,-.5) {$=$};
\end{scope}
\begin{scope}[shift={(4,0)}]
\draw[thick, rounded corners](1,.5)node[above]{$W$}--(1,0)--(0,-1)--(0,-3.5);
\node[circle,fill=white] at (.5,-.5){};
\draw[thick, rounded corners](0,.5)node[above]{$V$}--(0,0)--(1,-1)--(1,-3.5);
\draw[thick, rounded corners](.5,-3.5)--(.5,-4.5)node[below]{$W$};
\node[small] at (1,-1.5){$f$};
\node at (1.3,-2.5){$\one$};
\node[big] at (.5,-3.5){$r_W$};
\node at (2.5,-.5) {$=$};
\end{scope}
\begin{scope}[shift={(8,0)}]
\draw[thick, rounded corners](1,.5)node[above]{$W$}--(1,-1)--(0,-2)--(0,-3.5);
\node[circle,fill=white] at (.5,-1.5){};
\draw[thick, rounded corners](0,.5)node[above]{$V$}--(0,-1)--(1,-2)--(1,-3.5);
\draw[thick, rounded corners](.5,-3.5)--(.5,-4.5)node[below]{$W$};
\node[small] at (0,-.5){$f$};
\node at (1.3,-2.5){$\one$};
\node[big] at (.5,-3.5){$r_W$};
\node at (2.5,-.5) {$=$};
\end{scope}
\begin{scope}[shift={(12,0)}]
\draw[thick, rounded corners](1,.5)node[above]{$W$}--(1,-3.5);
\node[circle,fill=white] at (.5,-1.5){};
\draw[thick, rounded corners](0,.5)node[above]{$V$}--(0,-3.5);
\draw[thick, rounded corners](.5,-3.5)--(.5,-4.5)node[below]{$W$};
\node[small] at (0,-.5){$f$};
\node at (-.3,-2.5){$\one$};
\node[big] at (.5,-3.5){$\ell_W$};
\node at (2.5,-.5) {$=$};
\end{scope}
\begin{scope}[shift={(16,0)}]
\draw[thick, rounded corners](1,.5)node[above]{$W$}--(1,-2.5)node[below]{$W$};
\draw[thick, rounded corners](0,.5)node[above]{$V$}--(0,-.5);
\node[small] at (0,-.5){$f$};
\end{scope}
\end{tikzpicture}
\]
Here the first and last equality come from interpreting compositions involving the box $f$ with no output ($=$ morphism with range $\one$). The second equality is the naturality of the braiding isomorphism, and the third equality is a compatibility between the unit constraints $r_W$, $\ell_W$ and the braiding isomorphism which is a consequence of the relations \eqref{eq:relations} \cite[Prop.\ 2.1]{JS2}.

This proves the first equality; the proofs of the other equalities are analogous. 
\end{proof}

\begin{defn}\cite[Def.\ 6.1]{JS2}\label{def:balanced} A {\em balanced monoidal category} is a braided monoidal category $\sC$ together with a natural family of isomorphisms $\theta_X\colon X\overset \cong\to X$, called {\em twists}, parametrized by the objects $X\in\sC$. We note that the naturality means that for any morphism $g\colon X\to Y$ we have $g\circ \theta_X=\theta_Y\circ g$. In addition it is required that $\theta_X=\id_X$ for $X=\one$, and for any objects $X,Y\in\sC$ the twist $\theta_{X\otimes Y}$ is determined by the following equation:

\begin{equation}\label{eq:twist}
\begin{tikzpicture}
[scale=.9,
big/.style={rectangle,rounded corners,fill=white,draw,minimum height=.5cm,minimum width=1cm},
small/.style={rectangle,rounded corners,fill=white,draw,minimum height=.5cm,minimum width=.5cm}]
\begin{scope}[shift={(0,0)}]
\draw[thick, rounded corners](-.5,2)node[above]{$X$}--(-.5,-2)node[below]{$X$};
\draw[thick, rounded corners](.5,2)node[above]{$Y$}--(.5,-2)node[below]{$Y$};
\node[big] at (0,0){$\theta_{X\otimes Y}$};
\end{scope}
\node at (2,0){$=$};
\begin{scope}[shift={(4,0)}]
\draw[thick, rounded corners](.5,2)node[above]{$Y$}--(.5,1.5)--(-.5,.5)--(-.5,0);
\node[circle,fill=white] at (0,1){} ;
\draw[thick, rounded corners](-.5,2)node[above]{$X$}--(-.5,1.5)--(.5,.5)--(.5,0);
\draw[thick, rounded corners](.5,0)--(.5,-.5)--(-.5,-1.5)--(-.5,-2)node[below]{$X$};
\node[circle,fill=white] at (0,-1){} ;
\draw[thick, rounded corners](-.5,0)--(-.5,-.5)--(.5,-1.5)--(.5,-2)node[below]{$Y$};
\node[small] at (-.5,0){$\theta_Y$};
\node[small] at (.5,0){$\theta_X$};
\end{scope}
\end{tikzpicture}
\end{equation}

If $\theta_X=\id_X$ for {\em all objects} $X\in \sC$, this equality reduces to the requirement $c_{Y,X}\circ c_{X,Y}=\id_{X\otimes Y}$ for the braiding isomorphism. This is the additional requirement in a {\em symmetric monoidal category}, so symmetric monoidal categories can be thought of as balanced  monoidal categories with $\theta_X=\id_X$ for all objects $X$. 

For a balanced monoidal category $\sC$ with braiding isomorphism $c$ and twist $\theta$, we define the switching isomorphism $s=s_{X,Y}\colon X\otimes Y\to Y\otimes X$ by $s_{X,Y}:=(\id_Y\otimes\theta_X)\circ c_{X,Y}$; pictorially:
\begin{equation}\label{eq:switching}
\begin{tikzpicture}
[scale=.9,
big/.style={rectangle,rounded corners,fill=white,draw,minimum height=.5cm,minimum width=1cm},
small/.style={rectangle,rounded corners,fill=white,draw,minimum height=.5cm,minimum width=.5cm}]
\begin{scope}[shift={(0,0)}]
\draw[thick, rounded corners](-.5,1.5)node[above]{$X$}--(-.5,.5)--(.5,-.5)--(.5,-1.5)node[below]{$X$};
\draw[thick, rounded corners](.5,1.5)node[above]{$Y$}--(.5,.5)--(-.5,-.5)--(-.5,-1.5)node[below]{$Y$};
\end{scope}
\node at (2,0){$:=$};
\begin{scope}[shift={(4,-.5)}]
\draw[thick, rounded corners](.5,2)node[above]{$Y$}--(.5,1.5)--(-.5,.5)--(-.5,0)--(-.5,-1)node[below]{$Y$};
\node[circle,fill=white] at (0,1){} ;
\draw[thick, rounded corners](-.5,2)node[above]{$X$}--(-.5,1.5)--(.5,.5)--(.5,0)--(.5,-1)node[below]{$X$};
\node[small] at (.5,0){$\theta_X$};
\end{scope}
\end{tikzpicture}
\end{equation}
\end{defn}
\subsection{Multiplicativity of the trace pairing}\label{subsec:multiplicativity}
In this subsection we will prove the multiplicativity of $\wh\tr$  and derive from it the multiplicativity of the trace pairing (part (3) of Theorem \ref{thm:main2}). Our first task is to define a tensor product of thickened morphisms.

\begin{defn}\label{def:wh_tensor} Let $\wh f_i=[Z_i,t_i,b_i]\in \wh\sC(X_i,Y_i)$ for $i=1,2$. If $\sC$ is a braided monoidal category, we define 
\[
\wh f_1\otimes\wh f_2:=[Z,t,b]\in \wh\sC(X_1\otimes X_2,Y_1\otimes Y_2)
\]
where $Z=Z_1\otimes Z_2$, and 

\[
\begin{tikzpicture}
[scale=.7,
big/.style={rectangle,rounded corners,fill=white,draw,minimum height=.3cm,minimum width=1cm},
small/.style={rectangle,rounded corners,fill=white,draw,minimum height=.3cm,minimum width=.3cm}]
\begin{scope}[shift={(0,0)}]
\node at (-1,-1.5){$t=$};
\begin{scope}[shift={(0,0)}]
\draw[thick](0,-.5)--+(0,-.5);
\draw[thick](1,-.5)--+(0,-.5);
\node at (.5,-.5)[big]{$t_1$};
\end{scope}
\begin{scope}[shift={(2,0)}]
\draw[thick](0,-.5)--+(0,-.5);
\draw[thick](1,-.5)--+(0,-.5);
\node at (.5,-.5)[big]{$t_2$};
\end{scope}
\begin{scope}[shift={(1,-1)}]
	\draw[thick] (1,0).. controls +(down:.5cm) and +(up:.5cm)..+(-1,-1)node[below]{$Y_2$};
		\node[circle,fill=white] at (.5,-.5){} ;
	\draw[thick] (0,0).. controls +(down:.5cm) and +(up:.5cm)..+(+1,-1)node[below]{$Z_1$};
\end{scope}
\draw[thick](0,-1)--+(0,-1)node[below]{$Y_1$};
\draw[thick](3,-1)--+(0,-1)node[below]{$Z_2$};
\end{scope}
\begin{scope}[shift={(6,-.8)}]
\node at (-1,-.5){$b=$};
\draw[thick](0,0)node[above]{$Z_1$}--+(0,-1);
\draw[thick](3,0)node[above]{$X_2$}--+(0,-1);
\begin{scope}[shift={(1,0)}]
	\draw[thick] (0,0)node[above]{$Z_2$}.. controls +(down:.5cm) and +(up:.5cm)..+(+1,-1);
			\node[circle,fill=white] at (.5,-.5){} ;
		\draw[thick] (1,0)node[above]{$X_1$}.. controls +(down:.5cm) and +(up:.5cm)..+(-1,-1);
\end{scope}
\begin{scope}[shift={(0,-1)}]
\draw[thick](0,0)--(0,-.5);
\draw[thick](1,0)--(1,-.5);
\node at (.5,-.5)[big]{$b_1$};
\end{scope}
\begin{scope}[shift={(2,-1)}]
\draw[thick](0,0)--(0,-.5);
\draw[thick](1,0)--(1,-.5);
\node at (.5,-.5)[big]{$b_2$};
\end{scope}
\end{scope}
\end{tikzpicture}
\]

\end{defn}
We will leave it to the reader to check that this tensor product is well-defined. A crucial property of this tensor product for thickened morphisms is its compatibility with the tensor product for morphisms:
\begin{lem}\label{lem:Psi_multiplicative} The map $\Psi\colon \wh\sC(X,Y)\to \sC(X,Y)$ is multiplicative, \ie, $\Psi(\wh f_1\otimes \wh f_2)=\Psi(\wh f_1)\otimes \Psi (\wh f_2)$. 
\end{lem}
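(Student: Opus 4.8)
The plan is to expand both sides as morphisms built from the structure maps $t_i,b_i$ together with the braiding, and then to cancel the two crossings that Definition~\ref{def:wh_tensor} introduces, using naturality of $s$ in the form of Lemma~\ref{lem:over/under_crossing}.

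Write $\wh f_i=[Z_i,t_i,b_i]$, so that $\wh f_1\otimes\wh f_2=[Z_1\otimes Z_2,t,b]$, with $t\colon\one\to(Y_1\otimes Y_2)\otimes(Z_1\otimes Z_2)$ and $b\colon(Z_1\otimes Z_2)\otimes(X_1\otimes X_2)\to\one$ the morphisms pictured in Definition~\ref{def:wh_tensor}. By the definition of $\Psi$ (see \eqref{eq:factorization} and Lemma~\ref{lem:Psi_welldefined}), the morphism $\Psi(\wh f_1\otimes\wh f_2)\colon X_1\otimes X_2\to Y_1\otimes Y_2$ is the composite $X_1\otimes X_2\xrightarrow{\,t\otimes\id\,}(Y_1\otimes Y_2)\otimes(Z_1\otimes Z_2)\otimes(X_1\otimes X_2)\xrightarrow{\,\id\otimes b\,}Y_1\otimes Y_2$, which uses only the monoidal structure of $\sC$. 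Substituting the pictures for $t$ and $b$, this composite is represented by the string diagram in which the boxes $t_1,t_2$ create the strands $Y_1,Z_1$ and $Y_2,Z_2$; one copy of the braiding carries $Z_1$ past $Y_2$; the inputs $X_1,X_2$ appear; a second copy of the braiding carries $Z_2$ past $X_1$; and finally $b_1$ absorbs $(Z_1,X_1)$ and $b_2$ absorbs $(Z_2,X_2)$, leaving $Y_1\otimes Y_2$. Erasing the two crossings and disregarding exactly where the boxes are attached, this is the side-by-side juxtaposition of the diagram for $\Psi(\wh f_1)$ (namely $X_1\xrightarrow{t_1\otimes\id}Y_1\otimes Z_1\otimes X_1\xrightarrow{\id\otimes b_1}Y_1$) with the diagram for $\Psi(\wh f_2)$ --- that is, of $\Psi(\wh f_1)\otimes\Psi(\wh f_2)$.

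It therefore remains to remove the two crossings. Each of them crosses a strand with a strand that has an endpoint at one of the caps $t_i$ or cups $b_i$, maps whose source or target is the monoidal unit $\one$. Using naturality of $s$ one first slides the crossing along such a strand until it is adjacent to the $t_i$ (resp.\ $b_i$) in question, and by a further isotopy (again legitimized by naturality of $s$) brings the remaining factor of that structure map alongside, so that the crossing becomes one past the full object $Y_2\otimes Z_2$ (resp.\ $Z_1\otimes X_1$). In that position Lemma~\ref{lem:over/under_crossing} applies --- it is the diagrammatic record of the identities $s_{\one,W}=\id_W=s_{W,\one}$, valid in any braided monoidal category --- and replaces the crossing by the crossingless diagram in which $t_2$ (resp.\ $b_1$) is attached on the other side of the crossed strand. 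Carrying this out for both crossings turns the diagram for $\Psi(\wh f_1\otimes\wh f_2)$ into the juxtaposition of the diagrams for $\Psi(\wh f_1)$ and $\Psi(\wh f_2)$, which is $\Psi(\wh f_1)\otimes\Psi(\wh f_2)$. Since only naturality of $s$ and $s_{\one,-}=\id$ are used, the braided hypothesis on $\sC$ suffices and no twist is needed, consistently with the fact that $\wh f_1\otimes\wh f_2$ itself is only defined for braided $\sC$.

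The only genuinely delicate step is the bookkeeping in the last paragraph: the braidings supplied by Definition~\ref{def:wh_tensor} move a strand past just one of the two factors of a $t_i$-output (resp.\ a $b_i$-input), whereas Lemma~\ref{lem:over/under_crossing} removes a crossing past a complete output or input, so the preliminary isotopies that repair this mismatch must be spelled out carefully. They are routine --- each is a planar rearrangement that never interchanges two crossings --- but keeping track of them is where the real work lies. (Alternatively one can run the same argument in the coend language of \eqref{eq:coend}, where $\Psi$ is induced by the monoidal product and unit constraints and the braidings enter only through the coend variables.)
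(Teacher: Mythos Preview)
Your overall strategy matches the paper's: expand $\Psi(\wh f_1\otimes\wh f_2)$ using Definition~\ref{def:wh_tensor}, then eliminate the two braidings $c_{Z_1,Y_2}$ and $c^{-1}_{X_1,Z_2}$ via Lemma~\ref{lem:over/under_crossing}. You also correctly spot the obstruction: each of those braidings moves a strand past only \emph{one} of the two outputs of $t_2$ (respectively inputs of $b_1$), whereas the lemma needs the crossing to be past the full object $Y_2\otimes Z_2$ (respectively $Z_1\otimes X_1$).

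The gap is in how you claim to repair this mismatch. You say that ``a further isotopy (again legitimized by naturality of $s$)'' will ``bring the remaining factor alongside'' and that these are ``planar rearrangements that never interchange two crossings.'' But naturality and the interchange law cannot change \emph{which} pairs of strands cross: if $Z_1$ crosses $Y_2$ and not $Z_2$, no planar isotopy will make $Z_1$ cross $Y_2\otimes Z_2$. So Lemma~\ref{lem:over/under_crossing} is never directly applicable to either of the two original crossings in the way you describe, and the proof as written does not go through.

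The paper closes this gap by a maneuver you do not mention: it first uses Lemma~\ref{lem:over/under_crossing} in the \emph{opposite} direction, with $g=t_2$ and $V=X_1$, to slide $X_1$ past $t_2$, thereby \emph{inserting} a crossing of $X_1$ over the full $Y_2\otimes Z_2$. The inserted crossing of $X_1$ with $Z_2$ then cancels the original crossing $c^{-1}_{X_1,Z_2}$ from $b$ by a Reidemeister~II move (this is where the paper invokes Proposition~\ref{prop:isotopy}, though $c^{-1}\circ c=\id$ already suffices here). What remains is the original crossing of $Z_1$ over $Y_2$ \emph{together with} the inserted crossing of $X_1$ over $Y_2$; by the hexagon relation these combine into a single crossing of $Z_1\otimes X_1$ over $Y_2$, which is now of the form required by Lemma~\ref{lem:over/under_crossing} with $f=b_1$ and $V=Y_2$, and can be removed. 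The essential trick---use the lemma to insert crossings so that they cancel or combine with the existing ones---is what your ``preliminary isotopies'' were standing in for, and it is not a planar move.
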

\begin{proof}
\[
\begin{tikzpicture}
[scale=.7,
big/.style={rectangle,rounded corners,fill=white,draw,minimum height=.3cm,minimum width=1cm},
small/.style={rectangle,rounded corners,fill=white,draw,minimum height=.3cm,minimum width=.3cm}]
\begin{scope}[shift={(0,0)}];
\node at (-1.5,-1.25){$\Psi(\wh f_1\otimes\wh f_2)=\Psi([Z,t,b])=$};
\begin{scope}[shift={(3,0)}]
\begin{scope}[shift={(0,0)}]
\draw[thick](0,-.5)--+(0,-.5);
\draw[thick](1,-.5)--+(0,-.5);
\node at (.5,-.5)[big]{$t$};
\end{scope}
\begin{scope}[shift={(1,-1.5)}]
\draw[thick](0,0)--(0,-.5);
\draw[thick](1,0)--(1,-.5);
\node at (.5,-.5)[big]{$b$};
\end{scope}
\node at (1,-1.25){$Z$};
\draw[thick](0,-1)--+(0,-1.5)node[below]{$Y$};
\draw[thick](2,0)node[above]{$X$}--+(0,-1.5);
\end{scope}
\node at (6,-1.25){$=$};
\begin{scope}[shift={(7,0)}]
\begin{scope}[shift={(0,0)}]
\draw[thick](0,-.5)--+(0,-.5);
\draw[thick](1,-.5)--+(0,-.5);
\node at (.5,-.5)[big]{$t_1$};
\end{scope}
\begin{scope}[shift={(2,0)}]
\draw[thick](0,-.5)--+(0,-.5);
\draw[thick](1,-.5)--+(0,-.5);
\node at (.5,-.5)[big]{$t_2$};
\end{scope}
\begin{scope}[shift={(1,-1)}]
	\draw[thick] (1,0).. controls +(down:.5cm) and +(up:.5cm)..+(-1,-1);
		\node[circle,fill=white] at (.5,-.5){} ;
	\draw[thick] (0,0).. controls +(down:.5cm) and +(up:.5cm)..+(+1,-1);
\end{scope}
\begin{scope}[shift={(3,-2)}]
	\draw[thick] (0,0).. controls +(down:.5cm) and +(up:.5cm)..+(+1,-1);
			\node[circle,fill=white] at (.5,-.5){} ;
		\draw[thick] (1,0).. controls +(down:.5cm) and +(up:.5cm)..+(-1,-1);
\end{scope}
\begin{scope}[shift={(2,-3)}]
\draw[thick](0,0)--(0,-.5);
\draw[thick](1,0)--(1,-.5);
\node at (.5,-.5)[big]{$b_1$};
\end{scope}
\begin{scope}[shift={(4,-3)}]
\draw[thick](0,0)--(0,-.5);
\draw[thick](1,0)--(1,-.5);
\node at (.5,-.5)[big]{$b_2$};
\end{scope}
\draw[thick](0,-1)--+(0,-3)node[below]{$Y_1$};
\draw[thick](1,-2)--+(0,-2)node[below]{$Y_2$};
\draw[thick](2,-2)--+(0,-1);
\draw[thick](3,-1)--+(0,-1);
\draw[thick](4,0)node[above]{$X_1$}--+(0,-2);
\draw[thick](5,0)node[above]{$X_2$}--+(0,-3);
\end{scope}
\end{scope}
\end{tikzpicture}
\]
\[
\begin{tikzpicture}
[scale=.7,
big/.style={rectangle,rounded corners,fill=white,draw,minimum height=.3cm,minimum width=1cm},
small/.style={rectangle,rounded corners,fill=white,draw,minimum height=.3cm,minimum width=.3cm}]
\node at (-1,-2){$=$};
\begin{scope}[shift={(0,0)}]
\draw[thick,rounded corners](3,-.5)--(3,-1)--(1,-3)--(1,-5)node[below]{$Y_2$};
\node[fill=white] at (2.5,-1.5){};
\node[fill=white] at (1.5,-2.5){};
\draw[thick](0,-.5)--(0,-5)node[below]{$Y_1$};
\draw[thick,rounded corners](1,-.5)--(1,-2)--(2,-3)--(2,-4.5);
\draw[thick,rounded corners](4,-.5)--(4,-2)--(3,-3)--(4,-4)--(4,-4.5);
\node[fill=white] at (3.5,-2.5){};
\node[fill=white] at (3.5,-3.5){};
\draw[thick,rounded corners](2,0)node[above]{$X_1$}--(2,-1)--(4,-3)--(3,-4)--(3,-4.5);
\draw[thick,rounded corners](5,0)node[above]{$X_2$}--(5,-4.5);
\node at (.5,-.5)[big]{$t_1$};
\node at (3.5,-.5)[big]{$t_2$};
\node at (2.5,-4.5)[big]{$b_1$};
\node at (4.5,-4.5)[big]{$b_2$};
\node at (6,-2){$=$};
\end{scope}
\begin{scope}[shift={(7,0)}]
\draw[thick,rounded corners](3,-.5)--(3,-1)--(1,-3)--(1,-5)node[below]{$Y_2$};
\node[fill=white] at (2.5,-1.5){};
\node[fill=white] at (1.5,-2.5){};
\draw[thick](0,-.5)--(0,-5)node[below]{$Y_1$};
\draw[thick,rounded corners](1,-.5)--(1,-2)--(2,-3)--(2,-4.5);
\draw[thick,rounded corners](2,0)node[above]{$X_1$}--(2,-1)--(3,-2)--(3,-4.5);
\draw[thick,rounded corners](4,-.5)--(4,-4.5);
\draw[thick,rounded corners](5,0)node[above]{$X_2$}--(5,-4.5);
\node at (.5,-.5)[big]{$t_1$};
\node at (3.5,-.5)[big]{$t_2$};
\node at (2.5,-4.5)[big]{$b_1$};
\node at (4.5,-4.5)[big]{$b_2$};
\node at (6,-2){$=$};
\end{scope}
\end{tikzpicture}
\]
\[
\begin{tikzpicture}
[scale=.7,
big/.style={rectangle,rounded corners,fill=white,draw,minimum height=.3cm,minimum width=1cm},
small/.style={rectangle,rounded corners,fill=white,draw,minimum height=.3cm,minimum width=.3cm}]
\node at (-1,-2){$=$};
\begin{scope}[shift={(0,0)}]
\draw[thick](0,-.5)--(0,-5)node[below]{$Y_1$};
\draw[thick](1,-.5)--(1,-4.5);
\draw[thick](3,-1)--(3,-5)node[below]{$Y_2$};
\draw[thick](2,0)node[above]{$X_1$}--(2,-4.5);
\draw[thick](4,-.5)--(4,-4.5);
\draw[thick](5,0)node[above]{$X_2$}--(5,-4.5);
\node at (.5,-.5)[big]{$t_1$};
\node at (3.5,-.5)[big]{$t_2$};
\node at (1.5,-4.5)[big]{$b_1$};
\node at (4.5,-4.5)[big]{$b_2$};
\node at (6,-2){$=$};
\end{scope}
\node[right] at (6.5,-2){$\Psi([Z_1,t_1,b_1])\otimes \Psi([Z_2,t_2,b_2])=\Psi(\wh f_1)\otimes \Psi(\wh f_2)$};
\end{tikzpicture}
\]

Here the fourth equality is a consequence of Lemma \ref{lem:over/under_crossing} (applied to $g=t_2\colon \one\to W=Y_2\otimes Z_2$, $V=X_1$). The fifth equality follows from Proposition \ref{prop:isotopy} and the obvious isotopy. The sixth equality is  again a consequence of Lemma \ref{lem:over/under_crossing} (applied to $f=b_1\colon W=Z_1\otimes X_1\to \one$, $V=Y_2$).
\end{proof} 
\begin{proof}[Proof of part (3) of Theorem \ref{thm:whtr}]
Let $\wh f_i=[Z_i,t_i,b_i]\in \wh\sC(X_i,X_i)$, $i=1,2$.  Then 

\begin{equation}\label{eq:whtr_tensorproduct}
\begin{tikzpicture}
[scale=.7,
big/.style={rectangle,rounded corners,fill=white,draw,minimum height=.3cm,minimum width=1cm},
small/.style={rectangle,rounded corners,fill=white,draw,minimum height=.3cm,minimum width=.3cm}]
\node at (0,-2){$\wh\tr(\wh f_1\otimes \wh f_2)=\wh\tr(Z,t,b)=$};
\begin{scope}[shift={(4,0)}];
\draw[thick](0,-.5)--(0,-5);
\draw[thick](1,-.5)--(1,-5);
\node at (.5,-.5)[big]{$t$};
\node at (.5,-2)[big]{$c_{X,Z}$};
\node at (.5,-5)[big]{$b$};
\node at (1,-3.5)[small]{$\theta_X$};
\node at (-.3,-1.25){$X$} ;
\node at (1.3,-1.25){$Z$} ;
\node at (-.3,-2.75){$Z$} ;
\node at (1.3,-2.75){$X$} ;
\node at (1.3,-4.25){$X$} ;
\end{scope}
\end{tikzpicture}
\end{equation}
where $Z:=Z_1\otimes Z_2$, $X:=X_1\otimes X_2$, and $t$, $b$ are as in Definition \ref{def:wh_tensor}. We note that $\theta_X=\theta_{X_1\otimes X_2}$ can be expressed in terms of $\theta_1:=\theta_{X_1}$ and $\theta_2:=\theta_{X_2}$ as in Equation  \ref{eq:twist} and that the braiding isomorphism $c_{X,Z}$ can be expressed graphically as 
\[
\begin{tikzpicture}[scale=.5]
\node at (-5.5,-2.5){$c_{X,Z}=c_{X_1\otimes X_2,Z_1\otimes Z_2}=$};
\draw[thick](2,0)node[above]{$Z_1$}--(2,-1)--(0,-3)--(0,-5);
\draw[thick](3,0)node[above]{$Z_2$}--(3,-2)--(1,-4)--(1,-5);
\node[circle, fill=white] at (1.5,-1.5){};
\node[circle, fill=white] at  (2.5,-2.5){};
\node[circle, fill=white] at (.5,-2.5){};
\node[circle, fill=white] at (1.5,-3.5){};
\draw[thick](0,0)node[above]{$X_1$}--(0,-2)--(2,-4)--(2,-5);
\draw[thick](1,0)node[above]{$X_2$}--(1,-1)--(3,-3)--(3,-5);
\end{tikzpicture}
\]
It follows that
\[
\begin{tikzpicture}
[scale=.7,
big/.style={rectangle,rounded corners,fill=white,draw,minimum height=.3cm,minimum width=1cm},
small/.style={rectangle,rounded corners,fill=white,draw,minimum height=.3cm,minimum width=.3cm}]
\node at (-3,-5){$\wh\tr(\wh f_1\otimes \wh f_2)=$};
\begin{scope}[shift={(0,0)}]
\draw[thick,rounded corners](3,0)--(3,-3)--(1,-5)--(1,-8)--(2,-9)--(2,-10);
\node[circle,fill=white] at (2.5,-3.5){} ;
\node[circle,fill=white] at (1.5,-4.5){} ;
\node[circle,fill=white] at (1.5,-8.5){} ;
\draw[thick,rounded corners](1,0)--(1,-1)--(2,-2)--(0,-4)--(0,-10);
\node[circle,fill=white] at (1.5,-2.5){} ;
\node[circle,fill=white] at (.5,-3.5){} ;
\draw[thick,rounded corners](2,0)--(2,-1)--(1,-2)--(3,-4)--(3,-5)--(2,-6)--(2,-7)--(3,-8)--(3,-10);
\node[circle,fill=white] at (1.5,-1.5){} ;
\draw[thick](1.3,-1.3)--(1.7,-1.7);
\node[circle,fill=white] at (2.5,-5.5){} ;
\node at (2,-6.5)[small]{$\theta_{{2}}$};
\draw[thick,rounded corners](0,0)--(0,-3)--(3,-6)--(3,-7)--(1,-9)--(1,-10);
\node at (3,-6.5)[small]{$\theta_{{1}}$};
\node[circle,fill=white] at (2.5,-7.5){} ;
\draw[thick] (2.3,-7.3)--(2.7,-7.7);
\node at (.5,0)[big]{$t_1$};
\node at (2.5,0)[big]{$t_2$};
\node at (.5,-10)[big]{$b_1$};
\node at (2.5,-10)[big]{$b_2$};
\node at (.3,-.75){$X_1$} ;
\node at (1.3,-.75){$Z_1$} ;
\node at (2.3,-.75){$X_2$} ;
\node at (3.3,-.75){$Z_2$} ;
\node at (.3,-9.25){$Z_1$} ;
\node at (1.3,-9.25){$X_1$} ;
\node at (2.3,-9.25){$Z_2$} ;
\node at (3.3,-9.25){$X_2$} ;
\node at (4,-5){$=$};
\end{scope}
\begin{scope}[shift={(5,0)}]
\draw[thick,rounded corners](3,0)--(3,-3)--(1,-5)--(1,-7)--(2,-8)--(2,-10);
\node[circle,fill=white] at (2.5,-3.5){} ;
\node[circle,fill=white] at (1.5,-4.5){} ;
\node[circle,fill=white] at (1.5,-7.5){} ;
\draw[thick,rounded corners](1,0)--(1,-1)--(2,-2)--(0,-4)--(0,-10);
\node[circle,fill=white] at (1.5,-2.5){} ;
\node[circle,fill=white] at (.5,-3.5){} ;
\draw[thick,rounded corners](2,0)--(2,-1)--(1,-2)--(3,-4)--(3,-5)--(2,-6)--(3,-7)--(3,-10);
\node[circle,fill=white] at (1.5,-1.5){} ;
\draw[thick](1.3,-1.3)--(1.7,-1.7);
\node[circle,fill=white] at (2.5,-5.5){} ;
\draw[thick,rounded corners](0,0)--(0,-3)--(3,-6)--(1,-8)--(1,-10);
\node[circle,fill=white] at (2.5,-6.5){} ;
\draw[thick] (2.3,-6.3)--(2.7,-6.7);
\node at (1,-8.5)[small]{$\theta_{{1}}$};
\node at (3,-8.5)[small]{$\theta_{{2}}$};
\node at (.5,0)[big]{$t_1$};
\node at (2.5,0)[big]{$t_2$};
\node at (.5,-10)[big]{$b_1$};
\node at (2.5,-10)[big]{$b_2$};
\node at (4,-5){$=$};
\end{scope}
\begin{scope}[shift={(10,0)}]
\draw[thick,rounded corners](3,0)--(3,-1)--(4,-2)--(4,-6)--(2,-8)--(2,-10);
\node[circle,fill=white] at (3,-7){} ;
\draw[thick,rounded corners](1,0)--(1,-1)--(2,-2)--(0,-4)--(0,-10);
\node[circle,fill=white] at (1.5,-2.5){} ;
\node[circle,fill=white] at (.5,-3.5){} ;
\draw[thick,rounded corners](2,0)--(2,-1)--(1,-2)--(3,-4)--(3,-5)--(2,-6)--(3,-7)--(3,-10);
\node[circle,fill=white] at (1.5,-1.5){} ;
\draw[thick](1.3,-1.3)--(1.7,-1.7);
\node[circle,fill=white] at (2.5,-5.5){} ;
\draw[thick,rounded corners](0,0)--(0,-3)--(3,-6)--(1,-8)--(1,-10);
\node[circle,fill=white] at (2.5,-6.5){} ;
\draw[thick] (2.3,-6.3)--(2.7,-6.7);
\node at (1,-8.5)[small]{$\theta_{{1}}$};
\node at (3,-8.5)[small]{$\theta_{{2}}$};
\node at (.5,0)[big]{$t_1$};
\node at (2.5,0)[big]{$t_2$};
\node at (.5,-10)[big]{$b_1$};
\node at (2.5,-10)[big]{$b_2$};
\node at (5,-5){$=$};
\end{scope}
\end{tikzpicture}
\]
\[
\begin{tikzpicture}
[scale=.7,
big/.style={rectangle,rounded corners,fill=white,draw,minimum height=.3cm,minimum width=1cm},
small/.style={rectangle,rounded corners,fill=white,draw,minimum height=.3cm,minimum width=.3cm}]
\begin{scope}[shift={(0,0)}]
\draw[thick,rounded corners](3,0)--(3,-1)--(4,-2)--(4,-5)--(2,-7)--(2,-9);
\node[circle,fill=white] at (3,-6){} ;
\draw[thick,rounded corners](1,0)--(1,-1)--(2,-2)--(0,-4)--(0,-9);
\node[circle,fill=white] at (1.5,-2.5){} ;
\node[circle,fill=white] at (.5,-3.5){} ;
\draw[thick,rounded corners](2,0)--(2,-1)--(0,-3)--(2,-5)--(3,-6)--(3,-9);
\node[circle,fill=white] at (1.5,-1.5){} ;
\draw[thick](1.3,-1.3)--(1.7,-1.7);
\node[circle,fill=white] at (1,-2){} ;
\draw[thick,rounded corners](0,0)--(0,-1)--(3,-4)--(3,-5)--(1,-7)--(1,-9);
\node[circle,fill=white] at (2.5,-5.5){} ;
\draw[thick] (2.3,-5.3)--(2.7,-5.7);
\node at (1,-7.5)[small]{$\theta_{{1}}$};
\node at (3,-7.5)[small]{$\theta_{{2}}$};
\node at (.5,0)[big]{$t_1$};
\node at (2.5,0)[big]{$t_2$};
\node at (.5,-9)[big]{$b_1$};
\node at (2.5,-9)[big]{$b_2$};
\node at (5,-5){$=$};
\end{scope}

\begin{scope}[shift={(6,0)}]
\draw[thick,rounded corners](3,0)--(3,-1)--(4,-2)--(4,-5)--(2,-7)--(2,-9);
\node[circle,fill=white] at (3,-6){} ;
\draw[thick,rounded corners](1,0)--(1,-1)--(2,-2)--(0,-4)--(0,-9);
\node[circle,fill=white] at (1.5,-2.5){} ;
\node[circle,fill=white] at (.5,-3.5){} ;
\node[circle,fill=white] at (1.5,-1.5){} ;
\draw[thick,rounded corners](2,0)--(2,-1)--(0,-3)--(3,-6)--(3,-9);
\draw[thick,rounded corners](0,0)--(0,-1)--(3,-4)--(3,-5)--(1,-7)--(1,-9);
\node[circle,fill=white] at (1,-2){} ;
\draw[thick](1.2,-1.8)--(.8,-2.2);
\node[circle,fill=white] at (2.5,-5.5){} ;
\draw[thick] (2.3,-5.3)--(2.7,-5.7);
\node at (1,-7.5)[small]{$\theta_{{1}}$};
\node at (3,-7.5)[small]{$\theta_{{2}}$};
\node at (.5,0)[big]{$t_1$};
\node at (2.5,0)[big]{$t_2$};
\node at (.5,-9)[big]{$b_1$};
\node at (2.5,-9)[big]{$b_2$};
\node at (5,-5){$=$};
\end{scope}
\begin{scope}[shift={(12,0)}]
\draw[thick,rounded corners](1,0)--(1,-3)--(0,-4)--(0,-9);
\node[circle,fill=white] at (.5,-3.5){} ;
\draw[thick,rounded corners](0,0)--(0,-3)--(1,-4)--(1,-9);
\draw[thick,rounded corners](3,0)--(3,-5.5)--(2,-6.5)--(2,-9);
\node[circle,fill=white] at (2.5,-6){} ;
\draw[thick,rounded corners](2,0)--(2,-5.5)--(3,-6.5)--(3,-9);
\node at (1,-7.5)[small]{$\theta_{{1}}$};
\node at (3,-7.5)[small]{$\theta_{{2}}$};
\node at (.5,0)[big]{$t_1$};
\node at (2.5,0)[big]{$t_2$};
\node at (.5,-9)[big]{$b_1$};
\node at (2.5,-9)[big]{$b_2$};
\end{scope}
\node at (18,-5){$=\wh\tr(\wh f_1)\otimes\wh\tr(\wh f_2)$};
\end{tikzpicture}
\]
Here the first equality is obtained from equation \eqref{eq:whtr_tensorproduct} by combining the pictures for the morphisms $t$, $c_{X,Z}$, $\theta_X$ and $b$ that $\wh\tr(\wh f_1\otimes \wh f_2)$ is a composition of. 
The second equality follows from the naturality of the braiding isomorphism (see picture \eqref{eq:s_naturality}).
The third equality is a consequence of Proposition \ref{prop:isotopy} and an isotopy moving the strand with label $Z_2$ to the right.
The fourth equality is a consequence of an isotopy moving the strand with label $X_2$ left and down.
The fifth equality follows from Lemma \ref{lem:over/under_crossing} with $g=t_1$ and the relations \eqref{eq:relations}.
The sixth equality is a consequence of Proposition \ref{prop:isotopy} and an isotopy which moves the strands labeled $X_1$ and $Z_1$ to the left, and the strand labeled $X_2$ to the right.

\end{proof}
\begin{proof}[Proof of part (3) of Theorem \ref{thm:main2}]
Let $f_i\in \sC^{tk}(X_i,Y_i)$, $g_i\in \sC^{tk}(Y_i,X_i)$ for $i=1,2$, and let $\wh f_i\in \wh\sC(X_i,Y_i)$ with $\Psi(\wh f_i)=f_i$. Then according to Lemma \ref{lem:Psi_multiplicative}, the map
\[
\Psi\colon \wh\sC(X_1\otimes X_2,Y_1\otimes Y_2)\ra \sC(X_1\otimes X_2,Y_1\otimes Y_2)
\]
sends $\wh f_1\otimes \wh f_2$ to $f_1\otimes f_2$ and hence
\begin{align*}
\tr(f_1\otimes f_2,g_1\otimes g_2)
&=\wh\tr\left((\wh f_1\otimes \wh f_2)\circ( g_1\otimes  g_2)\right)
=\wh\tr\left((\wh f_1\circ g_1)\otimes (\wh f_2\circ g_2)\right)\\
&=\wh \tr(\wh f_1\circ g_1)\wh \tr(\wh f_2\circ g_2)
=\tr(f_1,g_1)\tr(f_2,g_2)
\end{align*}
Here the third equality uses the multiplicative property of $\wh\tr$, i.e., statement (3) of Theorem \ref{thm:whtr}.
\end{proof}

\end{document}